\documentclass[11pt]{amsart}%12
\usepackage{fullpage}
\textheight=9.2truein
\textwidth=6.6truein
\mathsurround=1pt
\parindent6mm
\usepackage{graphicx}
\usepackage{amsfonts}
\usepackage{amsthm}
\usepackage{amsmath}
\usepackage{amssymb}
\usepackage{hyperref}
\usepackage[arrow,matrix,curve,color]{xy}
%\usepackage{enumerate}
%%BB:
\usepackage{pb-diagram,pb-xy}
\usepackage{hyperref}   % must be loaded before enumitem
\usepackage{enumitem}
\usepackage[normalem]{ulem}
\usepackage{color}
\usepackage[normalem]{ulem}
\usepackage{tikz}
\usepackage{xspace}
\definecolor{light-blue}{rgb}{0.8,0.85,1}
\definecolor{light-red}{rgb}{1,.4,.4}
\definecolor{purp}{rgb}{.7,.3,1}
\definecolor{yel}{rgb}{1,1,.5}
\definecolor{cy}{rgb}{0,1,1}
\usepackage{colortbl}
\usepackage{multirow}
\usepackage{array}

\theoremstyle{plain}
\newtheorem{theorem}{Theorem}[section]
\newtheorem{corollary}[theorem]{Corollary}
\newtheorem{lemma}[theorem]{Lemma}
\newtheorem{proposition}[theorem]{Proposition}
\newtheorem*{thmA}{Theorem A}
\newtheorem*{thmB}{Theorem B}

\theoremstyle{definition}
\newtheorem{remark}[theorem]{Remark}
\newtheorem{definition}[theorem]{Definition}
\newtheorem{example}[theorem]{Example}

%BB:

\newcommand{\CP}{\mathbb{CP}}
\newcommand{\HP}{\mathbb{HP}}
\newcommand{\p}{\partial}
\newcommand{\fb}{\mbox{-}\mathrm{fb}}
\newcommand{\spin}{\mathsf{spin}}
\newcommand{\spinc}{\mathsf{spin}^c}
\newcommand{\spinpsc}{\mathsf{spin, psc}}
\newcommand{\MSpin}{\mathsf{MSpin}}
\newcommand{\MSpinc}{\mathsf{MSpin}^c}
\newcommand{\MSO}{\mathsf{MSO}}

\newcommand{\KO}{\mathsf{KO}}
\newcommand{\K}{\mathsf{K}}
\newcommand{\ko}{\mathsf{ko}}
\newcommand{\KU}{\mathsf{KU}}
\newcommand{\ku}{\mathsf{ku}}

\newcommand{\sH}{\mathsf{H}}
%%%
\newcommand{\psc}{positive scalar curvature}
\newcommand{\co}{\colon\,}

\newcommand{\bR}{\mathbb R}
\newcommand{\bC}{\mathbb C}

\newcommand{\bF}{\mathbb F}
\newcommand{\bH}{\mathbb H}

\newcommand{\bZ}{\mathbb Z}

\newcommand{\bP}{\mathbb P}

\newcommand{\bS}{\mathbb S}
\newcommand{\cA}{\mathcal A}

\newcommand{\cL}{\mathcal L}

\newcommand{\cO}{\mathcal O}

\newcommand{\cR}{\mathcal R}

\newcommand{\SO}{\mathop{\rm SO}}
\newcommand{\SU}{\mathop{\rm SU}}
\newcommand{\U}{\mathop{\rm U}}

\newcommand{\Sp}{\mathop{\rm Sp}}

\newcommand{\PSp}{\mathop{\rm PSp}}

\newcommand{\tN}{\widetilde N}

\newcommand{\tf}{\widetilde f}

\newcommand{\pt}{\text{\textup{pt}}}
\newcommand{\lp}{\textup{(}}
\newcommand{\rp}{\textup{)}}

\newcommand{\Ca}{$C^*$-algebra}
\newcommand{\ind}{\operatorname{ind}}

\newcommand{\Hom}{\operatorname{Hom}}

\newcommand{\Sq}{\operatorname{Sq}}

\newcommand{\image}{\operatorname{image}}
\newcommand{\assemb}{\mathsf{assemb}}

\newcommand{\Dirac}{\partial\!\!\!/}

\newcommand{\spS}{{{\spin,\, S^1\fb}}}
\newcommand{\spZk}{{{\spin,\, \bZ/k\fb}}}
\newcommand{\Zk}{{{\bZ/k\fb}}}
\newcommand{\bord}{\rightsquigarrow} 
\setcounter{secnumdepth}{5}
\linespread{1.3}%{05}

\title[Manifolds with fibered singularities]{Positive scalar curvature on
  manifolds\\ with fibered singularities}
\author{Boris Botvinnik}
\address{Department of Mathematics\\
University of Oregon\\
Eugene OR 97403-1222, USA} 
\email[Boris Botvinnik]{botvinn@uoregon.edu}
\thanks{BB was partially supported by Simons collaboration grant 708183}
\urladdr{http://pages.uoregon.edu/botvinn/}
\author{Jonathan Rosenberg}
\address{Department of Mathematics\\
University of Maryland\\
College Park, MD 20742-4015, USA} 
\email[Jonathan Rosenberg]{jmr@math.umd.edu}
\urladdr{http://www2.math.umd.edu/\raisebox{-3pt}{~}jmr/}
\thanks{JR Partially supported by {U.S.} NSF grant number DMS-1607162.}
  
%%% BEGIN DOCUMENT
\begin{document}
\begin{abstract}
A {\lp}compact{\rp}
manifold with fibered $P$-singularities is a {\lp}possibly{\rp}
singular pseudomanifold $M_\Sigma$ with two strata: an open nonsingular stratum
$\mathring M$ {\lp}a smooth open manifold{\rp}
and a closed stratum $\beta M$
{\lp}a closed manifold of positive codimension{\rp}, such that a tubular
neighborhood of $\beta M$ is a fiber bundle with fibers each looking like
the cone on a fixed closed manifold $P$. We discuss what it means for
such an $M_{\Sigma}$ with fibered $P$-singularities to
admit an appropriate Riemannian metric of positive scalar
curvature, and we give necessary and sufficient conditions
{\lp}the necessary conditions
based on suitable versions of index theory, the sufficient
conditions based on surgery methods and homotopy theory{\rp}
for this to happen when the singularity type $P$ is either $\bZ/k$ or
$S^1$, and $M$ and the boundary of the tubular neighborhood of
the singular stratum are simply connected and carry spin structures. 
Along the way, we prove some results of perhaps independent interest,
concerning metrics on spin$^c$ manifolds with positive
``twisted scalar curvature,'' where the twisting comes from the
curvature of the spin$^c$ line bundle.
\end{abstract}
\keywords{positive scalar curvature,
  pseudomanifold, singularity, bordism, transfer, $K$-theory, index}
\subjclass[2010]{Primary 53C21; Secondary 58J22, 53C27, 19L41, 55N22}

\maketitle

\section{Introduction}
\label{sec:intro}
\subsection{Manifolds with fibered singularities}
In the paper \cite{MR1857524}, one of us studied the problem of when a
spin manifold with Baas-type singularities admits a metric of
\psc. This was done when the singularities are a combination of the
types $\bZ/2$, $\eta =$ a circle $S^1$ equipped with
the non-bounding spin
structure, and the Bott manifold of dimension $8$
(a geometric generator of Bott periodicity in $KO$-homology). We will
use the abbreviation \emph{psc-metric} for \emph{metric of \psc}.
\vspace{2mm}

This paper considers a similar problem of existence of positive scalar
curvature metrics for spin manifolds with \emph{fibered
  $P$-singularities}, where $P$ is a closed manifold. We take $P$ to
be a compact Lie group, either a cyclic group $\bZ/k$ for some $k$, or
$S^1$.  In a sequel paper \cite{BJP}, we study the case where $P$
is a compact semisimple Lie group, such as $\SO(3)$ or $\SU(2)$.
Another sequel \cite{BJ1} goes into more detail about the cases
$P=\bZ/2$ and $P=S^1$.
\vspace{2mm}

In more detail, let $P$ be a closed manifold, and $M$ be a
compact manifold with boundary $\p M$. We assume that the boundary
$\p M$ is the total space of a smooth fiber bundle $p\co  \p M \to \beta M$
with the fiber $P$. We denote by $N (\beta M)$ the total space of
the associated fiber bundle $\p M\times_P C(P) \to \beta M$, where the
manifold $P$ is replaced by the cone $C(P)$ fiberwise.
The notation is a reminder that this will be a tubular neighborhood
of the stratum $\beta M$.
The associated singular space $M_\Sigma:= M\cup_{\p M} - N(\beta M)$
has a singular stratum $\beta M$ whose normal bundle has fibers
homeomorphic to the cone on $P$, but where the normal bundle is
not necessarily a trivial bundle.
%\vspace{2mm}
%
%
We call $M_{\Sigma}$ a \emph{manifold with fibered
  $P$-singularity}. Clearly the original manifold $M$ with the given
smooth fibration $p\co \p M \to \beta M$ uniquely determines the singular
space $M_{\Sigma}$.  It's traditional to call $\beta M$ the \emph{Bockstein}
and $P$ the \emph{link}.

\vspace{2mm}

The situation we study fits nicely into a more general framework.  The
compact metrizable space $M_{\Sigma}$ is taken to be a Thom-Mather
stratified space (see \cite{MR2958928}) of depth one.  That means that
$M_{\Sigma}$ is the union of a smooth closed manifold $\beta M$ (the
singular stratum) and an open smooth manifold (the regular stratum)
$M_\Sigma^{\text{reg}}=M_\Sigma\smallsetminus \beta M$.  There is an
open neighborhood $N$ of $\beta M$ in $M_\Sigma$, with a continuous
retraction $\pi\co N \to \beta M$ and a continuous map
$\rho\co N\to [0, \infty)$ such that $\rho^{-1}(0) = \beta M$,
where $\pi\co N\to \beta M$ is a fiber bundle over
$\beta M$ with fiber $C(P)$, the cone
over $P$, as above.  The original manifold $M$, called a
\emph{resolution of} $M_{\Sigma}$, can be identified with $M_\Sigma
\smallsetminus \rho^{-1}([0, 1))$, so that the boundary $\p M =
\rho^{-1}(1)$ is the total space of a fibration over $\beta M$ with
typical fiber $P$. Clearly the interior of $M$ can be identified with
the regular stratum $M_\Sigma^{\text{reg}}$. Conversely,
given a compact manifold $M$ with fibered boundary
$P\to\p M\to\beta M$, we obtain a Thom-Mather stratified
space with two strata by collapsing the fibers.

%
%    
%  We can associate to $M_\Sigma$
%  its \emph{resolution}, namely the manifold $M:=M_\Sigma
%  \smallsetminus \rho^{-1}([0, 1))$ with boundary $\p M =
%    \rho^{-1}(1)$. Clearly this boundary is the total space of a
%    regular fibration $P\to\p M\to\beta M$ with typical fiber $P$. The
%    interior of $M$ can be identified with $M_\Sigma^{\text{reg}}$.
%    Conversely, given a compact manifold $M$ with fibered boundary $P
%    \to\p M\to\beta M$, we obtain a Thom-Mather stratified space with
%    two strata by collapsing the fibers.
\subsection{Riemannian metrics on manifolds with fibered singularities}
There are (at least) two natural definitions of a Riemannian
metric on such a singular manifold $M_\Sigma$.  The first
possibility, which one can call a \emph{cylindrical metric},
the definition used in \cite{MR1857524}, is a Riemannian
metric in the usual sense on the nonsingular manifold $M$, which
is a product metric $dr^2 + g_{\p M}$ on a collar neighborhood on
the boundary $\p M$, such that the compact Lie group $P$ acts freely by
isometries of the boundary and the map $\partial
M\xrightarrow{p}\beta M$ is a Riemannian submersion.  The
curvature of such a metric is defined as usual on the
(nonsingular) manifold $M$.  Such a metric also determines a
Riemannian metric on the ``Bockstein'' manifold $\beta M$.

\vspace{2mm}

%\noindent
Assume $M_{\Sigma}$ has a psc-metric $g$ as above. Since the metric
$g$ is assumed to be a product metric in a collar neighborhood of the
boundary $\p M$, the restriction $g|_{\partial M}$ is a $P$-invariant
psc-metric. Thus the question of existence of a cylindrical psc-metric
comes down to whether or not there exists a $P$-invariant psc-metric on
$\p M$ that extends to $M$.  When $P = \SU(2)$ or $\SO(3)$, the boundary
$\partial M$ always has a $P$-invariant psc-metric by
\cite{MR0358841}, for which the quotient map is a Riemannian
submersion, so the only question is whether or not this metric on
$\partial M$ extends to $M$. By contrast, when $P = \bZ/k$, the map
$\partial M\to \beta M$ is a covering map, and $\partial M$ has a
$P$-invariant psc-metric if and only if $\beta M$ has a psc-metric,
for which there is a well-developed obstruction theory (e.g.,
\cite{MR866507,MR842428,MR1133900,MR2093079}). When $P = S^1$, the
same is true; i.e., $\partial M$ has an $S^1$-invariant psc-metric
if and only if $\beta M$ has a psc-metric, but this is now a hard
theorem of B\'erard-Bergery \cite[Theorem C]{BB}.

\vspace{2mm}

% %   
%
The problem with the notion of cylindrical metric is that it doesn't
really take into account the local structure near $\beta M$.  A second
possible definition is what one could call a \emph{conical metric} on
$M_\Sigma$.  In this point of view, the primary object of study is the
singular manifold $M_\Sigma$, not the manifold $M$ with non-empty
boundary. A conical metric is again an ordinary Riemannian metric on
the nonsingular part of $M_\Sigma$ (which is diffeomorphic to the
interior of $M$), but we require its local behavior near the singular
stratum to look like $dr^2 + r^2g_P + p^*g_{\beta M}$, where $g_P$ is
a translation-invariant (standard) metric on $P$, $g_{\beta M}$ is a
metric on the singular stratum $\beta M$, and $r$ is the distance to
the singular stratum. Note that such a metric on the nonsingular
stratum is necessarily incomplete, with $M_\Sigma$ its metric completion.
We assume that near the boundary of
the tubular neighborhood $N(\beta M)$, the metric transitions to a
cylindrical psc-metric on $(M,\p M)$, in a sense that will be made precise
in Definition \ref{def:conicalmetric}, and thus \emph{existence of a
  conical psc-metric is a stronger requirement than
  existence of a cylindrical psc-metric}. When $P = S^1$ or
$\SU(2)=S^3$, the cone on $P$ is homeomorphic to $\bR^2$ or $\bR^4$,
and so a conical metric near the singular stratum $\Sigma$ locally
looks like an ordinary rotationally invariant Riemannian metric on a
vector bundle over $\beta M$, of fiber dimension $2$, resp., $4$, and
actually extends to a smooth Riemannian metric on $M_\Sigma$.  Here is
our main question:

\vspace{2mm}

\noindent
{\bf Question:} \emph{When does $M_\Sigma$ admit a cylindrical or
  conical psc-metric?}
\label{q:main}
\vspace{2mm}

\begin{remark}
In general, existence of a conical psc-metric is stronger
than existence of a cylindrical psc-metric. However, they are the
same provided $P=\bZ/k$, 
since then the term $r^2g_P$ drops out of the definition of
conical metric.  When $P=S^1$, there are some cases where
existence of a cylindrical psc-metric implies existence of a
conical psc-metric, but for the most part we will focus on the latter,
which has greater geometric significance.  See Remark \ref{rem:cylvscone}
for further discussion.
\end{remark}

\subsection{Main results}
We consider two cases: when $P=\bZ/k$ or $P=S^1$.

\subsubsection{The case of $P=\bZ/k$} We denote by 
$\Omega^{\spZk}_*(-)$ the bordism theory of spin manifolds with fibered
$\bZ/k$-singularities, and by $\MSpin^{\bZ/k\fb}$ the spectrum which
represents this bordism theory.
\vspace{2mm}

In more detail, the group $\Omega^{\spZk}_n(X)$ consists of
equivalence classes of maps $f \co M_{\Sigma} \to X$, where
$M_{\Sigma}$ is the singular space associated to an $n$-dimensional
spin manifold $M$ with fibered $\bZ/k$-singularities (with given
$\bZ/k$-fold regular covering map $p\co \p M \to \beta M$ preserving
the spin structure on $\p M$ induced from the spin structure on $M$).
Two such maps $f\co M_\Sigma\to X$ and $f'\co M'_\Sigma\to X$ are said
to be equivalent if there is a spin bordism
$\overline M \co M \rightsquigarrow M'$ between $M$ and
$M'$ as spin manifolds with boundary, with a $k$-fold regular covering
map $\overline p \co \p \overline M \to \beta \overline M$ given by a
free action of $\bZ/k$ on $ \p \overline M$, such that the
restrictions $\overline p|_{\p M}$ and $\overline p|_{\p M'}$ coincide
with the corresponding maps $p \co \p M \to \beta M$ and $p' \co \p M'
\to \beta M'$, and there is a map $\overline f\co \overline M \to X$ restricting
to $f$ and $f'$ on $M$ and $M'$.  In particular, the manifold $\beta
\overline M$ gives a spin bordism of regular closed spin manifolds
$\beta \overline M \co \beta M \rightsquigarrow \beta M'$.
\vspace{2mm}

The groups $\Omega^{\spZk}_n$ are closely related to the regular spin
bordism groups; indeed, there is an exact triangle
\begin{equation}\label{eq:1}
  \begin{diagram}
    \setlength{\dgARROWLENGTH}{1.95em}
  \node{\Omega^{\spin}_*}
          \arrow[2]{e,t}{i}
  \node[2]{\Omega^\spZk_*}
          \arrow{sw,t}{\beta}
  \\
  \node[2]{\Omega^{\spin}_*(B\bZ/k)}
  \arrow{nw,t}{\tau}
  \end{diagram}
\end{equation}
Here $i\co  \Omega^{\spin}_*\to \Omega^\spZk_*$ is the homomorphism which
considers a regular spin manifold as a manifold with empty
singularity, the degree $-1$ homomorphism $\beta \co 
\Omega^\spZk_*\to \Omega^{\spin}_{*-1}(B\bZ/k)$ takes a
$\bZ/k$-fibered manifold $M$ to the map $\beta M \to B\bZ/k$
classifying the $\bZ/k$-fold regular covering $p\co  \p M \to \beta M$,
and finally, $\tau \co  \Omega^{\spin}_*(B\bZ/k)\to \Omega^{\spin}_*$ is
a standard transfer.
\vspace{2mm}

We denote by $\KO$ the spectrum representing real $K$-theory, and by
$\alpha \co  \MSpin \to \KO$ the map of spectra corresponding to the index map
$\alpha \co  \Omega^{\spin}_*\to KO_*$. The transfer map $\tau$ in $KO$ gives
the map of spectra $\tau^{\KO}\co \KO \wedge (B\bZ/k)_+ \to \KO$;
we denote by $\KO^{\Zk}$ the cofiber of the map $\tau^{\KO}$.
Then there is a natural map $\alpha^{\Zk}\co  \MSpin^{\Zk}\to \KO^{\Zk}$
which makes the following diagram of spectra commute:
\begin{equation}\label{eq:2}
  \begin{diagram}
    \setlength{\dgARROWLENGTH}{1.95em}
\node{\MSpin\wedge(B\bZ/k)_+}
        \arrow{e,t}{\tau}
        \arrow{s,t}{\alpha\wedge\text{Id}}
\node{\MSpin}
        \arrow{e,t}{i}
        \arrow{s,t}{\alpha}
\node{\MSpin^{\Zk}}
        \arrow{s,t}{\alpha^{\Zk}}
\\
\node{\KO\wedge(B\bZ/k)_+}
        \arrow{e,t}{\tau^{\KO}}
\node{\KO}
        \arrow{e,t}{i^{\KO}}
\node{\KO^{\Zk}}
  \end{diagram}.
\end{equation}
It turns out that the map $\alpha^{\Zk}$ is still
not quite the right ``index map'', since it can be nonzero on some
psc-manifolds with fibered $\bZ/k$-singularities.  For example,
even-dimensional disks with a free $\bZ/k$-action on the boundary
sphere will map nontrivially under
$\alpha^{\Zk}$. However, by composing $\alpha^{\Zk}$
  with the real assembly map $KO_{*-1}(B\bZ/k)\to   KO_{*-1}(\bR[\bZ/k])$
  and its splitting we get the index homomorphism
\begin{equation*}
  \ind^{\Zk}\co
  \Omega^\spZk_*\to KO^\Zk_*.
\end{equation*}
(See Definition \ref{def:Zkobstrmap} and the comments just before it
  for more details.)  The image contains the torsion (all of order
  $2$) in the $KO$-theory of the real group ring of the cyclic group
  $\bZ/k$.  Here is our first main result on the existence of
  psc-metrics:
\begin{thmA}
Let $M$ be a spin manifold with fibered $\bZ/k$-singularities, of
dimension $n\ge 6$.  Assume that $\partial M$ is non-empty, and both
$M$ and $\p M$ are connected and simply connected, and the action of
$\bZ/k$ on $\p M$ preserves the spin structure.  Then $M$ admits a
metric of \psc\ if and only if $\ind^{\Zk}([M])$ vanishes in the group
$KO^\Zk_*$.
\end{thmA}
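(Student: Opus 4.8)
\emph{Strategy.} The plan is to prove necessity by index theory and sufficiency by a surgery reduction followed by a bordism (and homotopy-theoretic) computation. Throughout I use that, since $P=\bZ/k$, cylindrical and conical psc-metrics coincide, so a psc-metric on $M_\Sigma$ is an ordinary psc-metric on $M$ which is a $\bZ/k$-invariant product metric near $\p M$ — equivalently (the action being free) a psc-metric on $M$ restricting near $\p M$ to the pullback of a psc-metric on $\beta M$.

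\emph{Necessity.} Suppose $M$ carries such a metric. Then the spin Dirac operator on $(M,\p M)$, twisted by the flat bundle associated to the classifying map $\beta M\to B\bZ/k$, has a well-defined relative index, and a Lichnerowicz/Bochner argument shows it vanishes. Matching this invariant with the construction preceding Definition~\ref{def:Zkobstrmap} — via the commuting diagram \eqref{eq:2}, the exact triangle \eqref{eq:1}, and the compatibility of the analytic index with the real assembly map $KO_{*-1}(B\bZ/k)\to KO_{*-1}(\bR[\bZ/k])$ and its splitting — identifies it with $\ind^{\Zk}([M])$, which therefore vanishes. (Under the hypotheses $\pi_1(\beta M)=\bZ/k$, so this is exactly the Rosenberg obstruction for $\beta M$ refined by the obstruction to extending the metric over $M$, which is why $KO_*(\bR[\bZ/k])$ enters.)

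\emph{Surgery reduction.} The first half of sufficiency is a surgery principle in the category of spin manifolds with fibered $\bZ/k$-singularities: if $M_\Sigma$ is obtained from $M'_\Sigma$ by a surgery of codimension $\ge 3$ performed either in the interior $\mathring M'$ or on $\beta M'$ (the latter chosen to lift to a free $\bZ/k$-equivariant surgery on $\p M'$, then extended across the collar and over $M'$), then a psc-metric on $M'$ transports to one on $M$. This is the Gromov--Lawson--Schoen--Yau technique as adapted by B\'erard-Bergery and in \cite{MR1857524}; the only new ingredient is the equivariant/fibered bookkeeping near $\beta M$. Given this, the usual handle-trading argument turns a bordism $\overline M_\Sigma$ between two representatives of a class in $\Omega^{\spZk}_n$ — with $M,M'$ and $\p M,\p M'$ connected and simply connected and $n\ge 6$ — into a composite of such surgeries on $\mathring{\overline M}$ and on $\beta\overline M$, carried out compatibly. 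Hence whether $M$ admits a psc-metric depends only on $[M]\in\Omega^{\spZk}_n$, and the set $R_*\subseteq\Omega^{\spZk}_*$ of classes with a psc representative is an $\Omega^\spin_*$-submodule (a psc-manifold with fibered singularities times any closed spin manifold is again psc after scaling the second factor).

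\emph{Bordism computation, and the main difficulty.} It remains to show $R_n=\ker\big(\ind^{\Zk}\co\Omega^{\spZk}_n\to KO^\Zk_n\big)$ for $n\ge 6$; the inclusion $\subseteq$ is the necessity part, so the content is the reverse inclusion, i.e.\ producing enough psc representatives. Via $i$, Stolz's theorem supplies psc representatives (e.g.\ $\HP^2$-bundle total spaces) for every class in $\ker(\alpha\co\Omega^\spin_*\to KO_*)$; the ``obvious'' even-dimensional disks with free linear $\bZ/k$-actions on the boundary sphere carry a torpedo psc-metric that is $\bZ/k$-invariant and a product near the boundary, giving psc-classes whose images under $\beta$ run over the relevant generators of $\Omega^\spin_{*-1}(B\bZ/k)$; and products of these with classes of $B\bZ/k$ fill out a submodule. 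Chasing \eqref{eq:1} against \eqref{eq:2}, together with the known $2$-primary structure of $KO_*(\bR[\bZ/k])$ and the splitting of assembly, one verifies that the submodule generated by these psc-classes is precisely $\ker\ind^{\Zk}$ — and this is exactly where the passage from $\alpha^{\Zk}$ to $\ind^{\Zk}$ is essential, since the disk examples are detected by $\alpha^{\Zk}$ yet visibly admit psc-metrics and so must lie in the kernel of the correct obstruction. Reconciling the geometric generators with the algebraic description of $KO^\Zk_*$, and checking that no class outside $\ker\ind^{\Zk}$ is produced, is the step I expect to demand the most care.
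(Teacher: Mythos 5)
Your architecture coincides with the paper's: necessity via index theory (a $C^*$-algebraic/relative index supplying the component in $KO_n(\pt;\bZ/k)$, plus the Rosenberg obstruction for $\beta M$ supplying the component in the torsion of $\widetilde{KO}_{n-1}(\bR[\bZ/k])$ — the paper handles the first by the groupoid $C^*$-algebra of Theorem \ref{thm:Zkindex} rather than a relative-index formulation, but these are equivalent in spirit); then a surgery/bordism reduction as in Theorem \ref{thm:Zkbordism}; then a computation showing the psc classes exhaust $\ker\ind^{\Zk}$. The gap is exactly where you flag it. Your assertion that the disk examples' images under $\beta$ ``run over the relevant generators of $\Omega^{\spin}_{*-1}(B\bZ/k)$'' is the entire content of sufficiency, and it is not automatic: the linear disk examples only realize lens spaces, and one must prove that the portion of $\widetilde\Omega^{\spin}_{n-1}(B\bZ/k)$ annihilated by $\ind^{\Zk}$ is in fact generated by lens spaces (and, in the $2$-primary case, lens-space bundles over $S^2$) times spin psc-manifolds, after the $\HP^2$-bundle summands coming from Stolz's transfer are peeled off.

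Concretely, the inputs you are missing are: (i) for $k$ odd, the collapse of the Atiyah--Hirzebruch spectral sequence for $\widetilde\Omega^{\spin}_*(B\bZ/k)$ and the elimination of generators of the form $S^1\times N$ (the trick of \cite[proof of Theorem 1.3]{MR842428} and \cite{MR1484887}), which together give generators $L^{2m+1}\times N^{4p}$ bounded by $D^{2m+2}\times N^{4p}$; (ii) for $k=2^r$, the structure of $\widetilde{KO}_{n-1}(B\bZ/2^r)$ as a sum of copies of $\bZ/2^\infty$ plus a $\bZ/2$ in degrees $\equiv 2,3 \bmod 8$, together with the fact from \cite[Theorem 2.5]{MR1133900} that assembly is split surjective onto the torsion of $KO_*(\widetilde{\bR[\bZ/k]})$, so that $\ind^{\Zk}$ detects exactly the $\bZ/2$ summands; (iii) the injectivity of the periodization map $ko_{n-1}(B\bZ/2^r)\to KO_{n-1}(B\bZ/2^r)$ (from the spectral-sequence analysis in \cite[Lemma 2.8]{MR1484887}), without which vanishing of the periodic invariant does not control the connective bordism class; and (iv) the identification in \cite[\S5]{MR1484887} of the $\bZ/2^\infty$ summands with lens spaces when $n\equiv 0\bmod 4$ but with lens-space \emph{bundles over $S^2$} when $n\equiv 2\bmod 4$ — the latter are filled by disk bundles over $S^2$, a family of psc generators absent from your list. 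One also needs the faithfulness of $\ind^{\Zk}$ on $ko_*\otimes(\bZ/k)$ to handle the $\Omega^{\spin}(\bZ/k)_n$ subgroup of \eqref{eq:MspinBzk}. Without these steps the equality of the psc submodule with $\ker\ind^{\Zk}$ remains an assertion, not a proof.
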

\begin{proof}[Outline]
Here is an outline of the proof.  In order to prove necessity of
vanishing of $\ind^{\Zk}([M])$, we use {\Ca}ic index theory to show
that $\ind^{\Zk}([M])$ is indeed an 
obstruction to the existence of a psc-metric on a spin manifold $M$
with fibered $\bZ/k$-singularities.  On the other hand, we show also
that the existence of a psc-metric on $M$ depends only on the corresponding
bordism class $[M]\in \Omega^\spZk_*$. To prove that vanishing of the
index $\ind^{\Zk}([M])$ is sufficient, we analyze the spectra
$\MSpin^{\Zk}$ and $\KO^{\Zk}$. In particular, we construct
the cofiber sequences
\begin{equation}\label{eq:3}
  \left\{
  \begin{array}{lclcl}
  \MSpin(\bZ/k) &\to& \MSpin^{\Zk}&\to& \Sigma (\MSpin \wedge B\bZ/k),
  \\
  \KO(\bZ/k)&\to &\KO^{\Zk} &\to&  \Sigma (\KO \wedge B\bZ/k),
  \end{array}\right.
\end{equation}  
where $\MSpin(\bZ/k)$ and $\KO(\bZ/k)$ denote $\MSpin$ and $\KO$ with
$\bZ/k$ coefficients respectively. Moreover, we show that the map
$\alpha^{\Zk}$ is consistent with these decompositions.
\vspace{2mm}
 
To finish the proof, we use the transfer map $T_{\bullet}\co
\Omega^\spZk_{*}(BG)\to \Omega^\spZk_{*+8}$ from \cite{MR1189863},
where $G=\PSp(3)$. Recall that $G$ is the isometry group of the
standard metric on $\HP^2$, and $T_{\bullet}$ takes a map $f\co B \to BG$
to the total space $E$ of the geometric
$\HP^2$-bundle $E\to B$ induced by $f$.
%induced 
%the map 
\vspace{2mm}

The diagram \eqref{eq:2} allows us to use \eqref{eq:3}
and the transfer map $T_{\bullet}$ to show that all elements of the
kernel $\ker \ind^{\Zk} \subset \Omega^\spZk_{n}$ of the index map
can be represented by a manifold with fibered $\bZ/k$-singularities
carrying a psc-metric.
\end{proof}
\subsubsection{The case $P=S^1$}
This case in some ways is similar to the case of
$\bZ/k$-singularities; however, it has new interesting features.
\vspace{2mm}

Let $M$ be a manifold with fibered $S^1$-singularities, i.e., $M$
comes with a free $S^1$-action on the boundary $\p M$. Let
$\p M \to \beta M$ be the smooth $S^1$-fiber bundle given by this action.
Since the cone on $S^1$ is $\bR^2$, the corresponding pseudomanifold
$M_\Sigma$ is actually a smooth manifold, but with a distinguished
codimension two submanifold $\beta M$.  There are two separate
problems to consider: the existence of a cylindrical psc-metric, which
is analogous to a psc-metric on a manifold with Baas-type
singularities, or the existence of a conical psc-metric, which is about
the pseudomanifold $M_\Sigma$, but we focus primarily on the latter.
\vspace{2mm}

We assume that $M$ is a spin manifold. However, it is worth pointing out
that under our definition of fibered $S^1$-singularities, even if $M$
is spin, $\beta M$ or $M_\Sigma$ may not be.  For example, suppose
$M=D^{2n}$ is the unit disk in $\bC^n$, and we equip $\partial M =
S^{2n-1}$ with the usual free action of $S^1$ by scalar multiplication
by complex numbers of absolute value $1$.  Then $\beta M = \partial
M/S^1 = \bC\bP^{n-1}$ is non-spin if $n$ is odd, and $M_\Sigma =
\bC\bP^{n}$ is non-spin if $n$ is even. This phenomenon is related
to the dichotomy between ``even'' and ``odd'' actions of $S^1$ on
a spin manifold, discussed in detail in \cite{MR3449263}.
\vspace{2mm}

In general, the $S^1$-fiber bundle $p\co  \p M \to \beta M$ is given by
some classifying map $f\co  \beta M\to \CP^{\infty}$, which induces a
complex line bundle $L \to \beta M$. 
We split $\p M$ into the disjoint
union of its path-components: $\p M= \bigsqcup \p_i M$, and
let $\beta_i M=p(\p_i M)$. Then
for each component $\p_i M$ in $\pi_0(\p M)$, we have two possibilities:
\begin{enumerate}
\item[(i)] the action of $S^1$ on $\p_i M$ is of even type, i.e.,
  spin-structure preserving, or
\item[(ii)] the action of $S^1$ on $\p_i M$ is of odd type, i.e., is
  not spin-structure preserving.
\end{enumerate}  
In the case (i), the spin structure on $\p_i M$ descends to a spin
structure on $\beta_i M$, but the vertical tangent bundle on
$L$ is not spin, since the $S^1$-invariant spin structure on
$S^1$ does not extend to a spin structure on $D^2$ or on $\bC$.
Hence $M_\Sigma$ is spin$^c$ but not spin.
In the case (ii), $w_2(\beta_i M)$ has to
be in the kernel of the homomorphism
$p^* \co  H^2(\beta_i M; \bZ/2) \to H^2(\p_i M; \bZ/2)$,
since $\p M $ is spin. This means that $c_1(L|_{\beta_i M})\equiv
w_2(\beta_i M)$ mod 2 and $L|_{\beta_i M}$ determines a
spin$^c$-structure on $\beta_i M$. Thus the fiber bundle
$p \co  \p M \to \beta M$ always splits into even and odd components:
\[
p^{\text{even}}\co  \p^{\text{even}} M\to \beta M^{(\spin)},
\ \ \ p^{\text{odd}}\co  \p^{\text{odd}} M\to \beta M^{(\spin^c)}.
\]
Thus the Bockstein operator $\beta$ can be described as
\begin{equation}\label{eq:4}
\beta \co  M \mapsto \{(\beta M^{(\spin)}, f|_{M^{(\spin)}}),
(\beta M^{(\spin^c)}, f|_{M^{(\spin^c)}})\}.
\end{equation}
We denote by $\Omega^{\spS}_*(-)$ the bordism theory of spin manifolds
with fibered $S^1$-singularities and by $\MSpin^{S^1\fb}$ the
corresponding spectrum which represents this bordism theory; see
section \ref{sec:S1} for more details.
The Bockstein operator (\ref{eq:4}) induces a homomorphism
\begin{equation*}
  \beta \co \Omega^\spS_n\to
  \Omega^{\spin}_{n-2}(\CP^{\infty})\oplus \Omega^{\spin^c}_{n-2}.
\end{equation*}
Then we have a transfer map
\[
\tau \co  \Omega^{\spin}_{n-2}(\CP^{\infty})\oplus
\Omega^{\spin^c}_{n-2} \to \Omega_{n-1}^{\spin}.
\]
The transfer $\tau$ takes a pair 
$(N,L)\in \Omega^{\spin}_{n-2}(\CP^{\infty})$ or $(N,L)\in \Omega_{n-2}^{\spin^c}$
(in second case $L$ is a spin$^c$-structure on $N$) to the total
space $\widetilde N$ of the corresponding $S^1$-fiber bundle
$\widetilde N \to N$. We show that there is an exact triangle
\begin{equation}\label{eq:5}
  \begin{diagram}
    \setlength{\dgARROWLENGTH}{1.95em}
  \node{\Omega^{\spin}_*}
          \arrow[2]{e,t}{i}
  \node[2]{\Omega^{\spS}_*}
          \arrow{sw,t}{\beta}
  \\
  \node[2]{\Omega^{\spin}_{*}(\CP^{\infty})\oplus
\Omega^{\spin^c}_{*}}
  \arrow{nw,t}{\tau}
  \end{diagram}
\end{equation}
where $i \co  \Omega^{\spin}_*\to \Omega^{\spS}_*$ takes a closed 
spin-manifold $M$ to a manifold with empty fibered $S^1$-singularity.
At the level of spectra, we obtain the following cofibration:
\[
\Sigma (\MSpin\wedge \CP^{\infty}_+)\vee \Sigma \MSpin^c \xrightarrow{\tau} 
\MSpin \xrightarrow{i} \MSpin^{S^1\fb}.
\]
Just as in the $\bZ/k$-case, we consider corresponding
$K$-theories and obtain the following commutative diagram of spectra:
\begin{equation}\label{eq:6}
  \begin{diagram}
    \setlength{\dgARROWLENGTH}{1.95em}
\node{\Sigma (\MSpin\wedge \CP^{\infty}_+)\vee \Sigma \MSpin^c}
        \arrow{e,t}{\tau}
        \arrow{s,t}{\Sigma \alpha\vee \Sigma \alpha^c}
\node{\MSpin}
        \arrow{e,t}{i}
        \arrow{s,t}{\alpha}
\node{ \MSpin^{S^1\fb}}
        \arrow{s,t}{\alpha^{S^1\fb}}
\\
\node{\Sigma \KO\vee \Sigma \KU}
        \arrow{e,t}{\tau^{\KO}}
\node{\KO}
        \arrow{e,t}{i^{\KO}}
\node{\KO^{S^1\fb}}
  \end{diagram}
\end{equation}
Here $\alpha \co \MSpin \to \KO$ and $\alpha^c\co \MSpin^c \to \KU$ are
the corresponding index maps.\footnote{For more about these maps, see
\cite{MR1166518}.} In particular, we have the index
homomorphism
$\alpha^{S^1\fb}\co \Omega^{\spin,S^1\fb}_n\to KO^{S^1\fb}_n$.
Here is the second main result, on the existence of
a conical psc-metric on a spin manifold with fibered
$S^1$-singularities:
\begin{thmB}
Let $M$ be a spin manifold with fibered $S^1$-singularities, of
dimension $n\ge 7$.  Assume that $\partial M$ is non-empty and connected,
and $M$ and $\beta M$ are simply connected. Then $M$ admits
a conical metric of \psc \ if and only if $\alpha^{S^1\fb}([M])$ vanishes in
$KO^{S^1\fb}_n$ under the above index map, and the relative index
for extending the lift to $\p M$ of some metric of {\psc} on $\beta M$
to $M$ vanishes.
\end{thmB}
\begin{proof}[Outline]
Just as in the $\bZ/k$-case, we show that the map $\alpha^{S^1\fb}$
provides an obstruction to the existence of a psc-metric and that the
existence of a psc-metric on $M$ depends only on the bordism class
$[M]\in \Omega^{\spin,S^1\fb}_n$. This guarantees the necessity in
Theorem B. Again we need more information on the spectra
$\KO^{S^1\fb}$ and $\MSpin^{S^1\fb}$. 
The final step consists of studying the kernels of the
index maps $\MSpin\to \ko$ and $\MSpin^c\to \ku$. In the case of
$\MSpin$, Stolz \cite{MR1189863,MR1259520} showed that 
$\ker\alpha\co\Omega_*^{\spin}\to ko_*$ is the image of a transfer map
$T_{\bullet}\co \Omega^{\spin}_*(BG) \to \Omega^{\spin}_{*+8}$,
where $G=\PSp(3)$ and the transfer amounts to taking the total
space of an $\bH\bP^2$-bundle. One can do something similar
in the (easier) case of $\ker\alpha^c\co\Omega_*^{\spin^c}\to ku_*$, where
this time (since $\MSpin^c$ splits $2$-locally as a sum of
$\Sigma^{4j}\ku$'s and some $\bZ/2$ Eilenberg-Mac Lane spectra;
see \cite[\S8]{MR0234475} and
\cite[p.\ 184]{MR1166518}), the transfer amounts to taking the total
space of $\bC\bP^2$-bundles instead of $\bH\bP^2$
bundles.\footnote{\ There is an additional complication one has to be
  careful about.  There are many spin$^c$-structures on $\bC\bP^2$,
  and we need one for which the image in $ku_4$ vanishes.  This is
  \emph{not} the one given by the complex structure, for which the
  image under $\alpha^c$ in $ku_4\cong \bZ$ is the Todd genus, which is $1$.}
Details of this argument may be found in Section \ref{sec:CP2}.
Then \eqref{eq:6} allows us to show that all elements of the kernel
$\ker \alpha^{S^1\fb} \subset \Omega^{\spin,S^1\fb}_n$
of the index map on which the secondary (relative) index also
vanishes are realized by
$\HP^2$- or $\CP^2$-bundles and hence can be represented by manifolds
with fibered $S^1$-singularities carrying a psc-metric.

The theorem also holds even when $\p M$ is disconnected, assuming the
appropriate indices vanish on each component of $\beta M$.
\end{proof}
\subsection{Plan of the paper}
The organization of the rest of the paper is quite straightforward.
Section \ref{sec:Zk} deals in detail with the case of fibered
$\bZ/k$-singularities, and includes all the details of the proof of
Theorem A. Section \ref{sec:S1} explains the precise definition
of conical psc-metrics in the case of fibered $S^1$-singularities,
and includes all the details of the proof of Theorem B, except
for two results of possibly independent interest which are needed
for the proof of sufficiency, namely
the spin$^c$ bordism theorem, which is proved in Section \ref{sec:surg},
and the theorem on $\bC\bP^2$-bundles, which is proved in Section
\ref{sec:CP2}.

\subsection{Acknowledgments}
We would like to thank Paolo Piazza for many useful suggestions
on the subject of this paper.  Paolo is a coauthor
on a subsequent paper \cite{BJP} dealing with other singularity types $P$.
We would also like to thank Bernd Ammann, Bernhard Hanke, and
Andr{\'e} Neves for organizing an excellent workshop at
the Mathematisches Forschungsinstitut Oberwolfach
in August 2017 on Analysis, Geometry and Topology of Positive Scalar
Curvature Metrics, which led to the present work.  We would also
like to thank the referees for helpful suggestions and criticism
regarding earlier versions of the paper.

\section{The case of fibered $\bZ/k$-singularities}
\label{sec:Zk}
This section will be about giving necessary and sufficient conditions
for a manifold with fibered $\bZ/k$-singularities to admit a
psc-metric, under the additional conditions that $M$ and $\partial M$
are connected, simply connected, and spin. In particular, we prove Theorem A.
\subsection{Some definitions}
\label{sec:Zkdef}
Recall that a compact \emph{closed manifold with fibered
  $\bZ/k$-singularities} means a compact smooth manifold $M$ with
boundary, equipped with with a smooth free action of
the group $P=\bZ/k$ on $\partial M$.
We denote by $\beta M$ the quotient space $\p M/{P}$;
we have a covering map $p\co \partial M\to \beta M$.
The \emph{associated singular space} is $M_\Sigma = M/\!\!\sim$,
where $\sim$ identifies any two points in $\partial M$
having the same image in $\beta M$. We use the notation $(M, \p M\!\to
\!\beta M)$ to emphasize the above $\bZ/k$-fibered structure.  Note
that a $\bZ/k$-manifold in the sense of Sullivan and Baas
\cite{MR0212811,MR0346824} is a special case.
\vspace{2mm}

Just as in the case of Sullivan-Baas singularities, we say that a
smooth manifold $\overline M$ (with non-empty regular boundary $\p
\overline M$) is a \emph{manifold with fibered $\bZ/k$-singularities
  with boundary} $\delta \overline M = M$ if the boundary $\p
\overline M$ is given a splitting $\p \overline M=\delta \overline
M\cup \p_1 \overline M$ together with free $\bZ/k$-action on $\p_1
\overline M$. It is required that $M \cap \p_1\overline M = \p M$ so
that the $\bZ/k$-action on $\p_1\overline M$ restricts to a given free
$\bZ/k$-action on $\p M$.  In particular, we obtain that the
$\bZ/k$-covering $\p_1\overline M\to \beta \overline M$ restricted to
$\p M$ coincides with the $\bZ/k$-covering $\p M \to \beta M$, where
$\p (\beta \overline M) = \beta M$.  We denote by $\overline
M_{\Sigma}$ the associated singular space where we identify those
points in $\p_1\overline M$ which belong to the same orbit under
$\bZ/k$-action. Note that then $\delta \overline M_{\Sigma}=
M_{\Sigma}$ by definition.
\begin{remark}
We should emphasize that $\overline M$ should be thought as a manifold
with corners, where its \emph{corner} is the manifold $ \p(\delta
\overline M)= \delta \overline M \cap \p_1\overline M = -
\p(\p_1\overline M)$;  see Fig.\ \ref{fig1}.
\end{remark}

\begin{figure}[!htb]
\begin{picture}(0,0)
\put(64,53){{\small $\overline M$}}
\put(54,127){{\small $M=\delta \overline M$}}
\put(264,80){{\small $\beta \overline M$}}
\put(154,80){{\small $\partial_1\overline M$}}
\put(194,75){{\small $\beta$}}
\put(160,70){\vector(1,0){70}}
\put(205,120){{\small $\beta M$}}
\put(90,102){{\small $\partial_1M$}}
\end{picture}
\includegraphics[height=1.7in]{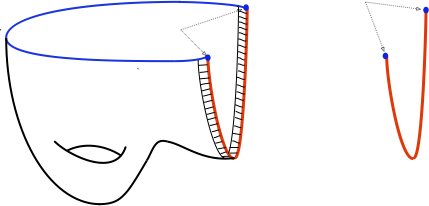}
\caption{A manifold $\overline M$ with fibered singularities and corners}
\label{fig1}
\end{figure} 
%\begin{figure}[hbt]
%\begin{tikzpicture}
%\draw[thick,blue] (0,0) circle [x radius=1.5, y radius=.5]; 
%\draw (1.0,-.6) node {$\scriptstyle{\delta \overline M = M}$};
%\draw[thick,red] (1.5,0.5) circle [x radius=.25, y radius=.5];  
%\draw[thick] (1.5,1.0) .. controls (1,1.5) and 
%(-2, .5) .. (-1.5, 0);
%% arc (90:180:3 and 1);
%\draw (2.0, 1.2) node {$\scriptstyle{\p_1 \overline M}$};
%\draw [thick,red,->] (2.0,0.5) -- (2.3,0.5);
%\draw[thick,red] (2.7,0.5) circle [x radius=.15, y radius=.3]; 
%\draw (3.2, 0.8) node {$\scriptstyle{\beta \overline M}$};
%\filldraw (1.5,0) circle (1.5pt);
%\filldraw (2.7,0.2) circle (1.5pt);
%\draw (1.7,-.2) node {$\scriptstyle{\p M}$};
%\draw (2.9,0) node {$\scriptstyle{\beta M}$};
%\end{tikzpicture}
%\caption{A manifold $\overline M$ with fibered singularities and corners}
%  \label{fig1}
%\end{figure}  
%\vspace{2mm}
A \emph{metric of {\psc}} on $\overline M$ is a Riemannian psc-metric
on $\overline M$, which is a product metric in a collar neighborhood
of the boundary $\partial \overline M$, and with the metric on
$\partial_1 \overline M$ $\bZ/k$-invariant. If $\delta \overline
M=\emptyset$, it defines a psc-metric on a closed manifold with
fibered $\bZ/k$-singularities.
%\vspace{2mm}

\subsection{Bordism theory}
\label{sec:Zkbord}
Here we set up the bordism theory and the variant of $K$-theory that
will be needed in this section.  We use a slight modification of the
bordism theory of Baas \cite{MR0346824}. Below we assume that all
manifolds in this section are spin.
\vspace{2mm}

Let $X$ be a topological space. Then the group $\Omega^\spZk_n(X)$
consists of equivalence classes of maps $f \co M_\Sigma\to X$, where
$M_\Sigma$ is the singular space associated to an $n$-dimensional spin
manifold $(M,\partial M\!\to \!\beta M)$ with fibered
$\bZ/k$-singularities, with
$\bZ/k$ preserving the spin structure on $\p M$ (this is only an issue
if $k$ is even).  Two such maps $f \co M_\Sigma\to X$ and $f'\co
M'_\Sigma\to X$ are said to be equivalent if there exist a spin
manifold $\overline M$ with fibered $\bZ/k$-singularities with
boundary $\delta \overline M= M\sqcup -M'$ and a map $\overline f\co
\overline M \to X$ restricting to $f$ and $f'$ on $M$ and $M'$
respectively.
\vspace{2mm}

We use notation $\overline M \co  M \rightsquigarrow M'$.
In particular, the manifold $\beta \overline M$ gives a regular spin
bordism between closed manifolds: $\beta \overline M \co \beta M
\rightsquigarrow \beta M'$.
\vspace{2mm}

Exactly as in \cite{MR0346824}, it is easy to see that
$\Omega^\spZk_*(-)$ is a homology theory given by a spectrum
$\MSpin^{\bZ/k\fb}$. There is an obvious natural transformation $i \co
\Omega^{\spin}_*(-)\to \Omega^\spZk_*(-)$ which is given by
considering closed spin manifolds as manifolds with empty fibered
$\bZ/k$-singularity. The Bockstein operator $M \mapsto \beta M$
comes together with a map $h\co \beta M\to B\bZ/k$ classifying the
$\bZ/k$-covering $p\co  \p M \to \beta M$. This determines the
transformation
$\beta\co \Omega^\spZk_*(-)\to\Omega^{\spin}_*(-\times B\bZ/k)$
of degree $-1$. Then we have a transformation
\begin{equation*}
  \tau \co \Omega^{\spin}_*(-\times B\bZ/k)\to \Omega^{\spin}_*(-)
\end{equation*}
of degree $0$, given by a transfer: it sends a map $f\co N\to X \times
B\bZ/k$ to the lift $\tf\co \tN \to X$, where $\tN$ is the $k$-fold
cover of $N$ determined by the composite $\text{pr}_2\circ f \co N
\xrightarrow{f} X \times B\bZ/k \xrightarrow{\text{pr}_2} B\bZ/k$.
\vspace{2mm}

Just as in \cite[Theorem 3.2]{MR0346824} and \cite[\S2.1]{MR1857524},
there is an exact triangle of (unreduced) bordism theories
\begin{equation}
  \begin{diagram}
    \setlength{\dgARROWLENGTH}{1.95em}
  \node{\Omega^{\spin}_*(-)}
          \arrow[2]{e,t}{i}
  \node[2]{\Omega^\spZk_*(-)}
          \arrow{sw,t}{\beta}
  \\
  \node[2]{\Omega^{\spin}_*(-\times B\bZ/k)}
  \arrow{nw,t}{\tau}
  \end{diagram}
  \label{eq:bordtriZk}
\end{equation}
%Restating \eqref{eq:bordtriZk}
Restating the above assertion in slightly different language, we have
the following:
\begin{proposition}
  The transfer $\tau$ defines a map of bordism spectra $\tau\co \MSpin
  \wedge (B\bZ/k)_+ \to \MSpin$, and $\MSpin^{\bZ/k\fb}$ is the
  cofiber of this map.\footnote{\ Here, as usual, the subscript $_+$
    indicates the addition of a disjoint basepoint, and is needed to
    convert from reduced to unreduced homology theories.}
\label{prop:BZkcofib}
\end{proposition}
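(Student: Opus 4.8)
The proposition just repackages the exact triangle~\eqref{eq:bordtriZk}, so the plan has three parts: (i) upgrade the transfer natural transformation $\tau$ to a genuine map of spectra; (ii) form its cofiber $C$, construct a geometric comparison natural transformation $\Omega^{\spZk}_*(-)\to C_*(-)$, and run the five lemma against~\eqref{eq:bordtriZk}; (iii) conclude with Brown representability. The only genuinely delicate point is~(i); the rest is formal.

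For~(i): the transfer attached to $\bZ/k$-coverings is a stable construction. Over the $m$-skeleton of $B\bZ/k$ the universal covering $E\bZ/k\to B\bZ/k$ restricts to a $k$-fold covering of a finite complex, and hence has a stable transfer map; these are compatible as $m$ grows and, since $E\bZ/k$ is contractible, assemble into $\mathrm{trf}\co\Sigma^\infty(B\bZ/k)_+\to\Sigma^\infty(E\bZ/k)_+\simeq\bS$ (the Becker--Gottlieb transfer of the universal $\bZ/k$-covering). Set $\tau:=\MSpin\wedge\mathrm{trf}\co\MSpin\wedge(B\bZ/k)_+\to\MSpin$. One then checks that on homotopy groups this coincides with the geometric transfer of~\eqref{eq:bordtriZk}, namely the operation sending $[N\xrightarrow{(\phi,h)}X\times B\bZ/k]$ to the induced $k$-fold cover $\widetilde N$ mapped to $X$; this is the standard compatibility of the Pontryagin--Thom construction with covering transfers (the normal data along the fibers of a covering being canonically trivial, no correction term appears). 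One must also check that the map $\tau$ occurring in~\eqref{eq:bordtriZk} is literally this one, not merely one agreeing with it on $\pi_*$ of a point --- this is where the real work lies.

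For~(ii): put $C:=\mathrm{cofib}\bigl(\MSpin\wedge(B\bZ/k)_+\xrightarrow{\tau}\MSpin\bigr)$, with canonical map $\iota\co\MSpin\to C$ and connecting map $\delta\co C\to\Sigma\bigl(\MSpin\wedge(B\bZ/k)_+\bigr)$. Then $C_*(X):=\pi_*(C\wedge X_+)$ is a homology theory whose Puppe long exact sequence is natural in $X$ and has the same outer terms $\Omega^{\spin}_*(X)$ and $\Omega^{\spin}_*(X\times B\bZ/k)$, and the same map $\tau$, as~\eqref{eq:bordtriZk}. I would define a natural transformation $\Theta\co\Omega^{\spZk}_*(-)\to C_*(-)$ geometrically: given $[f\co M_\Sigma\to X]\in\Omega^{\spZk}_n(X)$, the singular stratum produces a class $b=[\beta M\xrightarrow{(f|_{\beta M},\,h)}X\times B\bZ/k]\in\Omega^{\spin}_{n-1}(X\times B\bZ/k)$ whose transfer $\tau(b)$ is the class of the total space $\p M$ of the covering mapped to $X$, while the resolution $M$ (with its induced map to $X$) is a nullbordism of $\tau(b)$; as in the standard description of geometric cycles for a cofiber, the pair $(b,M)$ determines an element of $C_n(X)$ lying over $b$, well defined since a singular bordism $\overline M$ supplies simultaneously a bordism of the classes $b$ and a bordism of the nullbordisms $M$. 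By construction $\delta\circ\Theta=\beta$ and $\Theta\circ i=\iota$, and the remaining squares of the ladder comparing~\eqref{eq:bordtriZk} with the Puppe sequence of $C$ commute because they involve only $\tau$ and identity maps; the five lemma then shows $\Theta$ is an isomorphism for every space $X$.

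For~(iii): by the uniqueness clause in Brown representability, two spectra representing isomorphic homology theories are equivalent, so $\MSpin^{\bZ/k\fb}\simeq C=\mathrm{cofib}(\tau)$, compatibly with $i$ and $\iota$ --- which is the assertion. I expect~(i), turning the covering transfer into a genuine stable map and matching it with the geometric transfer in~\eqref{eq:bordtriZk}, to be the main obstacle; (ii) and~(iii) are routine, and restricting throughout to finite CW complexes also sidesteps any $\lim^1$ or phantom-map subtleties in passing between natural transformations and maps of spectra.
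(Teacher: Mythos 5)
Your proposal is correct, but it distributes its effort very differently from the paper. The paper's proof treats the spectrum-level statement as an immediate restatement of the exact triangle \eqref{eq:bordtriZk} and spends all of its length on the geometric content: a cycle-by-cycle verification of the six exactness conditions ($\beta\circ i=0$, $\tau\circ\beta=0$, $i\circ\tau=0$, and the three kernel-image inclusions), exhibiting explicit nullbordisms such as $W=\widetilde N\times[0,1]$ with $\partial_1 W=\widetilde N\times\{0\}$ and $\beta W=N$. You do the opposite: you take the exactness of \eqref{eq:bordtriZk} as given (which the paper does license, citing Baas and \cite{MR1857524} just above the proposition) and carefully supply the formal half that the paper waves through --- realizing $\tau$ as $\MSpin$ smashed with the stable transfer of the universal $\bZ/k$-covering, building the geometric comparison map $\Theta$ into the cofiber's homology theory, and closing with the five lemma and Brown representability. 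The two arguments are genuinely complementary: yours makes rigorous the passage from an exact triangle of homology theories to an identification of representing spectra (including the point, correctly flagged as the delicate one, that the spectrum-level transfer induces the geometric covering-space transfer on bordism classes via Pontryagin--Thom), while the paper's makes rigorous the exactness itself. A fully self-contained proof would need both halves; each text leans on the other's half as standard.
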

\begin{proof}
Clearly, this is just a restatement of the assertion above about the
exact triangle \eqref{eq:bordtriZk}.  To explain the geometry
involved, we write out the geometrical proof of the corresponding
exact sequence:
\begin{equation*}
\cdots \xrightarrow{\beta} \Omega^{\spin}_n(B\bZ/k) \xrightarrow{\tau}
\Omega^{\spin}_n \xrightarrow{i} \Omega^{\spZk}_n \xrightarrow{\beta}
\Omega^{\spin}_{n-1}(B\bZ/k) \xrightarrow{\tau} \cdots.
\end{equation*}
That $\beta\circ i=0$ is clear.  To see that $\tau\circ \beta = 0$,
observe that if $M$ has fibered $\bZ/k$-singularities and $\partial M
= \widetilde N$, $\beta M = N$, then $\tau\circ \beta([M]) =
[\widetilde N]$.  And $\widetilde N$ bounds as a spin manifold since
$\partial M = \widetilde N$.  To see that $i\circ \tau = 0$, note that
given $N\to B\bZ/k$ with corresponding covering $\widetilde N$, then
$i\circ \tau([N\to B\bZ/k]) = [\widetilde N]$. This is $0$ since
$\widetilde N$ bounds as a manifold with fibered
$\bZ/k$-singularities; take $W = \widetilde N\times [0, 1]$ with 
$\partial_1 W = \widetilde N\times \{0\}$, 
$\delta W = \widetilde N\times \{1\}$, $\beta W =
N\times \{0\}\cong N$. 

%\vspace{2mm}
To get $\ker \beta \subseteq \image i$, observe that if $M$ has
fibered $\bZ/k$-singularities and $\beta M$ bounds as a spin manifold
with mapping to $B\bZ/k$, say $\beta M = \partial N$, then
the associated $k$-fold covering $\widetilde N$ has boundary $\partial M$,
and $M$ is bordant as a spin manifold with fibered $\bZ/k$-singularities
to the closed manifold $M\cup_{\partial M}-\widetilde N$.
To get $\ker\tau \subseteq \image \beta$, observe that if
$N\to B\bZ/k$ is a spin manifold with specified $k$-fold covering
$\widetilde N$, and if $\widetilde N$ is a spin boundary, say
$\partial M = \widetilde N$, then $M$ is a spin manifold with
fibered $\bZ/k$-singularities and with $\beta[M] = [N\to B\bZ/k]$.
Finally, to see that $\ker i \subseteq \image \tau$, suppose $M$ is a
closed manifold that bounds as a manifold with fibered
$\bZ/k$-singularities.  Then there is a $W$ with $\partial W$ decomposed
into two pieces: one of them a copy of $M$ and the other
$\widetilde M'$ projecting down to some $M'$; this shows $M$ is
bordant to $\widetilde M'$ in the image of $\tau$.
\end{proof}
\subsection{Relevant $K$-theory}
We denote by $\KO$ the spectrum representing real $K$-theory.  The
transfer map $\tau\co \MSpin \wedge (B\bZ/k)_+ \to \MSpin$ has its
analog in $K$-theories:
\begin{equation}\label{eq:transfer}
\tau^{\KO}\co \KO \wedge (B\bZ/k)_+ \to \KO,
\end{equation}
which is compatible with the transfer map for $k$-fold coverings of spin
manifolds.  Let $KO^{\Zk}_*(-)$ be the $K$-theory associated to the
cofiber $\KO^{\Zk}$ of the transfer map (\ref{eq:transfer}).

From this definition it follows  that the groups
$KO^{\Zk}_*$ fit into a long exact sequence
\begin{equation}
  \cdots \to KO_n(B\bZ/k) \xrightarrow{\tau^{\KO}} KO_n
  \xrightarrow{i^{\KO}} KO^{\Zk}_n \xrightarrow{\beta^{\KO}}
  KO_{n-1}(B\bZ/k) \xrightarrow{\tau^{\KO}} KO_{n-1} \to \cdots,
\label{eq:KOBzk}
\end{equation}
where when we decompose $KO_n(B\bZ/k)$ as $KO_n\oplus
\widetilde{KO}_n(B\bZ/k)$, $\tau^{\KO}$ is multiplication by $k$ on
the first summand and $0$ on the second.  A similar statement holds
for spin bordism.  Thus we have the following:
\begin{proposition}
 We have exact sequences
 \begin{equation} 
   0\to
    \Omega^{\spin}(\bZ/k)_n \to \Omega^{\spZk}_n \xrightarrow{\beta}
  \widetilde\Omega^{\spin}_{n-1}(B\bZ/k) \to 0
\label{eq:MspinBzk}
\end{equation}
and
\begin{equation}
  0 \to KO(\bZ/k)_n \to KO^{\Zk}_n \xrightarrow{\beta^{\KO}}
  \widetilde{KO}_{n-1}(B\bZ/k) \to 0.
\label{eq:KOBzk1}
\end{equation}
Here  $\Omega^{\spin}_*(\bZ/k)$ and
$KO_*(\bZ/k)$   denote   spin   bordism   and   real
$K$-theory with $\bZ/k$  coefficients.  If $k$ is odd,  then the group
$KO_n^{\Zk}$ vanishes except when $n$ is even, when $KO_n^{\Zk}$ is an
extension of $\widetilde{KO}_{n-1}(B\bZ/k)$  by $KO_n\otimes (\bZ/k)$,
and consists of odd torsion.
\label{prop:KOBzk}
\end{proposition}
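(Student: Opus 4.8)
The plan is to deduce everything formally from the cofibration of Proposition~\ref{prop:BZkcofib}, its $K$-theory analogue (the cofiber $\KO^{\Zk}$ of \eqref{eq:transfer}), and the behaviour of the transfer on the two summands of $KO_*(B\bZ/k)=KO_*\oplus\widetilde{KO}_*(B\bZ/k)$ recorded just before the Proposition. Write $(B\bZ/k)_+\simeq S^0\vee B\bZ/k$, so that $\MSpin\wedge(B\bZ/k)_+\simeq\MSpin\vee(\MSpin\wedge B\bZ/k)$ and $\tau$ decomposes as $\tau=(\tau\circ j,\,\tau')$, where $j\co\MSpin\to\MSpin\wedge(B\bZ/k)_+$ is the inclusion of the wedge summand dual to the basepoint (geometrically, a closed spin manifold with the constant map to $B\bZ/k$) and $\tau'\co\MSpin\wedge B\bZ/k\to\MSpin$ is the restriction of $\tau$ to the reduced factor. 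Since $\tau\circ j$ is the transfer along trivial $k$-fold covers, it is homotopic to multiplication by $k$, and its cofiber is $\MSpin/k=\MSpin(\bZ/k)$, spin bordism with $\bZ/k$-coefficients. The commuting square with rows $\MSpin\xrightarrow{k}\MSpin$ and $\MSpin\wedge(B\bZ/k)_+\xrightarrow{\tau}\MSpin$ and verticals $j$ and $\id$ induces a map $\MSpin(\bZ/k)\to\MSpin^{\Zk}$ (the identity on the middle $\MSpin$) whose cofiber is the suspension of $\mathrm{cofib}(j)=\MSpin\wedge B\bZ/k$; this is the first cofiber sequence of \eqref{eq:3},
\[
\MSpin\wedge B\bZ/k\xrightarrow{\ \phi\ }\MSpin(\bZ/k)\longrightarrow\MSpin^{\Zk}\xrightarrow{\ \beta\ }\Sigma(\MSpin\wedge B\bZ/k),
\]
in which $\beta$ is the reduced component of the Bockstein of \eqref{eq:bordtriZk} and, after unwinding the octahedral axiom, $\phi$ equals $\tau'$ followed by the reduction $\MSpin\to\MSpin/k$. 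The identical construction with $\KO$ and $\tau^{\KO}$ in place of $\MSpin$ and $\tau$ gives the second cofiber sequence of \eqref{eq:3}.

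The short exact sequences \eqref{eq:MspinBzk} and \eqref{eq:KOBzk1} now follow at once: the quoted vanishing of $\tau$ (resp.\ $\tau^{\KO}$) on the reduced summand $\widetilde\Omega^{\spin}_*(B\bZ/k)$ (resp.\ $\widetilde{KO}_*(B\bZ/k)$) says precisely that $\tau'_*=0$, hence $\phi_*=0$, so the long exact homotopy sequence of the cofibration above collapses into short exact sequences whose kernels are, by construction, the mod-$k$ coefficient groups $\Omega^{\spin}(\bZ/k)_n$ and $KO(\bZ/k)_n$ and whose quotients are the reduced groups $\widetilde\Omega^{\spin}_{n-1}(B\bZ/k)$ and $\widetilde{KO}_{n-1}(B\bZ/k)$, with $\beta,\beta^{\KO}$ the reduced Bocksteins.

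For the final assertion, assume $k$ odd. First, $KO(\bZ/k)_n=KO_n\otimes\bZ/k$: the universal-coefficient term $\Tor(KO_{n-1},\bZ/k)$ vanishes because $KO_*$ is $\bZ$ in degrees $\equiv0\pmod 4$, $\bZ/2$ in degrees $\equiv1,2\pmod 8$, and $0$ otherwise, none of which has odd torsion; and $KO_n\otimes\bZ/k$ is $\bZ/k$ for $n\equiv0\pmod 4$ and $0$ for all other $n$ (since $\bZ/2\otimes\bZ/k=0$). Second, $\widetilde{KO}_m(B\bZ/k)=0$ for $m$ even: in the Atiyah--Hirzebruch spectral sequence $E^2_{p,q}=\widetilde H_p(B\bZ/k;KO_q)$ the coefficient group $KO_q$ is $\bZ$, $\bZ/2$, or $0$, and since $\widetilde H_*(B\bZ/k;\bZ)$ is $\bZ/k$ in positive odd degrees and $0$ in even degrees while $\widetilde H_*(B\bZ/k;\bZ/2)=0$ for $k$ odd, the only nonzero entries lie at bidegrees $(p,q)$ with $p$ odd and $q\equiv0\pmod 4$, where $E^2_{p,q}=\bZ/k$; these all have odd total degree, so — as $d_r$ lowers total degree by $1$ — all differentials vanish and $E^\infty=E^2$. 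Thus $\widetilde{KO}_m(B\bZ/k)=0$ for $m$ even and, being assembled from copies of $\bZ/k$, is odd torsion for $m$ odd. Substituting into \eqref{eq:KOBzk1}: for $n$ odd both outer terms vanish, so $KO^{\Zk}_n=0$; for $n$ even, $KO^{\Zk}_n$ is an extension of the odd-torsion group $\widetilde{KO}_{n-1}(B\bZ/k)$ by the odd-torsion group $KO_n\otimes\bZ/k$, hence is odd torsion.

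The only step demanding real care is the octahedral bookkeeping in the first paragraph: verifying that the connecting map $\phi$ of \eqref{eq:3} is $\tau'$ post-composed with reduction mod $k$ (so that the quoted vanishing of the transfer on the reduced factor applies) and that $\beta$ is the reduced component of the geometric Bockstein. This reduces to the standard fact that the transfer of a trivial $k$-fold cover is multiplication by $k$, together with a diagram chase through the octahedral axiom for $\MSpin\xrightarrow{j}\MSpin\wedge(B\bZ/k)_+\xrightarrow{\tau}\MSpin$; everything else is formal or the spectral-sequence computation.
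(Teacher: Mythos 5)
Your proof is correct and takes essentially the same route as the paper's: both rest on the cofiber description of $\MSpin^{\Zk}$ and $\KO^{\Zk}$ from Proposition \ref{prop:BZkcofib} and \eqref{eq:transfer}, on the stated fact that the transfer is multiplication by $k$ on the split-off $\MSpin$ (resp.\ $\KO$) summand and zero on the reduced summand, and, for $k$ odd, on the Atiyah--Hirzebruch computation using that $\widetilde H_*(B\bZ/k)$ is concentrated in odd degrees. The only difference is presentational: your octahedral construction of the cofibration $\MSpin\wedge B\bZ/k \to \MSpin(\bZ/k) \to \MSpin^{\Zk}$ makes explicit the identification of $\ker\beta$ with the mod-$k$ coefficient groups (and incidentally establishes the sequences \eqref{eq:3} promised in the introduction), a point the paper's terser ``most of this follows immediately'' leaves implicit.
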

\begin{proof}
  Most of this follows immediately from the exact sequences
  in Proposition \ref{prop:BZkcofib} and \eqref{eq:KOBzk}.
  
  When $k$ is odd, $KO_n(\pt;\bZ/k)= KO_n \otimes (\bZ/k)$,
  which
  is $\bZ/k$ when $4|n$, $0$ otherwise.  Since $\widetilde{H}_*(B\bZ/k)$
  is only nonzero in odd dimensions, the result follows.
\end{proof}
Finally, we need to consider the relationship between the spectra
$\MSpin^{\bZ/k\fb}$ and $\KO^{\Zk}$.  Let $\alpha\co\! \MSpin \to \KO$
be the usual Atiyah-Hitchin orientation for spin manifolds
(corresponding to the $KO$-index of the Dirac operator).  The
following result is a consequence of the naturality of the transfer:
\begin{proposition}
There is a map of spectra 
$\alpha^{\Zk}\co \MSpin^{\bZ/k\fb} \to \KO^{\Zk}$ making the following diagram
commute:
\begin{equation*}
\xymatrix{\MSpin \wedge (B\bZ/k)_+\ar^(.66)\tau[r] \ar^{\alpha\wedge 1}[d]
  &\MSpin \ar^{i}[r]\ar^\alpha[d]  &\MSpin^{\bZ/k\fb}\ar^{\alpha^{\Zk}}[d]\\
\KO \wedge (B\bZ/k)_+ \ar^(.66){\tau^{\KO}}[r] & \KO \ar^{i^{KO}}[r] &\KO^{\Zk}. }
\end{equation*}
\label{prop:alphaZk}
\end{proposition}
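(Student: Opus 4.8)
The plan is to produce $\alpha^{\Zk}$ directly from the functoriality of the mapping cone (cofiber) construction in the stable homotopy category. We already have, from Proposition \ref{prop:BZkcofib}, that $\MSpin^{\bZ/k\fb}$ is the cofiber of $\tau\co \MSpin\wedge(B\bZ/k)_+\to\MSpin$, and by definition $\KO^{\Zk}$ is the cofiber of $\tau^{\KO}\co\KO\wedge(B\bZ/k)_+\to\KO$. So the statement amounts to the assertion that the square
\begin{equation*}
\xymatrix{\MSpin\wedge(B\bZ/k)_+\ar^(.66)\tau[r]\ar^{\alpha\wedge 1}[d]
 &\MSpin\ar^\alpha[d]\\
\KO\wedge(B\bZ/k)_+\ar^(.66){\tau^{\KO}}[r]&\KO}
\end{equation*}
commutes (up to homotopy), together with the standard fact that a homotopy-commutative square between two cofiber sequences induces a map on cofibers making the resulting two squares commute. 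First I would invoke this general principle about cofiber sequences — it is entirely formal and can be cited from any reference on triangulated/stable homotopy categories — which reduces everything to checking the commutativity of the left-hand square above.

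The commutativity of the left-hand square is exactly the naturality of the transfer with respect to the Atiyah--Hitchin orientation $\alpha$, which is the point flagged in the sentence preceding the statement. I would argue this as follows: the transfer $\tau$ for a $k$-fold covering is induced at the space/spectrum level by a Becker--Gottlieb type stable map (or, concretely for finite coverings, by the classical covering-space transfer), and $\alpha$ is multiplicative and compatible with pushforward/wrong-way maps because the $KO$-index of the Dirac operator behaves well under the transfer — this is the index-theoretic statement that the $KO$-index of the Dirac operator on a spin manifold $N$ equals (the image under $\tau^{\KO}$ of) the $KO$-index of the Dirac operator on its $k$-fold cover, which in turn reflects the fact that the Dirac operator on the cover is the pullback of the Dirac operator on the base. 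At the spectrum level this says $\alpha\circ\tau = \tau^{\KO}\circ(\alpha\wedge 1)$ as maps $\MSpin\wedge(B\bZ/k)_+\to\KO$. I would present this either by citing the naturality of the transfer under maps of ring spectra and $\alpha$ being a map of ($E_\infty$-, or at least homotopy-) ring spectra compatible with the relevant Thom classes, or, if a more hands-on argument is preferred, by noting that both composites classify the same operation on bordism: sending $[N\to B\bZ/k]$ to the $KO$-index of $\widetilde N$.

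The main obstacle is really just bookkeeping about \emph{which} transfer map is meant and making sure the two transfers ($\tau$ in bordism and $\tau^{\KO}$ in $KO$-theory) are literally compatible, i.e.\ that diagram \eqref{eq:transfer} was set up so that $\tau^{\KO}$ is defined to be the transfer compatible with $\tau$ under $\alpha$ — which the text asserts when it says \eqref{eq:transfer} ``is compatible with the transfer map for $k$-fold coverings of spin manifolds.'' Granting that, there is essentially nothing left to prove beyond the formal cofiber statement. I would therefore keep the proof short: state the square commutes by naturality of the transfer (with a pointer to the standard index-theoretic fact and perhaps to \cite{MR1166518}), and then apply the cofiber functoriality to obtain $\alpha^{\Zk}$ together with the commutativity of both squares in the displayed diagram. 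One minor point worth a sentence: the induced map on cofibers is not unique in general, but any choice works for our purposes, and it is canonical enough that the later constructions (e.g.\ $\ind^{\Zk}$) do not depend on the choice.
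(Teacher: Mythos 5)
Your proposal is correct and is essentially the paper's own argument: the paper justifies the proposition in one line ("a consequence of the naturality of the transfer"), i.e.\ the left square commutes because $\alpha$ is compatible with the covering-space transfer, and the map $\alpha^{\Zk}$ then comes from the standard functoriality of cofibers. Your additional remark about the non-uniqueness of the induced map on cofibers is a fair point of care, but does not change the route.
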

The last step is setting up the correct index map which will give the
obstruction to a psc-metric on a compact manifold with fibered
$\bZ/k$-singularities.  For this we need to recall the construction of
the assembly map (see for example \cite{MR1133900,MR1292018}). This is
much simpler in the case we need here of a finite group
$G$.\footnote{\ The \emph{Baum-Connes assembly map} (for a finite
  group) is a map $KO^G_*(\pt)\to
  KO_*(\bR[G])$, and is an isomorphism. The
  assembly map we use here is the composition of that map with the
  natural map $KO_*(BG) =  KO^G_*(EG) \to  KO^G_*(\pt)$.}
There is a map of spectra
\begin{equation*}
  \KO\wedge BG_+ \xrightarrow{\assemb} \KO(\bR[G]),
\end{equation*}
where $\KO(\bR[G])$ denotes the $K$-theory spectrum of the real group
ring, viewed as a Banach algebra, which can be defined in several
ways.  One method, developed by Loday \cite{MR0447373} in a slightly
different context, is to use the map on classifying spaces induced by
the inclusion $G\hookrightarrow GL(\bR[G])$.  An alternative is to
think of assembly as an index map.  Given a class in $KO_n(BG)$,
represented, say, by the Dirac operator $\Dirac_M$ on a closed spin
manifold $M^n$ with coefficients in some auxiliary bundle, together
with a map $M\to BG$ classifying a $G$-covering $\widetilde M$ of $M$,
\begin{equation*}
\assemb(\Dirac_M, M\to BG) = \ind^{\bR[G]}\left(\Dirac_{\widetilde M}\right),
\end{equation*}
where the right-hand side is the index of the lifted $G$-invariant
operator on $\widetilde M$ with coefficients in
$\bR[G]$. Since the assembly map for the
trivial group is just the identity map, we can peel this off and
consider also the reduced assembly map
\begin{equation*}
\widetilde{\assemb}\co
\KO\wedge BG \longrightarrow \KO(\widetilde{\bR[G]}),
\end{equation*}
where $\widetilde{\bR[G]}$ is the sum of the simple summands in the
group ring corresponding to all irreducible (real) representations
except for the trivial representation.  When $G$ is cyclic of odd
order, all representations except for the trivial representation are
of complex type, so the groups $KO_*(\widetilde{\bR[G]})$ are
torsion-free while $\widetilde{KO}_*(BG)$ is torsion.
Hence the map $\widetilde{\assemb}$ vanishes
{in homotopy}.  When $G$ is cyclic of even
order, it has exactly two irreducible representations of real type,
the trivial representation and the sign representation.  These are
homomorphisms $G\to O(1)=\{\pm 1\}$.  The remaining representations
are of complex type.  In this case, it is shown in \cite[Theorem
  2.5]{MR1133900} that $\widetilde{\assemb}$ is a split surjection
onto the torsion of $KO_*(\widetilde{\bR[G]})$,
which consists of $\bZ/2$ in each dimension
${*}\equiv 1,\,2\pmod 8$.
\begin{definition}
\label{def:Zkobstrmap}
The \emph{index obstruction map} (which appears in the statement
of Theorem A) is the map
$\ind^{\Zk}\co \Omega^\spZk_*\to KO^{\Zk}_*$ defined by
composing the map $\alpha^{\Zk}$ of Proposition \ref{prop:alphaZk}
with projection onto the inverse image of
the torsion in $\widetilde{KO}_{*-1}(\bR[\bZ/k])$. (Recall
that assembly gives a split surjection from
$\widetilde{KO}_{*-1}(B\bZ/k)$ onto the torsion in
$\widetilde{KO}_{*-1}(\bR[\bZ/k])$, and that we have the short exact
sequence \eqref{eq:KOBzk1}.)
\end{definition}
\subsection{Existence of a psc-metric}
The next step is to prove a ``bordism theorem'' for psc-metrics on
spin manifolds with fibered $\bZ/k$-singularities, which will give a
sufficient condition for such a manifold (under the conditions that
$M$ and $\partial M$ are connected and simply connected) to admit a
psc-metric.
\begin{theorem}[Cf.\ {\cite[Theorem 7.4(1)]{MR1857524}}]
\label{thm:Zkbordism}
Let $M$ be a spin manifold with non-empty fibered
$\bZ/k$-singularities, $\dim M =n \ge 6$. Assume that both $M$ and $\p M$
are connected and simply connected, and $\bZ/k$ preserves the spin
structure on $\p M$. If the bordism class
$[M]\in \Omega_n^\spZk$ contains a representative $M'$ admitting a
psc-metric, then $M$ also admits a psc-metric.\footnote{\ Note that
  the representative $M'\in [M]$ could be a manifold with empty
  singularity, but that $M$ has to have $\p M\neq \emptyset$.}
\end{theorem}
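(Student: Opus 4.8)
The strategy is the standard "surgery + bordism" argument for psc-metrics, carried out in the category of manifolds with fibered $\bZ/k$-singularities, and it follows the pattern of \cite[Theorem 7.4]{MR1857524}. The key point is that the surgery techniques of Gromov–Lawson, Schoen–Yau, and especially Gajer (for manifolds with boundary), together with their equivariant refinements, allow one to propagate a psc-metric across a bordism provided all the surgeries involved are of codimension $\ge 3$; under the simple-connectivity hypotheses one can always arrange this. So the plan is: first reduce to a statement purely about the bordism $\overline M\co M'\rightsquigarrow M$, then perform surgery on $\overline M$ to make it a sequence of "elementary" bordisms to which the psc-metric can be extended step by step.

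\emph{Step 1 (setup).} By hypothesis there is a spin manifold $\overline M$ with fibered $\bZ/k$-singularities and boundary $\delta\overline M = M\sqcup -M'$, where $M'$ carries a psc-metric. Recall the structure of $\overline M$: its ordinary boundary splits as $\p\overline M = \delta\overline M\cup \p_1\overline M$ with a free $\bZ/k$-action on $\p_1\overline M$, and $\p_1\overline M\to\beta\overline M$ is a $k$-fold covering restricting over $M$ and $M'$ to the given coverings $\p M\to\beta M$, $\p M'\to\beta M'$. The underlying regular bordism $\overline M$ (forgetting the singular structure) is an ordinary spin bordism-with-corners, and separately $\beta\overline M$ is a spin bordism $\beta M\rightsquigarrow\beta M'$. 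I will want to replace $\overline M$ by a "nicer" bordism without changing its two ends $M$, $M'$ or the singular data on them.

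\emph{Step 2 (improving the bordism by surgery in the interior).} Using handle decompositions and surgery on the interior of $\overline M$, arrange that the inclusions $M\hookrightarrow\overline M$ and $\beta M\hookrightarrow\beta\overline M$ are $2$-connected, and likewise for $M'$, $\beta M'$ — this is possible because $M,\p M$ (hence $\beta M$) are connected and simply connected, $n\ge 6$, and spin surgery below the middle dimension is unobstructed; one does the surgeries on $\beta\overline M$ first and then extends them to the total space $\p_1\overline M$ equivariantly (surgery on $\beta\overline M$ pulls back to $\bZ/k$-equivariant surgery on the cover) and then fills in over $\overline M$. Thus, after replacing $\overline M$, one may assume $\overline M$ is built from $M\times[0,1]$ by attaching handles of index $\ge 3$ only (and similarly $\beta\overline M$ from $\beta M\times[0,1]$ with handles of index $\ge 3$, compatibly). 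Equivalently, $\overline M$ is a composition of elementary bordisms each given by a single surgery of codimension $\ge 3$, in the ambient manifold and — over the singular stratum — in $\beta$, with the surgeries over $\beta$ of codimension $\ge 3$ there as well.

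\emph{Step 3 (propagating the psc-metric).} Now run the Gajer/Gromov–Lawson surgery machine "from the $M'$ end." One has a psc-metric on $M'$ which, restricted to $\p M'$, is $\bZ/k$-invariant (product near $\p(\p_1)$, etc. as in the definition of psc-metric on such objects). Each elementary bordism in the decomposition of $\overline M$ consists of: a codimension-$\ge 3$ surgery in the "interior" part, and/or a codimension-$\ge 3$ surgery over $\beta$, which lifts to a $\bZ/k$-equivariant codimension-$\ge 3$ surgery on the covering $\p_1$. By the surgery theorem for psc-metrics on manifolds with boundary (Gajer), applied on the bulk, and its $\bZ/k$-equivariant version (the $\bZ/k$-action being free, this is just the ordinary surgery theorem applied downstairs on $\beta$ and then pulled up) applied on the boundary, the psc-metric extends across each elementary bordism, remaining a product and $\bZ/k$-invariant where required. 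Concatenating, the psc-metric on $M'$ extends to all of $\overline M$, hence restricts to a psc-metric on the other end $M$. This is the desired metric.

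\emph{The main obstacle.} The delicate point is not the interior surgeries — those are classical — but making the whole argument \emph{compatible with the fibered-singularity structure simultaneously at the total-space level and at the $\beta$ level}, i.e.\ ensuring that when one does surgery on $\beta\overline M$ and lifts it to an equivariant surgery on $\p_1\overline M$, the resulting metric is still a product near the corner $\p(\delta\overline M)$ and still transitions correctly to the bulk metric on $\overline M$. Concretely one must track the psc-metric on the manifold-with-corners $\overline M$ through each step, keeping it a product metric near each face and near the corner, and keeping the $\bZ/k$-invariance on the face $\p_1\overline M$; this bookkeeping, together with the verification that the requisite surgeries can indeed all be taken of codimension $\ge 3$ on both $\overline M$ and $\beta\overline M$ compatibly (which uses $n\ge 6$ so that $\dim\beta\overline M\ge \dim\beta M = n-1\ge 5$ and the simple-connectivity of $M$ and $\p M$), is the technical heart. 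Everything else is an application of known surgery theorems for psc-metrics.
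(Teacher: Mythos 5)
Your proposal matches the paper's approach: the paper's proof is a one-line reference to \cite[Theorem 7.4(1)]{MR1857524}, stating that one uses spin surgeries either in the interior of $M'$ or on $\beta M'$ (then lifted equivariantly to $\partial M'$), which is exactly the surgery-and-bordism scheme you lay out. Your more detailed outline — making the inclusions $2$-connected, decomposing the bordism into codimension-$\ge 3$ handles compatibly at the bulk and Bockstein levels, and propagating the metric via the Gromov--Lawson/Gajer surgery theorems — is the correct elaboration of that citation.
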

\begin{proof}
  The proof is essentially the same as that of
  {\cite[Theorem 7.4(1)]{MR1857524}}, using spin surgeries either
  in the interior of $M'$ or on $\beta M'$ (and then lifted
  to $\partial M'$).
\end{proof}
Consider the following commutative diagram:
\begin{equation}
\label{eq:pscbordismZk}
\xymatrix{
  \Omega^{\spinpsc}_n \ar^(.4)i[r] \ar@{^{(}->}[d] & \Omega^{\spinpsc, \,\bZ/k\fb}_n
  \ar^\beta[r] \ar@{^{(}->}[d] & 
\Omega^{\spinpsc}_{n-1} (B\bZ/k)\ar@{^{(}->}[d]\\
\Omega^{\spin}_n \ar^(.4)i[r] & \Omega^{\spin, \,\bZ/k\fb}_n \ar^\beta[r] & 
\Omega^{\spin}_{n-1} (B\bZ/k),}
\end{equation}
where the bottom row is exact by Proposition \ref{prop:alphaZk}, and the
top row consists of the subgroups of the groups on the bottom that are
generated by manifolds admitting psc-metrics.  It is easy to see that Theorem
\ref{thm:Zkbordism} implies the following
\begin{corollary}
\label{cor:pscbordismZk}
Let $M$ satisfy the condition of \textup{Theorem \ref{thm:Zkbordism}}.
Then it admits a psc-metric if and only if its bordism class $[M]\in
\Omega^{\spin, \,\bZ/k\fb}_n $ lies in the subgroup $\Omega^{\spinpsc,
  \,\bZ/k\fb}_n \subseteq \Omega^{\spin, \,\bZ/k\fb}_n $.
\end{corollary}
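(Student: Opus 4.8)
The plan is to deduce both directions of the corollary directly from the bordism theorem, Theorem~\ref{thm:Zkbordism}; the only step with any content is a disjoint-union trick turning a ``virtual'' psc representative into an honest one.

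The forward implication is immediate from the definitions. By construction (see the top row of \eqref{eq:pscbordismZk}), $\Omega^{\spinpsc,\,\bZ/k\fb}_n$ is the subgroup of $\Omega^{\spin,\,\bZ/k\fb}_n$ generated by bordism classes of spin manifolds with fibered $\bZ/k$-singularities admitting psc-metrics. So if $M$ itself carries a psc-metric, then $[M]$ is one of these generators and hence lies in $\Omega^{\spinpsc,\,\bZ/k\fb}_n$; no hypothesis on $M$ beyond being such a manifold is needed for this direction.

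For the converse, suppose $[M]\in \Omega^{\spinpsc,\,\bZ/k\fb}_n$. First I would write $[M]=\sum_{i}\varepsilon_i\,[M_i]$ in $\Omega^{\spin,\,\bZ/k\fb}_n$, a finite sum in which each $M_i$ is a spin manifold with fibered $\bZ/k$-singularities admitting a psc-metric and $\varepsilon_i\in\{+1,-1\}$. Then I would set $M':=\bigsqcup_i \varepsilon_i M_i$, where $\varepsilon_i M_i$ denotes $M_i$ with its given spin structure when $\varepsilon_i=+1$ and with the opposite spin structure when $\varepsilon_i=-1$, leaving the free $\bZ/k$-action on the boundary unchanged in both cases. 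Then $M'$ is again a spin manifold with fibered $\bZ/k$-singularities, $[M']=[M]$ in $\Omega^{\spin,\,\bZ/k\fb}_n$ (disjoint union realizes the sum, and reversal of orientation/spin structure realizes the additive inverse), and $M'$ admits a psc-metric, since positive scalar curvature is insensitive to the choice of orientation and a disjoint union of psc-metrics is a psc-metric. Since $M$ satisfies the hypotheses of Theorem~\ref{thm:Zkbordism} --- $M$ and $\p M$ connected and simply connected, $\p M\neq\emptyset$, $\dim M=n\ge 6$, and $\bZ/k$ preserving the spin structure on $\p M$ --- and the class $[M]$ now has a representative $M'$ admitting a psc-metric, that theorem produces a psc-metric on $M$.

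I expect no genuine obstacle here; the one point worth stating carefully is that the additive inverse in $\Omega^{\spin,\,\bZ/k\fb}_*$ is realized by reversing the spin structure while leaving the boundary $\bZ/k$-fibration intact, and that such a change does not affect the existence of a psc-metric --- both routine.
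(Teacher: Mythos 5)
Your proof is correct and is essentially the argument the paper leaves implicit behind its ``it is easy to see'': the forward direction is definitional, and the converse amounts to assembling a single psc representative of $[M]$ from the generators (via disjoint union and orientation/spin reversal, neither of which affects positivity of scalar curvature) and then invoking Theorem~\ref{thm:Zkbordism}. Nothing is missing.
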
  
We will now use the index obstruction map
$\ind^{\Zk}\co \Omega^\spZk_* \to KO^{\Zk}_*$
of Definition \ref{def:Zkobstrmap} in the main existence
theorem for psc-metric on manifolds with fibered
$\bZ/k$-singularities:
\begin{theorem}
\label{thm:Zkexistence}
Let $M$ be a simply connected spin manifold with fibered
$\bZ/k$-singularities, of dimension $n\ge 6$.  Assume that $\partial
M$ is non-empty, connected, and simply connected, and that the action
of $\bZ/k$ preserves the spin structure on $\p M$. If the element 
$\ind^{\Zk}([M]) \in KO_n^{\Zk}$
vanishes, then $M$ admits a psc-metric.
\end{theorem}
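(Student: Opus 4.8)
The plan is to reduce, via the bordism theorem (Theorem~\ref{thm:Zkbordism}) and Corollary~\ref{cor:pscbordismZk}, to a purely homotopy-theoretic statement about the kernel of $\ind^{\Zk}$, and then to realize every element of that kernel by manifolds with fibered $\bZ/k$-singularities carrying psc-metrics. By Corollary~\ref{cor:pscbordismZk}, it suffices to show that if $\ind^{\Zk}([M])=0$ in $KO_n^{\Zk}$, then $[M]$ lies in the subgroup $\Omega^{\spinpsc,\,\bZ/k\fb}_n$. So the real content is: $\ker\bigl(\ind^{\Zk}\bigr)\subseteq \Omega^{\spinpsc,\,\bZ/k\fb}_n$. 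To attack this, I would use the cofiber sequence of Proposition~\ref{prop:BZkcofib} and the induced short exact sequence \eqref{eq:MspinBzk}, together with its $KO$-theoretic analog \eqref{eq:KOBzk1} and the compatibility of $\alpha^{\Zk}$ with both (Proposition~\ref{prop:alphaZk}).

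The argument then splits into two pieces, following the structure $0\to\Omega^{\spin}(\bZ/k)_n\to\Omega^{\spZk}_n\xrightarrow{\beta}\widetilde\Omega^{\spin}_{n-1}(B\bZ/k)\to 0$. First, suppose $\beta([M])=0$, so $[M]$ comes from the ``$\bZ/k$-coefficient'' spin bordism group $\Omega^{\spin}(\bZ/k)_n$; here the obstruction $\ind^{\Zk}([M])$ lives in the torsion-coefficient part $KO(\bZ/k)_n$, essentially $KO_n\otimes\bZ/k$ glued to $\Tor$-terms, and I would invoke Stolz's theorem that $\ker(\alpha\co\Omega^{\spin}_*\to ko_*)$ is the image of the $\HP^2$-transfer $T_\bullet\co\Omega^{\spin}_*(B\PSp(3))\to\Omega^{\spin}_{*+8}$ — since $\HP^2$-bundles carry fiberwise psc-metrics by~\cite{MR0358841}, and Theorem~\ref{thm:Zkbordism} transports psc-metrics across bordism, such classes are in $\Omega^{\spinpsc}_n$, hence in $\Omega^{\spinpsc,\,\bZ/k\fb}_n$; the coefficient-$\bZ/k$ version of this needs a relative version of Stolz's argument applied to the mapping cone, which I would run through the diagram \eqref{eq:pscbordismZk}. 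Second, for the general case, given $[M]$ with $\ind^{\Zk}([M])=0$, I would note $\beta([M])\in\widetilde\Omega^{\spin}_{n-1}(B\bZ/k)$ maps to $\beta^{\KO}(\ind^{\Zk}([M]))=0$ in $\widetilde{KO}_{n-1}(B\bZ/k)$ only up to the split-surjective assembly map onto torsion — so the definition of $\ind^{\Zk}$ (projecting onto the inverse image of the torsion in $\widetilde{KO}_{*-1}(\bR[\bZ/k])$) is precisely what guarantees that $\beta([M])$ lies in the kernel of assembly, and by the surgery-theoretic psc-results for manifolds mapping to $B\bZ/k$ (the obstruction-theory references \cite{MR866507,MR842428,MR1133900,MR2093079}) any such $N\to B\bZ/k$ bounds, after surgery, a spin psc-manifold; lifting a psc-metric on such an $N$ to the $k$-fold cover $\widetilde N$ gives a $\bZ/k$-invariant psc-metric on $\partial M$, reducing $[M]$ modulo $\im(i)$ to a class to which the first case applies.

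Concretely, I would: (1) invoke Corollary~\ref{cor:pscbordismZk} to reduce to showing $\ker\ind^{\Zk}\subseteq\Omega^{\spinpsc,\,\bZ/k\fb}_n$; (2) chase the commutative diagram \eqref{eq:pscbordismZk}, using the splitting \eqref{eq:KOBzk1} and Definition~\ref{def:Zkobstrmap}, to reduce the vanishing of $\ind^{\Zk}([M])$ to the statement that $\beta([M])$ lies in $\ker(\widetilde{\assemb})$ and that the ``closed'' part of $[M]$ lies in $\ker\alpha$; (3) apply the $\bZ/k$-coefficient version of Stolz's $\HP^2$-bundle transfer theorem (via the cofiber-sequence description \eqref{eq:3} and the consistency of $\alpha^{\Zk}$ with it) to represent the closed part by an $\HP^2$-bundle, which carries a psc-metric; (4) for the $\beta$-part, use the classical psc obstruction theory over $B\bZ/k$ plus the fact that a psc-metric on the base lifts to a $\bZ/k$-invariant psc-metric on the total space of a covering, producing a psc-filling with fibered $\bZ/k$-singularities; (5) combine (3) and (4) via the exact sequence \eqref{eq:MspinBzk} to conclude $[M]\in\Omega^{\spinpsc,\,\bZ/k\fb}_n$, and apply Corollary~\ref{cor:pscbordismZk}. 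The main obstacle I anticipate is step (3): extending Stolz's $\HP^2$-bundle argument from closed spin bordism to the $\bZ/k$-coefficient (mapping-cone) setting, keeping careful track that the transfer $T_\bullet$ is compatible with the Bockstein $\beta$ and the splitting \eqref{eq:3}, so that the realized psc-manifold genuinely has the prescribed fibered $\bZ/k$-singularity type; the torsion class (order $2$) in $KO_*(\widetilde{\bR[\bZ/k]})$ for $k$ even is exactly what forces us to pass to $\ind^{\Zk}$ rather than $\alpha^{\Zk}$, and handling that summand correctly is the delicate point.
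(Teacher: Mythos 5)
Your overall architecture matches the paper's: reduce via Corollary \ref{cor:pscbordismZk} to showing $\ker\ind^{\Zk}\subseteq\Omega^{\spinpsc,\,\bZ/k\fb}_n$, split along the exact sequence \eqref{eq:MspinBzk}, handle the $\Omega^{\spin}(\bZ/k)_n$-part with Stolz's $\HP^2$-transfer (which, being induced by a map of $\MSpin$-module spectra, passes to $\bZ/k$-coefficients without the difficulty you anticipate in step (3)), and use the definition of $\ind^{\Zk}$ to see that the only surviving obstruction on the Bockstein side is the torsion detected by assembly. But your step (4) has a genuine gap. From the fact that $\beta([M])=[N\to B\bZ/k]$ is killed by the torsion part of assembly you conclude that $N$ admits a psc-metric and lift it to a $\bZ/k$-invariant psc-metric on the cover $\widetilde N$; this only produces a psc-metric on the \emph{boundary} of a candidate filling, not a psc-metric on a manifold with fibered $\bZ/k$-singularities whose Bockstein is $N$. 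Those are very different statements --- the gap between them is exactly what relative-index obstructions measure (compare Example \ref{ex:K3}(ii) in the $S^1$ case, where $\p M$ and $\beta M$ carry standard psc-metrics but no psc filling exists). What Corollary \ref{cor:pscbordismZk} requires is a representative $M'$ of $[M]$ that itself carries psc as a manifold with fibered singularities, so you must exhibit an explicit psc \emph{filling} for each relevant class in $\widetilde\Omega^{\spin}_{n-1}(B\bZ/k)$; you assert one exists but give no construction.

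The paper closes this gap by pinning down which classes survive and capping them geometrically. For $k$ odd the Atiyah--Hirzebruch spectral sequence collapses and $\widetilde\Omega^{\spin}_{*}(B\bZ/k)$ is generated by products $L^{2m+1}\times N^{4p}$ with $L^{2m+1}$ a lens space of dimension $\ge 3$. For $k=2^r$ one uses that the periodization map $ko_{n-1}(B\bZ/2^r)\to KO_{n-1}(B\bZ/2^r)$ is injective, that the $\bZ/2$-summands of $\widetilde{KO}_{n-1}(B\bZ/2^r)$ (degrees $\equiv 2,3 \bmod 8$) are exactly those detected by assembly and hence killed by your hypothesis, and that the remaining $\bZ/2^\infty$-summands are represented by lens spaces and lens-space bundles over $S^2$ \cite{MR1484887}. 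Each such generator is then capped explicitly: $\bigl(D^{2m+2}\times N^{4p},\ S^{2m+1}\times N^{4p}\to L^{2m+1}\times N^{4p}\bigr)$, with the round hemispherical metric on the disk factor (and disk bundles over $S^2$ when $n\equiv 2\bmod 4$), is a psc manifold with fibered $\bZ/k$-singularities realizing the generator under $\beta$. Subtracting these reduces to the image of $\Omega^{\spin}(\bZ/k)_n$, where your step (3) applies. Without this identification of geometric generators and their explicit psc cappings, the argument does not go through.
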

\begin{proof}
First suppose $k$ is odd. By Proposition \ref{prop:KOBzk},
$KO^{\Zk}_n$ vanishes when $n$ is odd, so in this case
$\ind^{\Zk}([M])$ automatically vanishes.  But in this case
$\Omega^{\spin}_n(\bZ/k)$ and $\widetilde{\Omega}^\spin_{n-1}(B\bZ/k)$
also vanish, so the result follows from Corollary \ref{cor:pscbordismZk}.

Now suppose $k$ is odd and $n$ is even.    Since the homology groups
$\widetilde{H}_*(B\bZ/k, \bZ)$ are concentrated in odd degrees, the
Atiyah-Hirzebruch spectral sequence for
$\widetilde{\Omega}^{\spin}_*(B\bZ/k)$ collapses and these
groups are also concentrated in
odd degrees, where all classes are linear combinations of generators
of the form $L^{2m+1}\times N^{4p}$, where $L^{2m+1}$ denotes a lens
space of dimension $\ge 3$. (There is a little trick here for getting
rid of generators of the form $S^1\times N^{4p}$ --- see
\cite[proof  of Theorem 1.3]{MR842428} and \cite{MR1484887}.)  But
$\bigl(D^{2m+2}\times N^{4p}, S^{2m+1}\times N^{4p}\to L^{2m+1}\times N^{4p}\bigr)$
is a spin manifold with fibered $\bZ/k$-singularities and positive
scalar curvature.  So subtracting off suitable {\psc} classes from
$[M]$, we can reduce to the case where $\beta([M])$ is trivial
in bordism, and thus (by the exact sequence \eqref{eq:MspinBzk})
that $[M]$ lies in the image of $\Omega^\spin(\bZ/k)_n$ under $i$.
Now recall that $\Omega^{\spin}_*$ is
a direct sum of $ko_*$, detected by the Atiyah-Hitchin obstruction
$\alpha$, and the image of the transfer map
\begin{equation*}
T\co\Omega^\spin_*(B\PSp(3))\to \Omega^\spin_{*+8},
\end{equation*}
representing total spaces of $\bH\bP^2$-bundles with {\psc}. Since
$\ind^{\Zk}([M])=0$ and $\ind^{\Zk}$ is faithful on
$ko_*\otimes (\bZ/k)$, the component of $[M]$ in $KO(\bZ/k)_n$ vanishes,
and so $[M]$ in $\Omega^{\spin}(\bZ/k)_n$ is
represented by a manifold of {\psc}.  This completes the proof when
$k$ is odd.
%\vspace{2mm}

Now consider the case $k=2^r\cdot s$, where $r\ge 1$ and $s$ is odd.  The
situation with the odd torsion is exactly the same as before, so it is
no loss of generality to assume $k=2^r$.  Only the torsion behaves
differently than when $k$ is odd, so we can localize at $2$.  After
doing so, the spectrum $\MSpin$ splits (additively, not
multiplicatively) as a direct sum
\begin{equation*}
\MSpin_{(2)}= \ko \vee \mathsf{M},
\end{equation*}
where $\mathsf{M}$ consists of ``higher
summands'', i.e., suspensions of $\ko$ and $\ko\langle 2\rangle$ and
Eilenberg-Mac Lane spectra $\sH(\bZ/2)$, see \cite{MR0219077}.
Then as was shown by Stolz (see \cite{MR1189863,MR1259520}),
the spectrum $\mathsf{M}$ is in the image of the transfer map
\begin{equation*}
T\co\MSpin\wedge \Sigma^8 B\PSp(3)_+ \to \MSpin\,.
\end{equation*}
We proceed as before, assuming that $\ind^{\Zk}([M])=0$ and looking
at the image of $[M]$ in $\widetilde{\Omega}^{\spin}_{n-1}(B\bZ/k)$ in the
exact sequence \eqref{eq:MspinBzk}.  This is a sum of
$\widetilde{ko}_{n-1}(B\bZ/k)$ and a summand coming from
$\mathsf{M}$, represented by total spaces of $\bH\bP^2$-bundles
with {\psc}.  This second summand is the image under $\beta$
of $\bH\bP^2$-bundles over manifolds with fibered $\bZ/k$-singularities,
so these admit {\psc}.  Subtracting these off, we may assume that
the image of $[M]$ lies in $\widetilde{ko}_{n-1}(B\bZ/k)$.
The periodization map $ko_{n-1}(\bZ/k) \to KO_{n-1}(\bZ/k)$ will turn out to
be injective, so we can look at periodic $K$-homology instead.

Let's look at $\widetilde{KO}_{n-1} (B\bZ/2^r)$ in more detail.
By \cite[Theorem 2.5]{MR1133900}, this group is a direct sum of $2^{r-1}-1$
copies of the divisible group $\bZ/2^\infty = \bZ[\frac12]/\bZ$
for all even $n$, with one more copy of $\bZ/2^\infty$ when $n$ is
divisible by $4$, and a copy of $\bZ/2$ when
$n\equiv 2,\,3\pmod 8$.\footnote{\ All of this comes from dualizing the
  Atiyah-Segal Theorem and looking at the decomposition of the
  group ring $\bR[\bZ/2^r]\cong \bR^2\oplus \bC^{2^{r-1}-1}$.}
The copies of $\bZ/2$ map nontrivially under the Baum-Connes assembly map,
and lie in the image of corresponding summands in 
$\widetilde{ko}_{n-1}(B\bZ/2^r)$, by \cite[Lemma 2.8]{MR1484887}.
The analysis of the Atiyah-Hirzebruch spectral sequence
in the proof of that lemma shows in fact that the ``periodization'' map
\begin{equation*}
ko_{n-1} (B\bZ/2^r)\to KO_{n-1} (B\bZ/2^r)
\end{equation*}
is injective in all degrees.
So if $[M]$ is in the kernel of $\ind^{\bZ/2^r}$
we can assume $M$ represents a bordism class corresponding to
one of the $\bZ/2^\infty$ summands in $\widetilde{KO}_{n-1}(B\bZ/2^r)$.
By \cite[\S5]{MR1484887}, that means we can assume $\beta M$
is represented by a linear combination of lens spaces (when
$n\equiv 0\pmod4$) or lens spaces bundles over $S^2$ (when
$n\equiv 2\pmod4$).

First suppose $\beta M$ is represented by a lens space.  This case
is very much like the corresponding step in the case with $k$ odd.
We can take a disk $D^n$, equipped
with the constant-curvature metric from
the upper hemisphere in $S^n$, and $\beta M$ can then be identified
with the quotient of $\partial D^n = S^{n-1}$ by an isometric
$\bZ/k$-action.  So subtracting this off from the bordism class of $M$,
we can reduce to the case where $M$ is a spin boundary and admits
{\psc}.  This covers the case $n\equiv 0\pmod4$.  When $n\equiv 2\pmod4$,
the situation is only slightly more complicated.  Because of
\cite{MR1484887}, we proceed as in the case $n\equiv 0\pmod4$ but with
disks replaced by disk bundles over $S^2$.  These still have {\psc}
so again we can subtract them off and assume the class $[M]$
lies in the image of $\Omega^\spin(\bZ/2^r)_n$.  Again this group
splits as a direct sum of $ko(\bZ/2^r)_n$ and a piece coming
from the spectrum $\mathsf{M}$, represented by total spaces
of $\bH\bP^2$-bundles with {\psc}.  This latter piece is no problem,
so finally we can assume $[M]$ is in the image of $ko(\bZ/2^r)_n$
and has vanishing index obstruction.  As before that means it
represents a bordism class corresponding to
one of the $\bZ/2^\infty$ summands in $\widetilde{KO}_n(B\bZ/2^r)$,
which come from lens spaces or lens spaces bundles over $S^2$.  These
have {\psc} so we are done.
\end{proof}
\subsection{Obstruction to the existence of a psc-metric} 
\label{sec:Zknec}
Let $M$ be a manifold with fibered $\bZ/k$-singularities as above.
Then the locally compact groupoid $C^*_{\bR}(M;\bZ/k)$ is defined
exactly as in \cite[\S2]{MR2011114}; it has unit space $N$, where
$N=M\cup_{\partial M}{\partial M}\times [0, \infty)$ is $M$ with
  infinite cylindrical ends added to the boundary, and the groupoid
  structure is defined by the equivalence relation $\sim$ which is
  trivial on $M$ itself and given by the action of $P = \bZ/k$
  %\Sigma$
  on ${\partial M}\times (0, \infty)$. Recall that $C^*_{\bR}(M;\bZ/k)$
  is an extension
\begin{equation}
0 \to \Gamma_0((0,\infty)\times \beta M, \cA) \to
C^*_{\bR}(M;\bZ/k) \to C^\bR(M) \to 0,
\label{eq:C*MZk}
\end{equation}
where $\cA$ is a bundle over $(0,\infty)\times \beta M$ with
fibers isomorphic to $M_k(\bR)$ and with trivial Dixmier-Douady class,
and there is also a ``target'' real {\Ca} $C^*_{\bR}(\pt;\bZ/k)$
which is an extension
\begin{equation}
0 \to C_0^\bR((0,\infty), M_k(\bR)) \to
C^*_{\bR}(\pt;\bZ/k) \to \bR \to 0.
\label{eq:C*ptZk}
\end{equation}
The long exact sequence of the extension \eqref{eq:C*ptZk} in 
$KO$-homology\footnote{We're following a common abuse of
terminology and calling $K$-homology or $KO$-homology
the theory that goes by this name on spaces, even though it
is contravariant on {\Ca}s.}  has connecting map
\begin{multline*}
KO_{n-1}(\pt)\cong KO_n((0,\infty))\cong
KO^{-n}(C_0^\bR((0,\infty), M_k(\bR))) %\\
\to
KO^{1-n}(\bR)\cong KO_{n-1}(\pt)
\end{multline*}
induced by the unital ring map $\bR\to M_k(\bR)$, and hence given by
multiplication by $k$.  Thus $KO^{-n}(C^*_{\bR}(\pt;\bZ/k))\cong
KO_n(\pt;\bZ/k)$.  Here are our main results about this situation.
\begin{theorem}[Obstruction Theorem]
Let $M$ be a spin manifold with fibered $\bZ/k$-singularities.  Assume
that the action of $\bZ/k$ on $\partial M$ preserves the induced spin
structure. Then the Dirac operator on $M$ has a well-defined index 
\begin{equation*}
  \ind^{\bZ/k}(M) \in KO^{-n}(C^*_\bR(\pt;\bZ/k))\cong
  KO_{n}(\pt;\bZ/k)
\end{equation*}
which is independent of the choice of Riemannian
metric on $M$.  If $M$ has psc-metric, then this index must vanish.
\label{thm:Zkindex} 
\end{theorem}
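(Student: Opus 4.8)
The plan is to construct $\ind^{\bZ/k}(M)$ by a relative (coarse) index construction on the groupoid {\Ca} $C^*_\bR(M;\bZ/k)$ and then run the standard Lichnerowicz–Schrödinger argument in this setting. First I would note that since the $\bZ/k$-action on $\partial M$ preserves the spin structure, the spin Dirac operator $\Dirac_M$ on the manifold-with-cylindrical-ends $N = M\cup_{\partial M}\partial M\times[0,\infty)$ descends to an (unbounded, odd, Clifford-linear) operator on the groupoid $C^*_\bR(M;\bZ/k)$: on the cylindrical end we use that the $P$-invariant product metric makes $\Dirac$ equivariant, so it defines a $P$-invariant operator on $\partial M\times(0,\infty)$, which is exactly a module over the ideal $\Gamma_0((0,\infty)\times\beta M,\cA)$ in \eqref{eq:C*MZk}. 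The resulting class lives in $KO^{-n}(C^*_\bR(M;\bZ/k))$, and pushing forward along the ``collapse'' map $C^*_\bR(M;\bZ/k)\to C^*_\bR(\pt;\bZ/k)$ (induced by $M\to\pt$ on unit spaces, compatibly with the equivalence relations) produces
\[
\ind^{\bZ/k}(M)\in KO^{-n}(C^*_\bR(\pt;\bZ/k))\cong KO_n(\pt;\bZ/k),
\]
the last identification being the one established just before the theorem via the connecting map of \eqref{eq:C*ptZk} being multiplication by $k$.

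Next I would verify metric-independence. Any two Riemannian metrics on $M$ of the required product-near-the-boundary, $P$-invariant-on-$\partial M$ form are joined by a path of such metrics (convexity of the space of metrics preserves all the constraints), giving a metric of the same type on $M\times[0,1]$, hence an operator over $C^*_\bR(M\times[0,1];\bZ/k)$ whose boundary is the difference of the two index classes; since $M\times[0,1]$ retracts onto $M$ this difference is zero in $KO^{-n}(C^*_\bR(\pt;\bZ/k))$. (Equivalently, one invokes the homotopy invariance of the coarse/relative index as in \cite{MR2011114}.) This is routine once the framework is in place.

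The substantive step is the vanishing when $M$ carries a psc-metric. Given such a metric $g$ — which, being a product $dr^2+g_{\partial M}$ in a collar with $g_{\partial M}$ $P$-invariant — the Lichnerowicz formula $\Dirac^2 = \nabla^*\nabla + \tfrac14\,\jsc(g)$ holds on all of $N$ (the scalar curvature is the honest scalar curvature on $M$ and is transported $P$-invariantly onto the cylindrical end), and $\jsc(g)\ge c>0$ uniformly because $M$ is compact and the end is a product. Hence $\Dirac$ is invertible as an unbounded operator on the Hilbert module over $C^*_\bR(M;\bZ/k)$: $\Dirac^2\ge \tfrac{c}{4}$, so $0$ is not in the spectrum, and therefore its index class in $KO^{-n}(C^*_\bR(M;\bZ/k))$ vanishes; pushing forward, $\ind^{\bZ/k}(M)=0$. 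The one point requiring care — and the main obstacle I anticipate — is the functional-analytic bookkeeping on the ideal $\Gamma_0((0,\infty)\times\beta M,\cA)$: one must check that $\Dirac$ restricted to the end genuinely defines a regular self-adjoint operator on the Hilbert module built from this $M_k(\bR)$-bundle with trivial Dixmier–Douady class, so that the spectral gap statement $\Dirac^2\ge \tfrac{c}{4}$ is meaningful there and its invertibility forces the class to vanish. This is handled exactly as the scalar-curvature-rigidity arguments for manifolds with Baas singularities in \cite{MR2011114}, to which I would refer for the detailed estimates rather than reproducing them.
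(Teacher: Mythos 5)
Your proposal is correct and follows essentially the same route as the paper, which simply invokes \cite[Definition 4.2]{MR2011114} and \cite[Theorem 5.3]{MR2011114} (noting only that the splitting of $p\co\partial M\to\beta M$ assumed there is never actually used); your outline is a faithful expansion of that construction, deferring the same functional-analytic details to the same reference.
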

\begin{proof}
This is essentially \cite[Definition 4.2]{MR2011114} together with
\cite[Theorem 5.3]{MR2011114}, except for the fact
that the covering map $p\co \partial M\to \beta M$ need not split.
If one looks at the proof given there, the splitting is never used,
so this is not a problem.
\end{proof}
\subsection{Proof of Theorem A}
Now we can put all the pieces together.  Recall the statement:
%that we are
%trying to prove:
\begin{theorem}[Theorem A]
Let $M$ be a spin manifold with fibered $\bZ/k$-singularities, $\dim M
= n\geq 6$. Assume that $M$ and $\partial M$ are both non-empty and
simply connected, and that $\bZ/k$ preserves the spin structure
on $\p M$.  Then the vanishing of the
index obstruction $\ind^{\bZ/k}(M)$ is necessary
and sufficient for $M$ to admit a psc-metric.
\label{thm:thmA} 
\end{theorem}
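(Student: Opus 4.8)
The plan is to assemble the pieces that are already in place. For \emph{necessity}, suppose $M$ carries a psc-metric. By definition it is a product metric near $\partial M$ whose restriction to $\partial M$ is $\bZ/k$-invariant; since $\partial M \to \beta M$ is a covering map, this restriction descends to a psc-metric on $\beta M$. Hence the Rosenberg ($\bR[\bZ/k]$-valued) index of the Dirac operator on $\beta M$, paired with the classifying map $\beta M \to B\bZ/k$ of the covering, vanishes, so in particular the torsion component of $\beta^{\KO}\bigl(\alpha^{\Zk}([M])\bigr)$ is zero. At the same time, the Obstruction Theorem \ref{thm:Zkindex} shows that the analytic index $\ind^{\bZ/k}(M)\in KO^{-n}(C^*_\bR(\pt;\bZ/k))\cong KO_n(\pt;\bZ/k)$ of the Dirac operator on $M$ vanishes. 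Identifying $KO_n(\pt;\bZ/k)$ with the subgroup $KO(\bZ/k)_n=\ker\beta^{\KO}\subseteq KO^{\Zk}_n$ in the exact sequence \eqref{eq:KOBzk1}, and recalling that $\ind^{\Zk}([M])$ was defined in Definition \ref{def:Zkobstrmap} as the projection of $\alpha^{\Zk}([M])$ onto the preimage of the torsion of $\widetilde{KO}_{n-1}(\bR[\bZ/k])$, these two vanishing statements together say exactly that $\ind^{\Zk}([M])=0$.

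For \emph{sufficiency}, suppose $\ind^{\Zk}([M])=0$ in $KO^{\Zk}_n$. Since $n\ge 6$ and $M$ and $\partial M$ are connected and simply connected with the $\bZ/k$-action preserving the spin structure on $\partial M$, Theorem \ref{thm:Zkexistence} applies directly and produces a psc-metric on $M$. Granting that under the hypotheses of the theorem the obstruction named $\ind^{\bZ/k}(M)$ is precisely the class $\ind^{\Zk}([M])$ of Definition \ref{def:Zkobstrmap} (as spelled out in the previous paragraph), this combination of the two directions is purely formal and finishes the proof.

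The one point that requires genuine care — and the step I expect to be the main obstacle — is the identification used in the necessity half: that the analytic index $\ind^{\bZ/k}(M)$ produced by the groupoid {\Ca} $C^*_\bR(M;\bZ/k)$ agrees with the $\ker\beta^{\KO}$-component of the topological class $\alpha^{\Zk}([M])$. Both quantities are spin-bordism invariants of manifolds with fibered $\bZ/k$-singularities — the first by the metric-independence and relative-index arguments of \cite{MR2011114}, the second by construction — so it is enough to compare them on a set of bordism generators, which one does exactly as in the Baas-singularity setting of \cite{MR1857524}. The accompanying bookkeeping, that the part of $KO^{\Zk}_n$ invisible to index theory is precisely the union of $\bZ/2^\infty$-summands of $\widetilde{KO}_{n-1}(B\bZ/k)$ realized by lens spaces and lens-space bundles over $S^2$ (all of which carry psc representatives), is supplied by the split surjectivity of the real assembly map onto the torsion of $KO_{n-1}(\bR[\bZ/k])$ and the injectivity of the periodization map $ko_*(B\bZ/2^r)\to KO_*(B\bZ/2^r)$, both already invoked in the proof of Theorem \ref{thm:Zkexistence}.
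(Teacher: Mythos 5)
Your proof is correct and follows essentially the same route as the paper's: sufficiency is quoted from Theorem \ref{thm:Zkexistence}, and necessity splits into the $KO_n(\pt;\bZ/k)$-component (the Obstruction Theorem \ref{thm:Zkindex}) and the torsion component in $\widetilde{KO}_{*-1}(\bR[\bZ/k])$, handled by descending the boundary metric to $\beta M$ and invoking the Rosenberg index obstruction there. The one point you flag as delicate---matching the groupoid-$C^*$-algebra index with the $\ker\beta^{\KO}$-component of $\alpha^{\Zk}([M])$---is left implicit in the paper, so your explicit acknowledgment of it (and the bordism-generator comparison you sketch) is if anything more careful than the published argument.
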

\begin{proof}
Sufficiency is in Theorem \ref{thm:Zkexistence}.
For necessity, there are two pieces, due to the fact that  
$\ind^{\bZ/k}$ has two components.  The necessity of vanishing
of the first component, in $KO_n(\pt;\bZ/k)$,
is covered by Theorem \ref{thm:Zkindex}.  The necessity of
vanishing of the second component, in $\widetilde{\KO}(\bR[\bZ/k])$,
is simply \cite[Theorem 3.3]{MR842428}
or \cite[Theorem 3.1]{MR1133900}, applied to $\beta M$.
\end{proof}
\begin{remark}
  It is easy to modify the statement and proof to apply to the situation
  where $\p M$ is disconnected, but each component of $\p M$ is
  simply connected.  We leave the details to the reader.
\end{remark}  
\section{The case $P=S^1$}
\label{sec:S1}
\subsection{The goals}
\label{subsec:goals}
In this section we modify the same program for
manifolds with $S^1$-fibered singularities.  Since the cone on $S^1$
is $\bR^2$, in this case the pseudomanifold $M_\Sigma$ is actually a
smooth manifold, but with a distinguished codimension-$2$ submanifold
$\beta M$. Our aim is prove Theorem B, thereby answering a version of
the question in Section \ref{q:main}.  Along the way we will prove
some partial results about cylindrical metrics of {\psc}.
\subsection{The setting}
\label{subsec:Def-Ex}
The following is the exact analogue of the definitions
in Section \ref{sec:Zkdef}, but we want to restrict the metric
to have a particular form near $\beta M$. 
\begin{definition}
\label{def:S1fib}
A compact \emph{manifold with $S^1$-fibered
  singularities} will mean a compact smooth manifold $M$ with
boundary, equipped with a smooth free action of $P = S^1$ on
$\partial M$. We denote by $\beta M$ the quotient space $\partial M/S^1$.
We have a principal $S^1$-bundle $p\co \partial M \to \beta M$
which we identify with the unit circle bundle of
a corresponding complex line bundle $\widetilde p\co L \to \beta M$.
The \emph{associated manifold}
is $M_\Sigma = M/\!\!\sim$, where $\sim$ identifies any two points in
$\partial M$ lying in the same $S^1$-orbit.
\end{definition}
We notice that $M_\Sigma$ is indeed a manifold since the
cone on $S^1$ is $\bR^2$; in fact, we can identify $M_\Sigma$ with the
smooth manifold obtained by gluing the disk bundle $T$ of $L$ to $M$
along their common boundary $\p M\cong \p T$. (Of course we do this in
such a way that the resulting manifold is oriented, so that one should
think of $\p T = -\p M$, and this, rather than $\p M$, is really the
disk bundle of $L$.)
\begin{definition}
  \label{def:cyl-metric}
A \emph{cylindrical} \emph{metric of {\psc}}
on $M$ is a Riemannian metric of {\psc} on $M$, which is
a product metric in a collar neighborhood of the boundary
$\partial M$, and with the metric $S^1$-invariant on $\partial M$.
\end{definition}
Clearly the disk bundle $T$ also serves as a
tubular neighborhood of $\beta M\subset M_\Sigma$, and comes with
the projection map $\tilde p\co T \to \beta M$.
\begin{definition}
\label{def:conicalmetric}  
A \emph{conical}
metric of {\psc} on $M$ is a metric of {\psc} on $M_\Sigma$ which is
$S^1$-invariant on the tubular neighborhood $T$ of $\beta M$, and
which on this tubular neighborhood has the form $(\widetilde
p)^*g_{\beta M} + h$, where $g_{\beta M}$ is a metric of {\psc} on
${\beta M}$ and $h$ is a hermitian metric on the line bundle $L$
over ${\beta M}$, smoothed out near the boundary
$\partial T = - \partial M$
to match a metric of the form $dr^2 + g_{\p M}$ on $M$.
More precisely, we fix a connection on the line bundle $L$, giving
a choice of horizontal tangent space at each point in the total
space of $L$, and make the horizontal and vertical spaces perpendicular,
with the metric $(\widetilde p)^*g_{\beta M}$
on the horizontal tangent space and the metric $h$ on the
vertical tangent space. In a neighborhood of $\p M$,
with $r$ the distance to $\beta M$, the metric will locally look
like $dr^2 + f(r)\,g_{\p M}$, where $f$ is $C^\infty$,
$f(r)\equiv 1$ for $r\ge 1$, and $f(r)$
smoothly transitions to $r^2$ for $r<1-\varepsilon$ for some
small $\varepsilon$.

\end{definition}
This definition is consistent with the idea that a conical metric
should have the approximate form
$dr^2 + r^2g_{S^1} + (\widetilde p)^*g_{\beta M}$ near ${\beta M}$.

We notice that since the metric on a neighborhood of
$\p M$ is either exactly (in the cylindrical case) or approximately
(in the conical case) of the form $g_{\p M} + dr^2$, if $M$ admits a
metric of {\psc} (in either sense), so does $\p M$.  By \cite[Theorem
  C]{BB}, this implies that $\beta M$ has a metric of {\psc}.  \qed

\begin{remark}
\label{rem:BB}  
Since the paper \cite{BB} is published in a rather inaccessible place
(not even indexed by \emph{MathSciNet} or \emph{Zentralblatt}),
for the benefit of the reader, we repeat the essence of the argument
here.  Theorem C from \cite{BB} states that if one
has a closed $n$-manifold $N$ with a free action of $S^1$, giving a
principal $S^1$-bundle $p\co N\to B$, then the
manifold $N$ admits an $S^1$-invariant metric of {\psc} if and only
if $B$ admits {\psc}.  The ``if'' direction is relatively easy given
\cite[Theorem 3.5]{MR0262984}; a metric $g_B$ of {\psc} on $B$ lifts
to an $S^1$-invariant metric $g_N$ on $N$ with totally geodesic
fibers. Let $\kappa_N$ and $\kappa_B$ be the scalar curvatures of $N$
and $B$, respectively.  By the O'Neill formulas \cite{MR0200865},
$\kappa_B = \kappa_N + \Vert A\Vert^2$, where $A$ is the O'Neill
tensor, and rescaling $g_N$ on the circle fibers, one can make $\Vert
A\Vert^2$ small so that $\kappa_N>0$.

For the other direction, assume we have an $S^1$-invariant metric
$g_N$ of {\psc} on $N$, and let $g_B$ be the induced metric on $B$ and
let $\kappa_B$ and $\Delta_B$ be its scalar curvature and
Laplacian. (We use the analysts' sign convention in which $\Delta_B$
has nonpositive spectrum.) We need to have $n\ge 3$, since if $N$ is a
closed $2$-manifold fibering over $S^1$, then $N$ is a torus or Klein
bottle and does not admit {\psc}.  Let $f$ be the function
on $B$ which at $x\in B$ gives the length of the $S^1$-fiber over $x$.
B{\'e}rard-Bergery computes from the O'Neill formulas that if
$\kappa_N>0$, then
\begin{equation*}
  \kappa_B + 2\frac{\Delta_B f}{f} > 0.
\end{equation*}
From this it follows that after making the conformal change
$\widetilde g_B = f^{\frac{4}{n-2}} g_B$
the manifold $(B,\widetilde g_B)$ has  {\psc}. \qed
\end{remark}
\begin{example}
\label{ex:K3}
(i) This example is adapted from \cite[Example 9.2]{BB}.  Let $\beta
M$ be a K3 surface (which is a simply connected spin $4$-manifold with
nonzero $\widehat A$-genus). Then $\beta M$ does not admit a metric of
{\psc}, but there is a circle bundle $\p M$ over $\beta M$ with simply
connected total space $\p M$.  To construct such a bundle, it is
enough to choose a primitive element of
\[
H^2(\beta M,\bZ)\cong \bZ^{22}
\]
for the first Chern class $c_1$.  The manifold $\p M$ is
necessarily spin, since $T_{\p M}\cong p^*T_{\beta M} \oplus V$, where
$V$ is the real tangent line bundle along the circle fibers, which is
trivial, and thus $w_2(T_{\p M}) = p^*w_2(T_{\beta M}) = 0$. $\p M$ is
a spin boundary, since $\Omega^\spin_5=0$.  So choosing $M$ to have
boundary $\p M$, we get a spin $6$-manifold $M$ with $S^1$-fibered
singularities, which we can choose to be simply connected.  By Van
Kampen's Theorem, the associated manifold $M_\Sigma$
is also simply connected.  As will be seen later (cf.\ Remark
\ref{rem:spinstr}), $M_\Sigma$ is spin$^c$ but not spin, and it admits
a metric of {\psc} by \cite[Theorem C]{MR577131}, but $\beta M$ does
not. This gives a nontrivial example where the singular manifold
$M_\Sigma$ admits a metric of {\psc} but $M$ itself does not admit
{\psc} (either conical or cylindrical) in the sense of Definition
\ref{def:S1fib}.
\vspace{1mm}

(ii) We can also construct a ``complementary'' example where $\beta M$
admits a metric of {\psc}, and $\p M$ admits an $S^1$-invariant metric
of {\psc}, but $M$ does not admit {\psc}
(either conical or cylindrical) in the sense of Definition
\ref{def:S1fib}.  Take $\Sigma^{10}$ to be an exotic $10$-sphere with
$\alpha(\Sigma)$ nontrivial in $KO_{10}\cong \bZ/2$. Cut out a disk and
let $M = \Sigma \smallsetminus \mathring{D}^{10}$.  Then
$\p M=S^9$, which of course has a free $S^1$ action with quotient
space $\beta M = \bC\bP^4$.  So one can glue in a standard
tubular neighborhood of $\bC\bP^4$ to get $M_\Sigma$ with the
homotopy type of $\bC\bP^5$.  Note that $M_\Sigma$, $M$, $\p M$, and
$\beta M$ are all simply connected, and that $\p M$ and
$\beta M$ admit standard metrics of positive curvature, but that this
metric on $\p M$ cannot extend to a metric of positive scalar
curvature on $M$, since if it did, patching with a standard metric
on $T = D^{10}$ would give a metric of {\psc} on $\Sigma$, which is
ruled out by the $\alpha$-invariant.
\vspace{1mm}

(iii) The following example is one where $M_\Sigma$ admits {\psc} but
there is a torsion obstruction to {\psc} in the sense of Definition
\ref{def:S1fib}, due to failure of $\beta M$ to admit a metric of
    {\psc}.  Start with $\beta M = \Sigma^{10}\# \bC\bP^5$, where
    $\Sigma^{10}$ is a homotopy $10$-sphere with nonzero
    $\alpha$-invariant (i.e., representing the generator of
    $KO_{10}$).  Since the $\alpha$-invariant is additive on connected
    sums, $\beta M$ does not admit a metric of {\psc}.  However, the
    manifold $\beta M$ is a fake complex projective space, so it
    admits a principal $S^1$-bundle for which the total space $\p M$
    is a homotopy $11$-sphere.  There being no torsion in
    $\Omega^\spin_{11}$, the exotic sphere $\p M$ is a spin boundary
    and we can choose a spin $12$-manifold $M$ with $S^1$-fibered
    singularities having boundary this circle bundle over $\beta
    M$. By \cite[Theorem C]{BB}, there is no $S^1$-invariant metric of
    {\psc} on $\p M$.  \qed
\end{example}
\begin{remark}
\label{rem:spinstr}
Let $M$ be a spin manifold with $S^1$-fibered singularities as in
Definition \ref{def:S1fib}. Assume for simplicity that $\partial M$
is connected.  Since $M$ is a compact spin manifold
with boundary, $\partial M$ carries a natural spin structure.
The principal bundle $p\co \partial M\to \beta M$ is the circle
bundle associated to a complex line bundle
$\widetilde p\co L \to \beta M$.  The tangent bundle of $\partial M$
splits as the direct sum of $p^*(T(\beta M))$ and the real line
bundle along the circle fibers, which is trivial.  Thus
$0 = w_2(\partial M) = p^*(w_2(\beta M))$.  So $w_2(\beta M)\in \ker p^*$,
which by the Gysin sequence is the $\bF_2$-span of $c_1(L)$ reduced
mod $2$.  Hence either $\beta M$ is spin, or else $L$ is nontrivial
and $w_2(\beta M) = c_1(L)\!\!\mod 2$.

Note that the tangent bundle of the tubular neighborhood $N$ of
$\beta M$ coincides with the direct sum
$\widetilde p^*T(\beta M)\oplus \widetilde p^*L$, and that $N$ has a
deformation retraction down to $\beta M$. By the additivity formula
for Stiefel-Whitney classes, plus the fact that for a complex line
bundle viewed as a real $2$-plane bundle,
$c_1$ reduces mod $2$ to $w_2$, we see that $N$ (and hence also $M_\Sigma$) is
spin exactly when $w_2(\beta M)= c_1(L)\!\!\mod 2$.  Furthermore, if
$\p M$ and $\beta M$ are both simply connected, then from the long
exact homotopy sequence
\[
0\to \pi_2(\p M) \xrightarrow{p_*} \pi_2(\beta M) \xrightarrow{\p}
\pi_1(S^1) = \bZ\to 0,
\]
we see that $c_1(L)\in H^2(\beta M; \bZ) \cong \Hom(\pi_2(\beta M), \bZ)$
has to be non-zero mod $2$, so $\beta M$ and $M_\Sigma$ cannot both
be spin.  The example given before of $\beta M = \bC\bP^{n-1}$ and
$M_\Sigma = \bC\bP^{n}$ is instructive in this regard.

The fact that $\beta M$ and $M_\Sigma$ cannot both be spin in this
case, whereas they will both be spin when the link
$P$ is higher-dimensional,
can be explained by the fact that a spin structure on a manifold
is equivalent to a trivialization of the tangent bundle over the
$2$-skeleton (for a CW decomposition) (see, e.g.,
\cite[Theorem II.2.10]{lawson89:_spin}). Since, in our case,
$\beta M$ has codimension $2$, we do not have room to push
$\beta M$ away from this $2$-skeleton.  For that we would need
the singular stratum to have codimension at least $3$.
\end{remark}
\begin{example}
\label{ex:spinc}
The following is a more general version of the same example.  Let
$M_\Sigma$ be a simply connected spin$^c$ manifold which is
\emph{not} spin, i.e., with $w_2(M)\ne 0$. ($\bC\bP^n$ is an example
if $n$ is even.) The spin$^c$
condition means we have fixed a complex line bundle $\cL$
on $M_\Sigma$ such that
$c_1(\cL)$ reduces mod $2$ to $w_2(M_\Sigma)$.  Choose a submanifold
$\beta M$ (which will have codimension $2$ in $M_\Sigma$) dual to $\cL$;
by construction, $c_1(\cL)$, and thus also $w_2(M_\Sigma)$, is trivial on the
complement $M$ of a tubular neighborhood $N$ of $\beta M$,
so we get a spin manifold $M$ with boundary $\partial M$,
and $\partial M$ is a circle bundle over $\beta M$. We get the line bundle
$L$ associated to this circle bundle by pulling back $\cL$ via the
inclusion $\iota\co\beta M\hookrightarrow M_\Sigma$, and $N$ can be
identified with the disk bundle of $L$.  Furthermore,
$\beta M$ is a spin manifold, since
\[
w_2(\beta M) + \bigl(c_1(L)\!\!\!\mod 2\bigr) =
w_2\left(\left.TM_\Sigma\right\vert_{\beta M}\right) =
\iota^* w_2(M_\Sigma) = \bigl(c_1(L)\!\!\!\mod 2\bigr),
\]
which says that $w_2(\beta M)=0$. \qed
\end{example}
\subsection{The case of $P=S^1$: geometry of the tubular neighborhood}
\label{sec:S1tube}
For use later on, we want to study in more detail the geometry of the
tubular neighborhood $T$ of $\beta M$ in a conical metric.  Just for
this subsection, to simplify notation, we replace $\beta M$ by $B$ and
$\widetilde p$ by $p$. Consider the following general setting:

Let $B$ be a Riemannian manifold, and let $p\co L\to B$
be a complex line bundle over $B$, equipped with
a connection $\nabla^L$ and a hermitian metric $h$.  View the total
space of $L$ as a Riemannian manifold with the associated conical
metric as in \textup{Definition \ref{def:conicalmetric}}. Then
the projection $p$ becomes a Riemannian submersion
with totally geodesic flat fibers. We denote by $A$
and $T$ the associated O'Neill
tensors \textup{(see \cite{MR0200865})}, and by $F$
the curvature $2$-form of the line bundle $L$. \footnote{We
normalize $F$ to be purely imaginary; many authors normalize it to
be real. This would introduce a factor of $i$ but wouldn't change
the formula for the sectional curvature.}
\begin{proposition}
\label{prop:adaptedmetric}
Let $p\co L\to B$ be a complex line bundle over $B$ with connection
$\nabla^L$ and hermitian metric $h$, as above.  Denote by $X$ and $Y$
horizontal vector fields projecting to vector fields $X_*$ and $Y_*$
on $B$, and by $V$ and $W$ vertical vector fields. Then
\begin{enumerate}
\item[{\rm (i)}] $A_XV=0$ and $A_XY
  = \frac{1}{2i}F(X_*,Y_*)\partial_z$, with $\partial_z$ denoting
  differentiation by $z$ in the fiber direction;
\item[{\rm (ii)}] for $v, \,w$ vertical unit tangent vectors and $x,
  \,y$ horizontal unit tangent vectors, the sectional curvatures of
  $L$ are 
$$
K_{vw}=K_{vx}=0, \ \ \ K_{xy}=\widehat K_{x_*y_*} - \frac{3}{4}\vert
F(x_*,y_*)\vert^2\,\langle\partial_z,\partial_z\rangle_h;
$$
here $x_*=dp(x)$, $y_*=dp(y)$, and $\widehat K$
is the sectional curvature on the base manifold $B$.
\end{enumerate}
\end{proposition}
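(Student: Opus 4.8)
The plan is to reduce everything to an explicit local model over $B$ and then feed it into O'Neill's structure equations for a Riemannian submersion with totally geodesic fibers. First I would fix, over a contractible chart $U\subseteq B$, a \emph{unitary} local frame for $L$: in such a frame the hermitian metric $h$ becomes the standard flat metric on the fibers $\bC$, the connection takes the form $\nabla^L=d+\omega$ with $\omega$ a purely imaginary $1$-form on $U$, and the curvature is $F=d\omega$. Writing $z$ for the fiber coordinate, the horizontal lift of a vector field $X_*$ on $U$ is $X_*$ minus the infinitesimal $U(1)$-rotation scaled by $\omega(X_*)$, and the induced metric on the total space of $L|_U$ takes the warped form $p^*g_B+dr^2+r^2\eta^2$, where $r=|z|$ and $\eta$ is the connection $1$-form. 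The essential observation is that, \emph{because the frame is unitary}, the metric restricted to each fiber is one and the same flat metric on $\bR^2$; hence the fibers are intrinsically flat and totally geodesic, so O'Neill's tensor $T$ vanishes identically. This already yields $K_{vw}=0$, and the vanishing of $A$ on a vertical argument in part~(i) is one of the standard structural identities for $A$, which sees only the horizontal part of its first slot.

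Next I would compute $A$ on horizontal vectors. For horizontal lifts $X,Y$ one has $A_XY=\tfrac12\,\mathcal{V}[X,Y]$, and a direct bracket computation in the local model identifies $\mathcal{V}[X,Y]$ with the curvature, giving $A_XY=\tfrac1{2i}F(X_*,Y_*)\partial_z$ in the notation of the statement (this is where $F$ being imaginary is used). With $T\equiv 0$ and this formula in hand, O'Neill's curvature equation for orthonormal horizontal $x,y$ reads $K_{xy}=\widehat K_{x_*y_*}-3\|A_xy\|^2$, and substituting $\|A_xy\|^2=\tfrac14|F(x_*,y_*)|^2\langle\partial_z,\partial_z\rangle_h$ produces exactly the asserted formula for $K_{xy}$. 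For the mixed curvature $K_{xv}$ I would compute the Levi-Civita connection of the warped metric directly: one checks that $\nabla_X\partial_r=\nabla_{\partial_r}X=0$ for horizontal $X$, which already kills $R(X,\partial_r)\partial_r$ and gives $K_{xv}=0$.

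The step I expect to be the main obstacle is this last one, the mixed sectional curvature: in contrast with $K_{vw}$ (immediate from flatness of the fibers) and $K_{xy}$ (formal once $A$ is known), it does not follow mechanically from $T\equiv 0$, and one must compute the connection of $p^*g_B+dr^2+r^2\eta^2$ and verify the relevant covariant derivatives vanish by hand. It is also the place where the conventions need the most care --- one has to be consistent about the normalization of $F$ (imaginary rather than real), about identifying $\partial_z$ with a vertical tangent vector, and about the difference between $\langle\partial_z,\partial_z\rangle_h$ (the hermitian norm of the fiber direction) and the complex-bilinear extension of the Riemannian metric. Once these are pinned down, the remaining verifications are routine computations in the local model.
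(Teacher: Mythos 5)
Your overall strategy coincides with the paper's: exhibit $p$ as a Riemannian submersion with flat, totally geodesic fibers (so the O'Neill tensor $T$ vanishes), compute $A_XY=\tfrac12\,\mathcal{V}[X,Y]$ for basic horizontal fields and identify the vertical part of the bracket with the curvature of $\nabla^L$, and then read off $K_{vw}$ and $K_{xy}=\widehat K_{x_*y_*}-3\vert A_xy\vert^2$ from O'Neill's formulas. All of that matches the paper, which likewise reduces everything to \cite[Corollary 1]{MR0200865} once $A_XY=\frac{1}{2i}F(X_*,Y_*)\partial_z$ is verified, and your local unitary-frame model is a perfectly good way to carry out that verification.

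The gap is in the mixed terms, which you yourself flagged as the delicate step. Your justification for $A_XV=0$ --- that $A$ ``sees only the horizontal part of its first slot'' --- is the identity $A_E=A_{\mathcal{H}E}$, i.e.\ $A$ vanishes when its \emph{first} argument is vertical; it says nothing about $A_XV$ with $X$ horizontal and $V$ vertical in the \emph{second} slot. By O'Neill's Lemma 2, $A_X$ is skew-adjoint and interchanges the horizontal and vertical subspaces, so $\langle A_XV,Y\rangle=-\langle V,A_XY\rangle$: the operator $A_XV$ is the adjoint of $Y\mapsto A_XY$ and cannot be dismissed by a structural identity once $A_XY\ne0$. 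For the same reason $\nabla_XV$ has horizontal component $A_XV$, so the assertion $\nabla_X\partial_r=0$ that you propose to check ``by hand'' in the warped model is exactly the statement at issue, not a route to it; and with $T\equiv0$ O'Neill gives $\langle R(X,V)V,X\rangle=\vert A_XV\vert^2$, so $K_{xv}=0$ is \emph{equivalent} to $A_XV=0$. The paper argues this step by a different (geometric) device: it claims that a horizontal geodesic and a vertical line span a totally geodesic flat surface in $L$, which would force $A_X\partial_z=0$. To complete your proof you need either an argument of that kind or an explicit computation of the horizontal part of $\nabla_X\partial_z$ in your local model --- and in doing so you must confront the identity $\langle A_XV,Y\rangle=-\langle V,A_XY\rangle$, which places the claims $A_XV=0$ and $A_XY=\frac{1}{2i}F(X_*,Y_*)\partial_z$ in visible tension whenever $F\ne0$.
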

%\begin{proposition}
%\label{prop:adaptedmetric}
%Let $B$ be a Riemannian manifold, and let $p\co L\to B$ be a complex
%line bundle over $M$, equipped with a connection $\nabla^L$ and a
%hermitian metric $h$.  View the total space of $L$
%as a Riemannian manifold with the associated conical metric
%as in \textup{Definition \ref{def:conicalmetric}}.  Then $p$ becomes
%a Riemannian submersion with totally geodesic flat fibers.
%If $A$ is the associated O'Neill tensor \textup{(see \cite{MR0200865})},
%and $X$ and $Y$ are horizontal vector fields projecting to vector
%fields $X_*$ and $Y_*$ on $B$, $V$ and $W$ vertical vector fields,
%then $A_XV=0$ and $A_XY = \frac{1}{2i}F(X_*,Y_*)\partial_z$, with
%$\partial_z$ denoting differentiation by $z$ in the fiber direction,
%and $F$ denoting the curvature $2$-form of the line
%bundle $L$.\footnote{We normalize $F$ to be purely imaginary; many authors
%normalize it to be real. This would introduce a factor of $i$
%but wouldn't change the formula for the curvature.}
%Thus the sectional curvatures of $L$ are {\lp}for $v, \,w$
%vertical unit tangent vectors and $x, \,y$ horizontal
%unit tangent vectors{\rp} $K_{vw}=K_{vx}=0$,
%$K_{xy}=\widehat K_{x_*y_*} - \frac{3}{4}\vert
%F(x_*,y_*)\vert^2\,\langle\partial_z,\partial_z\rangle_h$.
%Here $x_*=dp(x)$, $y_*=dp(y)$, and $\widehat K$ is the sectional
%curvature on the base manifold $B$.
%\end{proposition}
\begin{proof}
By the construction of the conical metric, the
derivative $dp$ is an isometry on horizontal
vectors, and so $p$ is a Riemannian submersion.  Similarly, the metric
on the fibers is just the flat Euclidean metric defined by $h$, and
the fibers are totally geodesic.  So the O'Neill tensor $T$ vanishes
identically. We have $A_XV=0$ since if we lift a geodesic in $B$ to a
horizontal geodesic in $L$, then locally, this geodesic and a vertical
straight line span a totally geodesic flat submanifold in $L$.  Thus
everything follows from \cite[Corollary 1, p.\ 465]{MR0200865} once we
check that $A_XY = \frac{1}{2i}F(X_*,Y_*)\partial_z$.  This in turn
follows from the definition of the covariant derivative and curvature
in terms of the connection.  Let $X_*$ and $Y_*$ partial derivatives
with respect to local coordinates in $B$, lifted up to basic vector
fields on $L$.  Then $[X_*,Y_*]=0$ on $B$ and
\[
A_XY = \frac12 [X, Y]^{\mathcal V} = \frac{\partial_z}{2i}(p^*F)(X,Y)
= \frac{1}{2i}F(X_*,Y_*){\partial_z}
\]
(see for example \cite[Ch.\ 1]{Charles} for some of the relevant
calculations).  
\end{proof}
\begin{corollary}
\label{cor:pscconical}  
Let $B$ be a closed Riemannian manifold, and let $p\co L\to B$ be a
complex line bundle over $M$, equipped with a connection and a
hermitian metric $h$.  View the total space of $L$
as a Riemannian manifold with the associated conical metric.
If $L$ has {\psc}, then so does $B$.  Conversely, if $B$ has {\psc},
then by rescaling the metric $h$ we can arrange for the scalar
curvature of $L$ to be bounded below by a positive constant.
\end{corollary}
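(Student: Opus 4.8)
The plan is to read off the scalar curvature of the total space of $L$, with its conical metric, directly from Proposition \ref{prop:adaptedmetric}(ii). Fix a point $\ell$ in the fiber $L_b$ over $b\in B$, pick a horizontal orthonormal basis $x_1,\dots,x_n$ at $\ell$ projecting to an orthonormal basis $x_{1*},\dots,x_{n*}$ of $T_bB$, and complete it by a vertical orthonormal basis $v_1,v_2$ (the fiber being $\bC=\bR^2$). The scalar curvature is the sum of the sectional curvatures over all ordered pairs of distinct basis vectors; by Proposition \ref{prop:adaptedmetric}(ii) every $2$-plane containing a vertical direction has vanishing sectional curvature, so only the horizontal planes contribute, and
\begin{equation*}
\kappa_L(\ell)=\sum_{i\ne j}K_{x_ix_j}
=\kappa_B(b)-\tfrac34\sum_{i\ne j}\lvert F(x_{i*},x_{j*})\rvert^2\,\langle\partial_z,\partial_z\rangle_h
=\kappa_B(b)-\tfrac34\,\Psi(b),
\end{equation*}
where $\Psi(b):=\sum_{i\ne j}\lvert A_{x_i}x_j\rvert^2=\langle\partial_z,\partial_z\rangle_h\sum_{i\ne j}\lvert F(x_{i*},x_{j*})\rvert^2$ is a nonnegative function of $b$ alone, independent of $\ell$ and of the choice of frame. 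In other words, $\kappa_L=\pi^*\kappa_B-\tfrac34\,\pi^*\Psi$ is pulled back from $B$.

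The first assertion is then immediate: if $L$ has positive scalar curvature, then for every $b\in B$ and every $\ell\in L_b$ we have $\kappa_B(b)=\kappa_L(\ell)+\tfrac34\,\Psi(b)\ge\kappa_L(\ell)>0$, so $B$ has positive scalar curvature. (Compactness of $B$ is not needed here; the sign of the correction term does all the work.)

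For the converse, assume $B$ is closed with $\kappa_B\ge\delta>0$ for a constant $\delta$. Replacing the hermitian metric $h$ by $t\,h$ for a constant $t>0$ leaves the base metric $g_B$, the connection $\nabla^L$ (hence the curvature $F$), and the horizontal distribution unchanged, while rescaling the vertical metric — and therefore $\langle\partial_z,\partial_z\rangle_h$, and hence $\Psi$ — by $t$. Thus the total space of $L$ with the rescaled conical metric has scalar curvature $\kappa_B-\tfrac34\,t\,\Psi\ge\delta-tC$, where $C:=\tfrac34\max_{b\in B}\Psi(b)<\infty$ by compactness of $B$. Choosing $t<\delta/(2C)$ (or any $t>0$ if $C=0$) makes the scalar curvature of $L$ bounded below by $\delta/2>0$, as claimed.

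The argument is essentially bookkeeping, with no genuine obstacle: the negative sign of the $-\tfrac34\lvert F\rvert^2$ term in Proposition \ref{prop:adaptedmetric}(ii) forces $\kappa_B\ge\kappa_L$ pointwise, and compactness of $B$ is precisely what lets a single constant rescaling of $h$ dominate the curvature term — exactly the place where a noncompact base would instead demand a fiberwise rescaling in the spirit of B\'erard-Bergery (cf.\ Remark \ref{rem:BB}). The one point needing care is the normalization check that under $h\mapsto t\,h$ the quantity $\lvert A_XY\rvert^2=\tfrac14\lvert F(X_*,Y_*)\rvert^2\langle\partial_z,\partial_z\rangle_h$ scales linearly in $t$, which is consistent with the sectional-curvature formula used above.
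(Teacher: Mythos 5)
Your proof is correct and is exactly the argument the paper intends: the paper's proof of this corollary is simply ``Immediate from the sectional curvature formulas for $L$ in Proposition \ref{prop:adaptedmetric},'' and you have filled in precisely those details — summing the sectional curvatures to get $\kappa_L=\pi^*\kappa_B-\tfrac34\pi^*\Psi$ and observing that $h\mapsto t\,h$ scales $\Psi$ linearly while leaving the horizontal geometry untouched, with compactness of $B$ supplying the uniform constant.
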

\begin{proof}
Immediate from the sectional curvature formulas for $L$ in
Proposition \ref{prop:adaptedmetric}.
\end{proof}
\subsection{The case of $P=S^1$: obstruction theory}
\label{sec:S1obstr}
Now we want to set up an obstruction theory that will show that in
some cases, a spin manifold $M$ with $S^1$-fibered singularities
(in the sense of Definition \ref{def:S1fib})
does not carry a metric of {\psc}.  For our first steps
it will not actually matter whether we use the
cylindrical metric definition (Definition \ref{def:cyl-metric})
or conical metric definition (Definition \ref{def:conicalmetric}),
but we concentrate on the latter, 
which is in many respects more natural.  We
assume that $M$ and $\beta M$ are both simply connected.
Then $M_\Sigma$ will be as well (by Van Kampen's Theorem). As
we saw right after Definition \ref{def:conicalmetric},
there is an obvious necessary condition: $\beta M$
must admit a metric of {\psc} (if $M$ carries either a cylindrical
or a conical psc-metric), and in addition, in the conical case 
$M_\Sigma$ must admit a psc-metric (in the usual sense for closed manifolds).

Let $M$ be a simply connected spin $n$-manifold with $S^1$-fibered
singularities and $\p M\to \beta M$ be the corresponding
$S^1$-bundle.  Assume that $\beta M$ is also simply
connected. There are two cases to consider: when $\beta M$ is spin or
not spin.

If $\beta M$ is spin, we assume that $\alpha([\beta M])=0$ in $KO_{n-2}$,
where $\alpha\co \Omega_{*}^{\spin}\to KO_{*}$ is the index map. Then
$\beta M$ admits a metric of {\psc} by \textup{\cite{MR1189863}}.
If $\beta M$ is not spin,
then $\beta M$ admits a metric of {\psc} by
\textup{\cite{MR577131}}.

In both cases, we fix a metric $g_{\beta M}$ of {\psc} on $\beta
M$. As pointed out in the proof of \textup{\cite[Theorem
    C]{MR577131}}, once one has normalized the arclength of the $S^1$
fibers of the map $p\co \p M\to \beta M$, this determines uniquely via
\textup{\cite[Theorem 3.5]{MR0262984}} an $S^1$-invariant metric
$g_{\p M}$ on $\p M$ with totally geodesic fibers, which will have
{\psc} if we make the fibers short enough.  Attach an infinite
cylinder $\p M \times [0, \infty)$ to $M$ along $\p M$, give
the end the product metric $g_{\p M} + dr^2$, and extend to a
metric $g$ on $M\cup_{\p M} (\p M \times [0, \infty))$.
Since $g$ has {\psc} except perhaps on a compact set
{\lp}namely, $M${\rp}, the $\text{Cl}_n$-linear Dirac
operator on this open manifold is Fredholm and has a
well-defined index in $KO_n$.  We call it the
\emph{relative index} since it is essentially the same as
the relative index invariant defined in
\cite[\S4]{MR720933}.
\begin{proposition}
\label{prop:relat-ind}
Under these circumstances, the relative index in $KO_n$ is an
obstruction to $g_{\p M}$ extending to a metric of {\psc} on $M$ in
either of the senses of \textup{Definition \ref{def:S1fib}}.
\end{proposition}
\begin{proof}
This is obvious, as existence of a metric of {\psc} on $M$ extending
the given $S^1$-invariant metric $g_{\p M}$ on $\p M$ implies that
the Dirac operator has spectrum bounded away from $0$, by the Lichnerowicz
identity, and thus has vanishing index.  Note by the way that
the relative index can depend on the isotopy class of the {\psc}
metric on $\beta M$, which was used to define the
$S^1$-invariant metric $g_{\p M}$ on $\p M$.
\end{proof}
\begin{remark}
  While computing the relative index of Proposition \ref{prop:relat-ind}
  is not always so easy, there is one case where one knows that it
  vanishes.  Namely, if the free $S^1$-action on $\p M$ extends to
  a (non-free) action on $M$ with some component of the fixed set
  $M^{S^1}$ having codimension $2$, then $M$ admits an $S^1$-invariant
  psc-metric by \cite[Theorem 2.4]{MR3449263}.  (The theorem
  is stated for closed manifolds but the same argument applies to
  our case.)
\end{remark}  

Next we state and prove another obstruction theorem which appears
in the ``only if'' part of Theorem B.  This result is an
adaption of \cite[pp.\ 715--716]{MR1857524}, which corresponds to
the special case where $p$ is a trivial bundle and
$\p M = \eta\times \beta M$, where $\eta$ is $S^1$ with its
non-bounding spin structure and $\beta M$ is spin.
\begin{theorem}
\label{thm:complexDiracobstr}
Let $(M, \p M)$ be a spin $n$-manifold with $S^1$-fibered singularities,
determined by an $S^1$-bundle $p\co \p M\to \beta M$. Assume that
$n$ is even, $\beta M$ is spin, and $M_\Sigma$ is spin$^c$ but not spin,
with spin$^c$ line bundle $\cL$ which is trivial over $M$ and pulled
back from $L_{\beta M}$ over $\beta M$ on the tubular neighborhood
$-D(L_{\beta M})$ of $\beta M$.  Then $\ind \Dirac_{(M_\Sigma, \cL)}\in K_n$
is an obstruction to existence of a conical metric of {\psc} in the sense of
\textup{Definition \ref{def:S1fib}}.
\end{theorem}
\begin{proof}
For simplicity we will drop mention of the line bundle $\cL$ as we are
always working with the line bundle outlined in the statement of the
theorem.  Suppose $M_\Sigma$ has a conical metric of {\psc}.  On $M$
itself, $\Dirac_{M_\Sigma}$ is just the usual Dirac operator on $M$,
and its square is just $\nabla^*\nabla + \frac14\kappa_M$, which is
bounded away from $0$.  On $T$ (a tubular
  neighborhood of $\beta M$), the spin$^c$ line bundle enters also,
and we get instead $\nabla^*\nabla + \frac14 \kappa_T + \mathcal R_L$,
where the term $\mathcal R_L$ has the form
\begin{equation*}
\mathcal R_L =\tfrac12 \sum_{j<k} F_L(e_j,e_k)\cdot e_j\cdot e_k,
\end{equation*}
where one sums over an orthonormal frame and
$F_L$ is the curvature of the line bundle $L$. Since the index
is invariant under a homotopy of the metric, we can replace the
original conical metric on $T$ by the restriction of a metric
on the $\bC\bP^1$-bundle compactification $\bP(L\oplus 1)$
of the line bundle $L$ over $\beta M$, smoothed out near $\partial M$
to patch with a psc metric on $M$,
and by making the $\bC\bP^1$ fibers very small,
we can get the scalar curvature of $T$ to dominate the
$\mathcal R_L$ term.  (This is exactly the same argument that was
used in \cite{MR1857524}.) Hence the square of the spin$^c$ Dirac
operator can be bounded strictly away from $0$ everywhere,
and when $(M, \p M)$ admits a conical metric of {\psc}, the $KU$-index
of $\Dirac_{M_\Sigma}$ must vanish, which is what we wanted to show.
\end{proof}
\begin{remark}
\label{rem:cylvscone}
We believe that it should be possible to use Theorem
\ref{thm:complexDiracobstr} to construct a spin manifold with
$S^1$-fibered singularities that admits a cylindrical psc metric, but
not a conical psc metric. This would show that 
the conditions of Definitions \ref{def:cyl-metric} and \ref{def:conicalmetric}
are not equivalent.  The idea would be to construct $M^n$, a compact
simply connected spin manifold, such that $\p M$ admits a free $S^1$-action
of \emph{even} type (in the sense of \cite{MR3449263}),
so that $\p M/S^1 = \beta M$ is spin and simply connected.
The action being of even type forces
the Chern class $c_1(L)$ classifying the line bundle $L$ and the circle bundle
$p\co\p M\to \beta M$ to be \emph{odd} (not to reduce to $0$ mod $2$).
Then $M_\Sigma$ would not be spin, and Theorem \ref{thm:complexDiracobstr} is
applicable.  If $n\ge 7$ and $\alpha(\beta M)=0$ in $KO_{n-2}$,
then $\beta M$ admits a metric of {\psc} by \cite{MR1189863}.
If one could choose this psc-metric so that its lift to an $S^1$-invariant
metric on $\p M$ extends over $M$, then $(M,\p M)$ would admit a cylindrical
psc-metric.  On the other hand, if $\alpha^c(M_\Sigma)\ne 0$,
then $M_\Sigma$ would not admit a conical psc-metric.
Unfortunately we have not yet been able to construct a case where we
can verify all these details simultaneously, so at
the moment, that cylindrical and conical psc-metrics are not equivalent
is only a conjecture.
\end{remark}

\subsection{The case of $P=S^1$: bordism theory}
\label{sec:S1bord}
Here we want to consider the analogue of the
definition from Section \ref{sec:Zkbord}.
\begin{definition}
\label{def:S1bord}
Let $X$ be a topological space. The group $\Omega^\spS(X)$ will
consist of equivalence classes of maps $f_0\co M_\Sigma\to X$,
where $M_\Sigma$ is the closed manifold associated to an
$n$-dimensional spin manifold $(M,\partial M)$
with $S^1$-fibered singularities in the sense of Definition
\ref{def:S1fib}.  Two such maps
$f_0\co M_\Sigma\to X$ and $f_1\co M'_\Sigma\to X$ are said to be
equivalent if there is a spin
bordism $\overline M $ between $M$ and $M'$ as spin
manifolds with boundary, with a principal
$S^1$-bundle $\overline p \co \p \overline M \to \beta \overline M$
given by a free action of $S^1$ on $ \p \overline M$,
such that the restrictions $\overline p|_{\p M}$
and $\overline p|_{\p M'}$ coincide with the corresponding maps
$p \co \p M \to \beta M$ and $p' \co \p M' \to \beta M'$, and there is a map
$\overline f\co \overline M \to X$ restricting to $f_0$ and $f_1$
on $M$ and $M'$.  In particular, the manifold
$\beta \overline M$ gives a spin bordism of closed manifolds
$\beta \overline M \co \beta M \rightsquigarrow \beta M'$. \qed
\end{definition}
Exactly as in \cite{MR0346824}, it is easy to see that
$\Omega^\spS_*(\text{---})$ is a homology theory represented by a spectrum
$\MSpin^{S^1\fb}$.  We need the analogues of \eqref{eq:bordtriZk}
and of Proposition \ref{prop:BZkcofib}, but this is somewhat more
involved since, as we have seen in examples, $\beta M$ is not always
spin.  Thus the ``Bockstein map'' $\beta$ will take its values
in a direct sum of two bordism groups, one spin and one spin$^c$.
In fact (if $\p M$ has multiple components), $\beta M$ can have
components in both groups.

We have three natural transformations here. The first one 
\begin{equation*}
i \co \Omega^{\spin}_*(\text{---})\to \Omega^\spS_*(\text{---})
\end{equation*}
is given by considering closed spin manifolds as manifolds with empty
$S^1$-fibered singularity.

We notice that the Bockstein operator $M \mapsto \beta M$
comes together with a map $f\co \beta M\to BS^1=\bC\bP^\infty$
classifying the $S^1$-bundle $\p M \to \beta M$,
or equivalently a line bundle $L$ over $\beta M$. 
There is a natural splitting of the Bockstein
transformation into two pieces, $\beta_{\text{even}}$ and
$\beta_{\text{odd}}$. This splitting can be traced back to
\cite[Propositions 2.2 and 2.3]{MR882700}.
The two summands keep track of the components $\p_i M$ of
$\p M$ where the $S^1$-action is of \emph{even type} (preserves the
spin structure) and those where it is of \emph{odd type} (does not
preserve the spin structure), respectively. In both cases, we get a
pair $[N, L]$, where $N$ is one of the components $\beta_i M$ of
$\beta M$ and $L$ is a line bundle on $N$. However,
there is a difference:
\begin{enumerate}
\item[$\beta_{\text{even}}$:] In the even case, the
spin structure on $\p M$ descends to a spin structure on $N$,
so that we can think of $[N, L]$ as living in 
$\Omega^{\spin}_*(\bC\bP^\infty)$. 
\item[$\beta_{\text{odd}}$:] In the odd case, the manifold  $N$ has a
spin$^c$ structure determined by $L$, since in order to get a
spin structure on $\p_i M$, the class $w_2(\beta_i M)$ has to be
in the kernel of $p^*$, which means 
$c_1(L\vert_{\beta_i M})\equiv w_2(\beta_i M)\ \mod 2$
and $L\vert_{\beta_i M}$ determines
a spin$^c$ structure on $\beta_i M$. Thus in the odd case,
$[N, L]$ lives in $\Omega^{\spinc}_*$.  
\end{enumerate}

To sum up, we have a natural transformation
\begin{equation*}
  \Omega^\spS_*(\text{---})
  \xrightarrow{\beta = (\beta_{\text{even}},\,\beta_{\text{odd}})}
  \Omega^{\spin}_*(\text{---}\times \bC\bP^\infty)\oplus
  \Omega^{\spinc}_*(\text{---})
\end{equation*}
of degree $-2$. Finally we have a transformation
\begin{equation*}
  \tau \co \Omega^{\spin}_*(\text{---}\times \bC\bP^\infty) \oplus
  \Omega^{\spinc}_*(\text{---})
  \to  \Omega^{\spin}_*(\text{---})
\end{equation*}
of degree $+1$ which is a transfer: it sends a map $f\co N\to X$ to
$\tf\co \tN \to X$, where $\tN$ is the total space of the
$S^1$-bundle determined by the line bundle over $N$ given
by either the map to $\bC\bP^\infty$ or by the spin$^c$ structure.
\begin{theorem}
\label{thm:S1bordismseq}
We have an exact triangle of {\lp}unreduced{\rp} bordism theories
\begin{equation}
  \begin{diagram}
    \setlength{\dgARROWLENGTH}{1.95em}
  \node{\Omega^{\spin}_*(\text{---})}
          \arrow[2]{e,t}{i}
  \node[2]{\Omega^\spS_*(\text{---})}
          \arrow{sw,t}{\beta=(\beta_{\text{even}},\,\beta_{\text{odd}})}
  \\
  \node[2]{\Omega^{\spin}_*(\text{---}\times \bC\bP^\infty) \oplus
     \Omega^{\spinc}_*(\text{---})}
  \arrow{nw,t}{\tau}
  \end{diagram}.
  \label{eq:bordtriS1}
\end{equation}
\end{theorem}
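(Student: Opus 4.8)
The plan is to follow the geometric template of the proof of Proposition~\ref{prop:BZkcofib} essentially verbatim. Since $\Omega^\spS_*(\text{---})$ is a homology theory and $i,\beta,\tau$ are natural transformations, the asserted exact triangle is equivalent to the long exact sequence
\begin{equation*}
\cdots\xrightarrow{\tau}\Omega^{\spin}_n(X)\xrightarrow{i}\Omega^\spS_n(X)\xrightarrow{\beta}\Omega^{\spinc}_{n-2}(X)\oplus\Omega^{\spin}_{n-2}(X\times\CP^\infty)\xrightarrow{\tau}\Omega^{\spin}_{n-1}(X)\xrightarrow{i}\cdots,
\end{equation*}
and by naturality it suffices to prove exactness for $X=\pt$, the general case being identical with reference maps to $X$ carried along. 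So one verifies exactness at the three spots by the same explicit cut-and-paste constructions with spin manifolds with $S^1$-fibered singularities.

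First I would dispose of the three ``composite vanishes'' statements. We have $\beta\circ i=0$ because a closed spin manifold, regarded as having empty $S^1$-fibered singularity, has $\beta M=\emptyset$. For $\tau\circ\beta=0$: if $(M,\p M\to\beta M)$ has $S^1$-fibered singularities, with $\p M=\widetilde N$ the circle bundle of the line bundle $L$ over $N=\beta M$, then $\tau(\beta[M])=[\widetilde N]$, which is a spin boundary since $\widetilde N=\p M$. For $i\circ\tau=0$: given $N$ with its line bundle $L$ (recorded by a map $N\to\CP^\infty$ when $N$ is spin, or by the spin$^c$ structure when $N$ is non-spin) and associated circle bundle $\widetilde N$, the manifold $W=\widetilde N\times[0,1]$ — with $\p_1 W=\widetilde N\times\{0\}$ carrying the $S^1$-action from $\widetilde N\to N$ (so $\beta W=N$) and $\delta W=\widetilde N\times\{1\}$ — exhibits $i(\tau[N])=[\widetilde N]=0$ in $\Omega^\spS_*$. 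Here one uses that the total space of the circle bundle of $L$ over a spin base, or over a non-spin base carrying the spin$^c$ structure induced by $L$, inherits a canonical spin structure — the point at which $w_2(\widetilde N)=p^*w_2(N)=0$, via the Gysin sequence, enters.

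Next come the three ``kernel $\subseteq$ image'' inclusions, again as in the proof of Proposition~\ref{prop:BZkcofib}. For $\ker\beta\subseteq\image i$: if $\beta M$ together with $L$ bounds in $\Omega^{\spinc}_{*}\oplus\Omega^{\spin}_{*}(\CP^\infty)$, say $\beta M=\p N$ with $L$ extended over $N$, then the associated circle bundle $\widetilde N$ has $\p\widetilde N=\p M$, and the closed spin manifold $M\cup_{\p M}(-\widetilde N)$ is bordant to $M$ through manifolds with $S^1$-fibered singularities. For $\ker\tau\subseteq\image\beta$: if the circle bundle $\widetilde N$ of a line bundle over a spin or non-spin $N$ is a spin boundary, $\p M=\widetilde N$, then $M$ is a spin manifold with $S^1$-fibered singularities with $\beta[M]=[N]$. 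For $\ker i\subseteq\image\tau$: if a closed spin $M$ bounds a spin manifold $W$ with $S^1$-fibered singularities, then $\p W$ decomposes as a copy of $M$ together with a circle bundle $\widetilde M'$ over $M'=\beta(\p_1 W)$, so $[M]=\tau[M']$.

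The genuinely new feature relative to the $\bZ/k$ case — and the step I expect to demand the most care — is the spin/spin$^c$ bookkeeping: justifying that the target of $\beta$ is the \emph{direct sum} $\Omega^{\spinc}_{*-2}(X)\oplus\Omega^{\spin}_{*-2}(X\times\CP^\infty)$, with $\beta$ sending a component $\beta_iM$ of $\beta M$ into the spin$^c$ summand precisely when $\beta_iM$ is non-spin, carrying the spin$^c$ structure determined by $L|_{\beta_iM}$, and into the other summand, recording $L|_{\beta_iM}$ via its classifying map, otherwise. One must check: (a) this decomposition is bordism-invariant, i.e.\ a bordism $\beta\overline M$ of $\beta M$ compatible with the line bundle keeps each component in its own summand, since relative to the fixed $L$ the alternatives ``spin'' and ``non-spin, spin$^c$ via $L$'' are each stable under bordism; (b) $\tau$ is a well-defined additive transformation on the direct sum, given uniformly by passing to the circle bundle of $L$ with the induced spin structure on the total space; and (c) all the cut-and-paste constructions above transport the spin/spin$^c$ structures consistently — in particular that the glued spin structures on $M\cup_{\p M}(-\widetilde N)$, on $W=\widetilde N\times[0,1]$, and so on, restrict as required. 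Granting this, exactness is formal from the six items, and dualizing yields the cofiber sequence $\Sigma(\MSpin\wedge\CP^\infty_+)\vee\Sigma\MSpin^c\xrightarrow{\tau}\MSpin\xrightarrow{i}\MSpin^{S^1\fb}$ of spectra recorded just after the theorem.
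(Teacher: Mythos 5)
Your proposal is correct and follows essentially the same route as the paper: the same six cut-and-paste verifications ($\beta\circ i=0$, $\tau\circ\beta=0$, $i\circ\tau=0$ via the cylinder $\widetilde N\times[0,1]$, and the three kernel-image inclusions), with the spin/spin$^c$ splitting of the target of $\beta$ handled exactly as in the paper's setup preceding the theorem. Your treatment of $\ker i\subseteq\image\tau$ and $\ker\beta\subseteq\image i$ is if anything slightly more carefully worded than the paper's.
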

\begin{proof}
As this is all we really need, we will show that
\begin{equation}
\label{eq:S1bordismseq}  
\cdots \xrightarrow{\beta} \Omega^{\spin}_{n-1}(\bC\bP^\infty)
\oplus\Omega^{\spinc}_{n-1} \xrightarrow{\tau} \Omega^{\spin}_n
\xrightarrow{i} \Omega^{\spS}_n \xrightarrow{\beta}
\Omega^{\spin}_{n-2}(\bC\bP^\infty) \oplus \Omega^{\spinc}_{n-2}
\xrightarrow{\tau} \cdots
\end{equation}
  is exact.  First we note a few easy facts: $\beta\circ i = 0$, since
  the image of $i$ consists of (classes of) closed manifolds with
  empty boundary; $\tau\circ \beta = 0$, since $\tau\circ \beta([M,\p
    M]) = [\p M]$, and $\p M$ bounds $M$; and $i\circ \tau = 0$, since
  if $M \xrightarrow{p} N$ is a principal $S^1$-bundle with associated
  complex line bundle $L$ over $N$ and $N$ spin$^c$ or spin and $M$
  spin, then $i\circ \beta([N, L]) = [M]$.  Let $W = M\times [0, 1]$
  with $\beta W = N$.  (See the comments in the parallel argument in
  the proof of Proposition \ref{prop:BZkcofib}.)

  Next, let's show that $\ker \tau\subseteq\image \beta$.  Suppose we
  have a spin manifold $N_1$, with line bundle $L_1$,
  and a spin$^c$ manifold $N_2$ with line bundle $L_2$ associated to
  the spin$^c$ structure.  Then $\tau([N_1, L_1]+[N_2, L_2]) = 0$ means that
  the disjoint union $M_1\amalg M_2$ is a spin boundary, where $M_j$ is
  the circle bundle over $N_j$ defined by $L_j$.  Let $P$ be a spin
  manifold with boundary $M_1\amalg M_2$.  Then $P$ is a spin manifold
  with $S^1$-fibered singularities and $\beta(P,\partial P) = [N_1,
    L_1]+[N_2, L_2]$.

  Let's show that $\ker\beta\subseteq\image i$.  Suppose $\beta([M,\p
    M]) = 0$.  That means $(\beta M, L)$ bounds in the appropriate
  sense, i.e., if $\beta M=\beta_1 M \amalg\beta_2 M$ with $\beta_1 M$
  spin and $\beta_2 M$ spin$^c$, then $[\beta_1 M, L_1]=0$ in
  $\Omega^{\spin}_{n-2}(\bC\bP^\infty)$ and $[\beta_2 M]=0$ in
  $\Omega^{\spinc}_{n-2}$.  Then changing things up to
  bordism, we can assume $\beta M$ is actually empty, and then clearly
  $M$ lies in the image of $i$.

  Finally, we show that $\ker i\subseteq\image \tau$. Suppose $M$ is a
  closed spin $n$-manifold and $i([M])=0$. That means that $M$ bounds,
  not necessarily in the sense of spin manifolds, but in the sense of
  spin manifolds with $S^1$-fibered singularities.  Choose $\overline
  M $ bounding $M$, with a free $S^1$-action on $\p\overline M $. This
  means in particular that $M$ is the total space of a principal
  $S^1$-bundle over a spin or spin$^c$ manifold, so $[M]$ lies in the
  image of $\tau$.
\end{proof}
\begin{lemma}
  \label{lem:S1cofib}
  Split the domain of the transfer map $\tau$ of
  \textup{\eqref{eq:S1bordismseq}} into three summands:
  \[
  \Omega^{\spin}_*\oplus \widetilde\Omega^{\spin}_*(\bC\bP^\infty)
  \oplus \Omega^{\spinc}_*.
  \]
  Then $\tau$ vanishes identically on $\Omega^{\spinc}_*$ and
  on $\widetilde\Omega^{\spin}_*(\bC\bP^\infty)$, and
  its restriction to $\Omega^{\spin}_*$ has finite image.
  There is a natural splitting of $\beta$ on the 
  $\Omega^{\spinc}_{n-2}$ summand.
\end{lemma}
\begin{proof}
  First suppose $N$ is a spin$^c$ manifold and $L$ is the line bundle
  defined by the spin$^c$ structure.  Then the total space of $L$ is a
  spin manifold, as explained in Remark \ref{rem:spinstr} and in
  Example \ref{ex:spinc}.  Hence the disk bundle $T(L)$ of $L$ is a
  spin manifold bounding the circle bundle of $L$, which is
  $\tau(N,L)$, and so $\tau([N,L])$ is trivial in spin bordism.
  (Note that the $S^1$-action on $\tau([N,L])$ is free of odd type.)
  Mapping $N$ back to $(-T(L),-\partial T(L))$ gives us a splitting of
  $\beta$ from $\Omega^{\spinc}_{n-2}$ to $\Omega^{\spS}_n$.

  The other summand in the domain of $\tau$ is
  $\widetilde\Omega^{\spin}_{n-2}(\bC\bP^\infty)$.  
  We can describe the transfer map $\tau$ on this summand with the
  aid of \cite[\S6]{Boardman}. (We are indebted to \cite{MR1189863}
  for this reference.)  This comes from a Thom map
  $\Sigma \bC\bP^\infty\to \bS$, where $\bS$ is the sphere spectrum,
  which comes from stabilizing the map
  $\Sigma\bC\bP^n\to S^{2n+1}$ collapsing all but the very top cell.
  Since this map is null-homotopic (in the limit as $n\to\infty$),
  the transfer map $\tau$ vanishes.
  
  Note that it was asserted
  in \cite[Proposition, p.\ 354]{MR0248858} that
  $\widetilde\Omega^{\spin}_{n-2}(\bC\bP^\infty)$ is isomorphic
  to $\Omega^{\spinc}_n$, but this appears to be a misprint, since,
  for example, $\Omega^{\spinc}_0\cong \bZ$ and
  $\widetilde\Omega^{\spin}_{-2}(\bC\bP^\infty)=0$.  The correct
  statement may be found in \cite[Remark 5.10]{MR1857524},
  that there is an equivalence
  $\MSpinc\simeq \MSpin\wedge \Sigma^{-2}\bC\bP^\infty$, or in other
  words $\Omega^{\spinc}_n\cong
  \widetilde\Omega^{\spin}_{n+2}(\bC\bP^\infty)$.
  (The geometric interpretation of this is that dualizing a complex
  line bundle over a closed spin manifold gives a codimension-$2$
  spin$^c$ submanifold.)
  So $\widetilde\Omega^{\spin}_{n-2}(\bC\bP^\infty)\cong \Omega^{\spinc}_{n-4}$. 
  \end{proof}  
Restating Theorem \ref{thm:S1bordismseq} and Lemma \ref{lem:S1cofib}
in slightly different language, we have the following:
\begin{proposition}
  The bordism spectrum $\MSpin^{S^1\fb}$ sits in a cofiber
  sequence, which is rationally split, of the form
  \[
  \MSpin \to \MSpin^{S^1\fb} \to \Sigma^2\MSpin \vee \Sigma^2\MSpin^c
  \vee \Sigma^4\MSpin^c.
  \]
\label{prop:S1cofib}
\end{proposition}
\begin{proof}
  This follows from the homotopy-theoretic description of the bordism
  groups and Lemma \ref{lem:S1cofib}.
\end{proof}
\begin{definition}
\label{def:S1K}
The orientation maps $\alpha\co\MSpin \to \KO$ and $\alpha^c\co\MSpin^c \to \K$
for spin and spin$^c$ manifolds respectively are 
defined by taking the index of the appropriate Dirac operator. Then the
maps $\alpha$ and $\alpha^c$ give a map of spectra
\begin{equation}\label{alpha-s1}
\ind^{S^1\fb}\co \MSpin^{S^1\fb}\to \Sigma^2\KO \vee \Sigma^2\K.
\end{equation}
In more detail, the $\K$ component corresponds to the $\alpha^c$
invariant for $M_\Sigma$, and the $\KO$ component corresponds to the
$\alpha$ invariant for spin components of $\beta M$.  In addition, the
relative index of \textup{Proposition \ref{prop:relat-ind}} is defined
on the kernel of $\ind^{S^1\fb}$. Indeed, when $\ind^{S^1\fb}(M,\p M)=0$,
$\beta M$ admits a metric of positive scalar curvature, and then
by Corollary \ref{cor:pscconical} there is a metric of positive scalar
curvature on $\p M$ extending over a tubular neighborhood of $\beta M$,
and we can compute the index obstruction to extending this metric
over $M$.  The relative index might depend on the choice of a positive
scalar curvature metric on $\beta M$. More specifically, fixing a path
component of positive scalar curvature metrics on $\beta M$ is
essentially equivalent (via the correspondence in \cite{BB}) to fixing
a path component of the $S^1$-invariant psc-metrics on $\partial M$.
Between any two of these, there is a relative index in $KO_n$, $n=\dim
M$, and the relative index obstruction for extending an
$S^1$-invariant psc-metric $g'_\p$ on $\p M$ over $M$ is the same as
the relative index obstruction for another $S^1$-invariant psc-metric
$g_\p$ plus the relative index $i(g'_\p,g_\p)$, by \cite[Theorem
  4.41]{MR720933}.
\end{definition}
\begin{theorem}[Cf.\ {\cite[Theorem 7.4(1)]{MR1857524}}]
\label{thm:S1bordism}
Let $M$ be a simply connected spin manifold with $S^1$-fibered
singularities, of dimension $n\ge 7$.  Assume that each component of
$\beta M$ is simply connected.  If the class of $M$ in
$\Omega_n^{\spS}$ contains a representative $M'$ of {\psc}, then $M$
admits a metric of \psc.
\end{theorem}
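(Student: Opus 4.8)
The plan is to follow the surgery-theoretic template of \cite[Theorem 7.4(1)]{MR1857524}, transferring a metric of \psc\ from $M'$ to $M$ one elementary cobordism at a time, while keeping careful track of the $S^1$-fibered structure near the singular stratum. Let $\overline M$ be a bordism of spin manifolds with $S^1$-fibered singularities from $M'$ to $M$ in the sense of Definition \ref{def:S1bord}; thus $\overline M$ is a compact spin manifold with corners, $\delta\overline M = M\sqcup(-M')$, carrying a free $S^1$-action on $\p_1\overline M$ whose quotient $\beta\overline M$ is a spin/spin$^c$ bordism from $\beta M'$ to $\beta M$, with $\p_1\overline M\to\beta\overline M$ the associated circle bundle, classified by a line bundle $\overline L$ restricting to the given data on $\beta M'$ and on $\beta M$. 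First I would choose a Morse function $f\co\overline M\to[0,1]$ with $f^{-1}(0)=M'$ and $f^{-1}(1)=M$ which near $\p_1\overline M$ is $S^1$-invariant and descends to a Morse function $\overline f$ on $\beta\overline M$, compatibly with the corner $\p(\delta\overline M)$. Its gradient flow then exhibits $M$ as obtained from $M'_\Sigma$ by a finite sequence of surgeries of two kinds: \emph{regular} surgeries, supported in the regular part away from the stratum, and \emph{singular} surgeries, which are $S^1$-equivariant surgeries on $\p M'$ pulled back from surgeries on $\beta M'$ along the critical points of $\overline f$, and hence supported near the singular stratum of $M'_\Sigma$.

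Next I would arrange that all these surgeries have codimension $\ge 3$: in the regular part for the regular surgeries, and in $\beta M'$ (hence also in $\p M'$) for the singular ones. Since $M$, $M'$ and every component of $\beta M$, $\beta M'$ are simply connected and $n=\dim M\ge 7$ (so $\dim\beta M\ge 5$), I would first perform surgeries on the interiors of $\overline M$ and of $\beta\overline M$ to make these bordisms connected and simply connected, and then run the standard handle-cancellation and handle-trading arguments, exactly as in \cite[\S7]{MR1857524} and as in the proof of bordism invariance of \psc\ for simply connected manifolds (Gromov--Lawson, Schoen--Yau), to reduce to surgeries of codimension $\ge 3$. Along the way one checks that the spin structures, the free $S^1$-action, and the classifying line bundle $\overline L$ (equivalently, on the non-spin components of $\beta\overline M$, the spin$^c$ structure) are carried along; here codimension $\ge 3$ is what guarantees that $H^2$ and $w_2$ of the relevant base manifolds are unaffected, so that $\overline L$ extends over every surgered $\beta$-manifold with the correct mod-$2$ reduction.

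Then I would transfer the \psc\ metric through the surgeries. For a regular surgery this is precisely the Gromov--Lawson and Schoen--Yau surgery theorem, carried out so as to keep the metric conical near the stratum and a product near $\p M'$; thus \psc\ on $M'_\Sigma$ propagates across all the regular handles. For a singular surgery, recall that by Definition \ref{def:conicalmetric} the \psc\ metric of $M'$ near the stratum has, on the disk bundle of $L$, the adapted form $(\widetilde p)^*g_{\beta M'}+h$ with $g_{\beta M'}$ a \psc\ metric on $\beta M'$; I would perform the corresponding codimension-$\ge 3$ Gromov--Lawson surgery on $(\beta M',g_{\beta M'})$, extend the connection and hermitian metric of $L$ over the trace of the surgery, and then invoke Proposition \ref{prop:adaptedmetric} and Corollary \ref{cor:pscconical}: the adapted metric on the total space of the new line bundle has \psc\ once the fibers are rescaled short enough, it agrees with the unchanged conical metric away from the surgery region, and it smooths — via \cite[Theorem 3.5]{MR0262984} — to the cylindrical collar metric $dr^2+g_{\p M}$ of the resulting $M$. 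Iterating over all the elementary cobordisms produces a conical (or, by the parallel argument, cylindrical) metric of \psc\ on $M$, which is what we want.

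The step I expect to be the main obstacle is the handling of the singular surgeries. First, one must arrange the Morse function $f$ to be genuinely compatible with the $S^1$-fibered structure near $\p_1\overline M$, so that the associated elementary cobordisms are honestly circle bundles over the handles of $\overline f$ and interact correctly with the corner $\p(\delta\overline M)$. Second, for each singular surgery one must extend the line-bundle data — connection, hermitian metric, and in particular $c_1$ with its prescribed mod-$2$ reduction on the non-spin part — over the trace of the surgery, and then produce the adapted \psc\ metric on the new disk bundle: the correction term $-\tfrac34|F(x_*,y_*)|^2\langle\p_z,\p_z\rangle_h$ of Proposition \ref{prop:adaptedmetric}(ii) must be controlled by rescaling $h$ to make the fibers short, as in Corollary \ref{cor:pscconical}, uniformly over the surgery region, and the result must be matched along the common boundary $\p M$ to the \psc\ metric built on $M$ by the regular surgeries. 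For spin$^c$ components of $\beta M'$ one additionally needs that the surgeries preserve the chosen spin$^c$ structure. The regular surgeries and the handle-theoretic bookkeeping are then routine, following \cite[\S7]{MR1857524}.
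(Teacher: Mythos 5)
Your overall strategy --- transferring the psc metric handle by handle through a singular bordism, with regular surgeries in the interior and singular surgeries performed on $\beta M'$ and then extended over the tubular neighborhood --- is the same surgery-theoretic strategy the paper uses, but organized differently: the paper first splits $\Omega^{\spS}_n$ into spin and spin$^c$ summands via Lemma \ref{lem:S1cofib} and Proposition \ref{prop:S1cofib}, handles the spin summands exactly as in \cite[Theorem 7.4(1)]{MR1857524}, and delegates the spin$^c$ summands to the separate bordism theorem of Section \ref{sec:surg} (Theorem \ref{thm:bordismspinc}). Working directly with the geometric bordism $\overline M$ is a legitimate alternative organization for the spin part, and your attention to extending the line-bundle data and to the $-\tfrac34\vert F\vert^2$ correction in the adapted metric is well placed.

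The gap is in the spin$^c$ part, i.e., the non-spin components of $\beta\overline M$. First, the reduction to codimension $\ge 3$ surgeries there is \emph{not} ``exactly as in'' the spin case: to make $\beta M\hookrightarrow\beta\overline M$ a $2$-equivalence on a non-spin component one cannot kill all of $\pi_2$ of the interior of the bordism, since $w_2\ne 0$ obstructs embedding those spheres with trivial normal bundle; one must kill only $\ker w_2\subset\pi_2$, reducing $H_2$ to $\bZ/2$ --- this is precisely the extra argument in the paper's proof of Theorem \ref{thm:bordismspinc}, and it is not supplied by the spin-case handle trading you cite. Second, and more seriously, the paper's mechanism for the spin$^c$ surgeries is Theorem \ref{thm:codim3spinc}, which preserves the \emph{twisted} condition $\tfrac14\kappa+\mathcal R_L>0$; the key step there is modifying the curvature $2$-form of $L$ by an exact form to kill $\mathcal R_L$ near the surgery sphere, after ruling out the bad case $N=S^2$ with $\langle c_1(L),[N]\rangle\ne 0$. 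Your proposal controls only the ordinary scalar curvature of $\beta M'$ plus the fiber rescaling of Corollary \ref{cor:pscconical}, and never confronts the Lichnerowicz term $\mathcal R_L$ (which is distinct from the O'Neill term you do control). That twisted condition is what the psc representatives in the spin$^c$ summands actually carry --- they are produced by Corollary \ref{cor:nonspinpsc} from $\bC\bP^2$-bundles with $\tfrac14\kappa+\mathcal R_L>0$ --- and it is that condition, not bare psc, that must be transported through the bordism for the theorem to feed correctly into Theorem \ref{thm:S1psc}. Without this ingredient the spin$^c$ case is not established.
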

\begin{proof}
As we saw in Lemma \ref{lem:S1cofib}
and Proposition \ref{prop:S1cofib}, $\Omega_n^{\spS}$ is built out
of spin and spin$^c$ bordism groups.  For the spin bordism groups,
the proof is essentially the same as that of {\cite[Theorem
  7.4(1)]{MR1857524}}, using spin surgeries either in the interior
of $M'$ or on $\beta M'$ (and then lifted to $\partial M'$). The
latter case requires a little bit of care, as we explain below.

Let $\bar M\co M \bord M'$ be a bordism in $\Omega_n^{\spS}$. In particular,
it means we have a bordism
$(\beta \bar M)\co (\beta  M,f)\bord (\beta M',f')$,
where $\bar f\co \beta \bar M \to  \CP^{\infty}$
is a classifying map for the $S^1$ fiber bundle
$\bar  p \co\p \bar M \to\beta \bar M$, which restricts to corresponding
classifying maps $f$ and $f'$ on the boundary
$\p (\beta \bar M)= \beta M \sqcup -\beta M'$.

In order to ``push'' a psc metric from $M'$ to $M$, we have to show
that the bordism $\bar M \co M \bord M'$ can be modified so that the
embeddings $(M',\p M') \hookrightarrow (\bar M, \p \bar M)$ and $\beta
M' \hookrightarrow \beta \bar M$ are both 2-connected. The only
non-standard part is a modification of the bordism
$\beta \bar M \co \beta M \bord \beta M'$.

First we notice that the inclusion $i'\co \beta M'\hookrightarrow\beta
\bar M$ can be assumed to be 1-connected. Indeed, let
$\iota\co S^1\hookrightarrow \beta \bar M\setminus \beta M'$
be an embedding;
then the composition $\bar f\circ \iota \co S^1 \hookrightarrow \beta
\bar M\to \CP^{\infty}$ is null-homotopic. Hence the classifying map
$\bar f$ can be extended to the manifold resulting from surgery along
$S^1\hookrightarrow\beta \bar M\setminus \beta M'$. Moreover, the same
argument shows that we can assume both manifolds $\beta M'$ and $\beta
\bar M$ are simply-connected.

The $S^1$-bundle $\bar p\co \p \bar M \to \beta \bar M$ restricts to the bundle
$p'\co \p M' \to \beta M'$; together they give us the commutative diagram:
\begin{equation}\label{eq:pi2}
\begin{diagram}
\setlength{\dgARROWLENGTH}{1.15em}
   \node{\pi_2 \p M'}
	\arrow{s,r}{j'_*}
        \arrow{e,t}{ p_*'}
   \node{\pi_2 \beta M'}
	\arrow{s,r}{i'_*}
        \arrow{e,t}{\p'}
   \node{\pi_1 S^1}
	\arrow{s,r}{\text{Id}}
        \arrow{e}        
        \node{0}
\\
 \node{\pi_2 \p \bar M}
	%\arrow{s,r}{i'_*}
        \arrow{e,t}{\bar p_*}
   \node{\pi_2 \beta \bar M}
	%\arrow{s,r}{i'_*}
        \arrow{e,t}{\bar \p}
   \node{\pi_1 S^1}
	%\arrow{s,r}{Id}
        \arrow{e}        
        \node{0}        
        %	\node[2]{E(L)}
%	        \arrow{s,r}{p}
%		\\
%       \node{B}
%                \arrow[2]{e,t}{\bar f}
%	\node[2]{BG}
\end{diagram}
\end{equation}
Let $\phi\co S^2 \to \beta M'$ be such that
$\p'\co [\phi]\mapsto 1\in \pi_1(S^1)$.
Then $\bar \p \co [\bar \phi]\mapsto 1\in \pi_1(S^1)$,
where $\bar \phi = \phi'\circ i' \co S^2 \to \beta \bar M$.

Given these elements, we consider a map
$\bar \psi\co S^2 \to \beta \bar M$ such that $[\bar \psi] = a \neq 0$
in the factor group
$\pi_2\beta\bar M/\mbox{Im }i'_*$.  If $\bar \p ([\bar \psi])=0$, then
we can do surgery along $S^2\hookrightarrow\beta \bar M$ and extend
the map $\bar f$ to the resulting manifold. If
$\bar \p ([\bar \psi])= k\in \pi_1 S^1$,
we can adjust the element $[\bar \psi]$ by adding
$-k[\bar \phi]$. This creates a representative
$\tilde \psi\co S^2 \to \beta \bar M$
of the same element $a\in \pi_2\beta\bar M/\mbox{Im }i'_*$
such that $\bar \p [\tilde \psi]=0$. This construction gives a
bordism $\beta \bar M\co \beta M \bord \beta M'$, where the embedding
$i'\co \beta M'\hookrightarrow \beta \bar M$ is 2-connected. Finally,
to obtain the desired modification of the bordism $\bar M\co M \bord M'$, 
we perform surgeries on the interior of $\bar M\setminus \p \bar M$ using the
standard method found, say, in \cite[Theorem 7.4]{MR1857524}.  We have to use
the restriction $n\geq 7$ since the Bockstein manifolds $\beta M$ and
$\beta M'$ need to have dimension at least five.  For the spin$^c$
bordism groups, the proof will be given in Theorem
\ref{thm:bordismspinc}.
\end{proof}  

Now we get to the main part of Theorem B.  The following Theorem \ref{thm:S1psc}
relies on Corollary \ref{cor:nonspinpsc}, which will be proved in
Section \ref{sec:CP2}.
\begin{theorem}
\label{thm:S1psc}
Let $(M, \p M)$ be a spin $n$-manifold with $S^1$-fibered singularities,
with $n\ge 7$, determined by an $S^1$-bundle $p\co \p M\to \beta M$.
Assume that $M$ and $\beta M$ are connected and simply connected. Then $M$
admits a conical metric of {\psc} in the sense of
\textup{Definition \ref{def:S1fib}} if and only if the bordism class of
$M$ maps to $0$ in $KO_{n-2}\oplus K_{n-2}$ under
the map $\ind^{S^1\fb}$ from {\rm (\ref{alpha-s1})}  and the
relative index of \textup{Proposition \ref{prop:relat-ind}}
vanishes in $KO_n$ for some choice of a {\psc}
metric on $\beta M$.
\end{theorem}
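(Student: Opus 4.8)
The plan is to prove the two directions separately. For necessity, assume $M$ carries a conical psc-metric $g$. By Corollary \ref{cor:pscconical} the metric $g$ induces a psc-metric on $\beta M$, so the $\alpha$-invariant of each spin component of $\beta M$ vanishes. Since the cone on $S^1$ is $\bR^2$, $g$ also extends to a smooth metric on the closed manifold $M_\Sigma$, and the form of $g$ near the singular stratum (Definition \ref{def:conicalmetric}, Proposition \ref{prop:adaptedmetric}) additionally forces positive ``twisted scalar curvature'' on the spin$^c$ part of $\beta M$, with twisting by the curvature of $L$. By the spin and spin$^c$ Lichnerowicz identities the corresponding $\alpha$- and $\alpha^c$-invariants --- which by Definition \ref{def:S1K} are the two coordinates of $\ind^{S^1\fb}([M])$ in $KO_{n-2}\oplus K_{n-2}$ --- then vanish. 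Finally $g$ restricts on $\partial M$ to the $S^1$-invariant psc-metric determined (via \cite[Theorem 3.5]{MR0262984}) by the induced psc-metric on $\beta M$, so Theorem \ref{thm:S1obstr} forces the relative index in $KO_n$ to vanish.

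\textbf{Sufficiency: reduction.} By Theorem \ref{thm:S1bordism} the existence of a conical psc-metric on $M$ depends only on $[M]\in\Omega^{\spS}_n$, so it is enough to show that under the two vanishing hypotheses $[M]$ has a representative carrying a psc-metric. By Proposition \ref{prop:S1cofib} there is a splitting
\[
\MSpin^{S^1\fb}\simeq \Sigma^2\MSpin \vee \Sigma^2\MSpin^c \vee E,
\]
where $E$ is an extension of $\Sigma^4\MSpin^c$ by $\MSpin$. Correspondingly $[M]$ has a $\Sigma^2\MSpin$-component (detected by the spin part of $\beta M$, lying in $\Omega^{\spin}_{n-2}$), a $\Sigma^2\MSpin^c$-component (detected by the spin$^c$ part of $\beta M$), an $\MSpin$-part of $E$ (detected by closed spin manifolds with empty singularity, i.e.\ the image of $i\colon\Omega^{\spin}_n\to\Omega^{\spS}_n$), and a $\Sigma^4\MSpin^c$-part of $E$ (detected by $\widetilde\Omega^{\spin}_{n-2}(\bC\bP^\infty)\cong\Omega^{\spinc}_{n-4}$). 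By commutativity of \eqref{eq:6}, $\alpha^{S^1\fb}$ respects this splitting, landing in $\Sigma^2\KO\vee\Sigma^2\KU\vee\KO\vee\KU$; the two hypotheses say exactly that the $\Sigma^2\MSpin$-component lies in $\ker\alpha$, the $\Sigma^2\MSpin^c$-component in $\ker\alpha^c$, and the $\MSpin$-part of $E$ in $\ker\alpha$, while the $\Sigma^4\MSpin^c$-part --- whose index lies in the $K_{n-4}$ that is quotiented out in Theorem B --- is unconstrained.

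\textbf{Sufficiency: realization.} I would then realize each piece by a psc-manifold with $S^1$-fibered singularities. For the $\ker\alpha$-pieces, Stolz's theorem \cite{MR1189863} says that a spin bordism class with vanishing $\alpha$ is represented by a spin manifold admitting psc (a total space of an $\bH\bP^2$-bundle, or a class detected in $ko_*$ carrying psc): for the $\Sigma^2\MSpin$-component take such a manifold $N'$ with the trivial line bundle, so that $N'\times D^2$ is the representative and carries a conical psc-metric by Corollary \ref{cor:pscconical}; for the $\MSpin$-part of $E$ take the closed psc spin manifold with empty singularity. For the $\Sigma^2\MSpin^c$-component, use the spin$^c$ analogue of Stolz's theorem --- that $\ker(\alpha^c\colon\Omega^{\spinc}_*\to ku_*)$ is generated by total spaces of $\bC\bP^2$-bundles carrying psc, with a suitable spin$^c$-structure on $\bC\bP^2$ --- which is Corollary \ref{cor:nonspinpsc} together with the spin$^c$ bordism theorem \ref{thm:bordismspinc}; this produces a spin$^c$ manifold $N'$ whose spin$^c$ line bundle $L'$ admits a psc-metric, and then the disk bundle of $L'$ is a spin manifold with $S^1$-fibered singularities carrying a conical psc-metric (Corollary \ref{cor:pscconical}). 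The unconstrained $\Sigma^4\MSpin^c$-part of $E$ is handled similarly: the relevant finite-index classes of $\widetilde\Omega^{\spin}_{n-2}(\bC\bP^\infty)$ are represented by pairs $(N',L')$ in which $N'$ is built from psc spin manifolds and odd-dimensional complex projective spaces (or quadrics), which carry psc, together with the $\bC\bP^2$-bundle transfer of Section \ref{sec:CP2}, so disk bundles of such $L'$ again give psc representatives. Taking the disjoint union of the psc representatives of the components yields a psc representative of $[M]$, and Theorem \ref{thm:S1bordism} then produces a conical psc-metric on $M$.

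\textbf{Main obstacle.} I expect the real work to be the spin$^c$ input for sufficiency: proving that $\ker(\alpha^c\colon\Omega^{\spinc}_*\to ku_*)$ is generated by $\bC\bP^2$-bundles carrying psc. This forces one to use the spin$^c$-structure on $\bC\bP^2$ whose image in $ku_4$ vanishes --- not the one from the complex structure, whose Todd genus is $1$ --- and to carry out a spin$^c$ version of the Gromov--Lawson--Stolz surgery argument; this is precisely the content of Sections \ref{sec:surg} and \ref{sec:CP2}. By contrast the $S^1$-fibered bookkeeping around these inputs --- matching the three summands of \eqref{eq:7} with the three vanishing conditions and assembling the pieces --- is routine once those inputs are available.
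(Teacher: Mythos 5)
Your overall architecture (necessity via index/Lichnerowicz arguments, sufficiency via the bordism theorem plus the splitting of $\MSpin^{S^1\fb}$ plus Stolz's theorem and the $\bC\bP^2$-bundle input) matches the paper's, but the way you attach the $K$-theoretic conditions to the summands is misallocated, and this produces two genuine gaps. For necessity of the $K_{n-2}$ component: you identify it with $\alpha^c$ of the spin$^c$ part of $\beta M$ and claim the conical metric ``forces positive twisted scalar curvature'' there. Neither half is right. By Definition \ref{def:S1K} the $\K$-component of $\ind^{S^1\fb}$ is $\alpha^c(M_\Sigma)$, the index of the spin$^c$ Dirac operator on the closed $n$-manifold $M_\Sigma$ (relevant precisely when $\beta M$ is spin but $c_1(L)$ is odd, so that $M_\Sigma$ is spin$^c$ and not spin). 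And a conical psc metric does \emph{not} force $\frac14\kappa_{\beta M}+\mathcal R_L>0$: by Proposition \ref{prop:adaptedmetric} the defect between the scalar curvatures of $L$ and of $\beta M$ is quadratic in the curvature $F$ and scales away as the fiber metric $h$ is shrunk, whereas $\mathcal R_L$ is linear in $F$ and does not ($\bC\bP^2$ with its anticanonical spin$^c$ structure has psc but $\alpha^c=1$). The paper's argument is instead a Lichnerowicz computation on $M_\Sigma$ itself, after deforming the metric on the tubular neighborhood to the restriction of a metric on $\bP(L\oplus 1)$ with very small $\bC\bP^1$ fibers so that the scalar curvature dominates $\mathcal R_L$; that step is absent from your proof.

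The same misallocation breaks sufficiency. You impose $\ker\alpha^c$ on the $\Sigma^2\MSpin^c$ summand and declare the $\Sigma^4\MSpin^c$ part of $E$ unconstrained and representable by disk bundles. This is backwards. The $\Sigma^2\MSpin^c$ summand (non-spin components of $\beta M$) needs no index condition at all: a simply connected non-spin $\beta M$ has psc by Gromov--Lawson, and the disk bundle of $L$ then carries a conical psc metric by Corollary \ref{cor:pscconical} regardless of $\alpha^c(\beta M,L)$. The genuinely hard case is $\beta M$ spin with $c_1(L)$ odd: there the total space of $L'$ is \emph{not} spin, so ``the disk bundle of $L'$'' is not a legal representative in $\Omega^{\spS}_n$ --- this is exactly why Lemma \ref{lem:S1cofib} only splits a finite-index subgroup --- and one must instead run Example \ref{ex:spinc} in reverse: realize the class by a spin$^c$ $n$-manifold $M_\Sigma$ with vanishing $KU$-index and apply Corollary \ref{cor:nonspinpsc} and Theorem \ref{thm:bordismspinc} to the $n$-dimensional $M_\Sigma$, not to the $(n-2)$-dimensional $\beta M$. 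So the spin$^c$ surgery machinery is invoked where it is not needed, and the case for which it (and the $K_{n-2}$ hypothesis) is actually required is left unhandled.
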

\begin{remark}
\label{rem:etapsc}
Note that the case of $\eta$-type
singularities of \textup{\cite[Theorem 1.1]{MR1857524}}
is subsumed in \textup{Theorem \ref{thm:S1psc}}. In the case considered
there, $p$ is a trivial bundle and $\p M = \eta\times \beta M$,
where $\eta$ is $S^1$ with its non-bounding spin structure and
$\beta M$ is spin.  In that case, $M_\Sigma$ is spin$^c$ and the
obstruction constructed in the proof on pages 715--716 of
\cite{MR1857524} is the component of $\ind^{S^1\fb}$ with values in
$K_n$. Also in that case, because of the classical exact sequence
\[
\cdots \to KU_n \xrightarrow{\beta} KO_{n-2} \xrightarrow{\eta} KO_{n-1}
\to KU_{n-1} \to \cdots,
\]
vanishing of $KU_n$ index implies that $\alpha(\beta M)=0$ in
$KO_{n-2}$, so it wasn't necessary to worry about this component
of $\ind^{S^1\fb}$, and $\beta M$ automatically had {\psc}.
Similarly, in this case the relative index in $KO_n$ also
vanishes when the $KU_n$ index vanishes.
\end{remark}
\begin{proof}[Proof of Theorem \ref{thm:S1psc}]
We begin with the necessity of the condition, which doesn't involve
the dimensional restriction $n\ge 7$.  Vanishing of the $KO_{n-2}$
term when $\beta M$ is spin is needed for $\beta M$ to admit a metric
of {\psc}, which as we have seen is a necessary condition. Similarly,
vanishing of the $KO_n$ term when $M_\Sigma$ is spin is needed for
$M_\Sigma$ to admit a metric of {\psc}, which is also a necessary
condition.  If $\beta M$ is spin$^c$ but not spin, then the $KO_{n-2}$
index automatically vanishes, and $\beta M$ admits a metric of {\psc}.
Then by Corollary \ref{cor:pscconical}, with the right scaling
of the metric on the fibers, there is a conical metric
on the tubular neighborhood $T$ of $\beta M$ which also admits {\psc}.
Now the $KU_n$ index of the spin$^c$ Dirac operator $\Dirac_{M_\Sigma}$
on $M_\Sigma$ is also an obstruction to {\psc}, by Theorem
\ref{thm:complexDiracobstr}.

The argument for sufficiency uses the surgery method and the condition
$n\ge 7$, which guarantees that $M$ and $\beta M$ each have dimension
at least $5$, the range where the $h$-cobordism theorem applies.
Suppose that $\ind^{S^1\fb}(M,\p M)=0$ and that $M$ and $\beta M$
are simply connected.

First assume that the $S^1$-action on $\p M$ is of odd type.  
In this case we view $(\beta M, L)$ as defining a class in
$\Omega_{n-2}^{\spinc}$.  By Lemma \ref{lem:S1cofib}, we have
a splitting $\Omega_{n-2}^{\spinc}\to \Omega^{\spS}_n$ obtained by
taking the disk bundle $D(L)$, which is spin since
$w_2(D(L))=\widetilde p^*(w_2(\beta M)) = \widetilde p^*(c_1(L))\pmod 2=0$
by the Gysin sequence.  This admits a conical metric of {\psc},
so subtracting this off, we can assume up to bordism that
$\p M$ is empty.  Then $M$ is closed simply connected spin manifold
which has vanishing $\alpha$-invariant, since for a closed manifold,
the $\alpha$-invariant coincides with the relative index for extending
a psc-metric from the empty boundary.  Thus
M admits {\psc} by Stolz's Theorem \cite{MR1189863}.  So we're
done with this case by Theorem \ref{thm:S1bordism}.

Now assume that the
$S^1$-action on $\p M$ is of even type and thus that
$\beta M$ is spin.  Since $\ind^{S^1\fb}(M,\p M)=0$,
we have the condition $\alpha(\beta M)=0$, and since
$\beta M$ is simply connected, it admits a
metric of {\psc} by \cite{MR1189863}. Lift such a metric to an
$S^1$-invariant metric on $\p M$. Suppose the relative index for
extending this metric to a psc-metric over $M$ vanishes.
Since the $S^1$-action on $\p M$ is of even type,
the disk bundle $D(L)$ of the line bundle $L$ over $\beta M$ is \emph{not}
spin, since $S^1$ with the $S^1$-invariant spin structure is $\eta$, which
is not a spin boundary, and thus the disk bundle $D(L)$ is spin$^c$
but not spin. In this situation, 
the condition $\ind^{S^1\fb}(M,\p M)=0$ gives
vanishing of the $KU$-index of $\Dirac_{M_\Sigma}$.  In this case, we
need to use Corollary \ref{cor:nonspinpsc}.  By the method of
proof, we can do spin$^c$ surgeries on $M_\Sigma$ away from
the interior of $M$, possibly changing $\beta M$ and the line bundle
$L$ over it, but preserving {\psc} and
the condition $\frac14 \kappa_g + \cR_h > 0$.
This has the effect of changing the class of $(M,\p M)$
to a new $(M', \p M')$ in the same bordism class in $\Omega^{\spS}_n$.
The new $\beta M'$ will be in the same spin bordism class as
$\beta M$, and we can arrange for it still to be simply connected.
Since we can keep the spin$^c$ line bundle and the correction term
$\cR_h$ trivial on $M'$, the new $M'$ will have a conical metric
of {\psc}. Again we conclude by Theorem \ref{thm:S1bordism}.
\end{proof}
\subsection{Proof of Theorem B}
\begin{remark}
\label{rem:reformulation}
Theorem \ref{thm:S1psc} can also be reformulated as the statement
labeled Theorem B in Section \ref{sec:intro}, using the map
$\alpha^{S^1\fb}$ obtained by naturality of the cofiber to get a
commutative diagram of spectra
\begin{equation*}
  \begin{diagram}
    \setlength{\dgARROWLENGTH}{1.95em}
\node{\Sigma (\MSpin\wedge \CP^{\infty}_+)\vee \Sigma \MSpin^c}
        \arrow{e,t}{\tau}
        \arrow{s,t}{\Sigma \alpha\vee \Sigma \alpha^c}
\node{\MSpin}
        \arrow{e,t}{i}
        \arrow{s,t}{\alpha}
\node{ \MSpin^{S^1\fb}}
        \arrow{s,t}{\alpha^{S^1\fb}}
\\
\node{\Sigma (\KO\wedge \CP^{\infty}_+)\vee \Sigma \KU}
        \arrow{e,t}{\tau^{\KO}}
\node{\KO}
        \arrow{e,t}{i^{\KO}}
\node{\,\KO^{S^1\fb}.}
  \end{diagram}
\end{equation*}
\end{remark}
\begin{remark}
\label{rem:disconnected}
Careful examination of the proof above shows that we don't really
need to assume that $\beta M$ is connected.  The theorem still
applies when $\beta M$ has more than one component, as long as each
component is simply connected.  In this situation, it is possible
to have ``mixing'' of the various cases, as it is possible for
some components of $\p M$ to be of even type and for other components
to be of odd type.
\end{remark}
\section{A spin$^c$ surgery theorem}
\label{sec:surg}
In this section we adapt the methods of Gromov and Lawson
\cite{MR577131} and Hoelzel \cite{MR3498902} to the situation needed
in Section \ref{sec:S1}.  Recall that to show that a manifold with
$S^1$-fibered singularities admits a conical metric of {\psc},
sometimes we need to deal with the effect of surgery on the Bochner
term $\frac14 \kappa_{\beta M} + \mathcal R_L$ in the square of the Dirac
operator on a spin$^c$ manifold $M$ with the line bundle $L$
determined by the spin$^c$ structure.  For the proof of Theorem
\ref{thm:S1psc}, we need to know that under some circumstances,
positivity of this term is preserved by spin$^c$ surgeries (which of
course can change the line bundle $L$).  This will provide a partial
converse to \cite[Corollary D.17]{lawson89:_spin}, and show that under
some circumstances, existence of a {\psc} metric on $M$ together
with a hermitian metric and connection on $L$ for which
$\frac14 \kappa_{M} + \mathcal R_L > 0$ is
\emph{equivalent} to vanishing of $\alpha^c(M,L)$.

We need to consider coupling between the Riemannian
curvature and the curvature of a line bundle $L$ (which is just given
by an ordinary $2$-form $\omega$, which after dividing by $2\pi\,i$, has
integral de Rham class representing $c_1(L)$). Now recall
\cite[Lemma D.13]{lawson89:_spin}, which says that \emph{any}
$2$-form $\omega$ with $\frac{\omega}{2\pi\,i}$ in the de Rham class of
$c_1(L)$ can be realized as the curvature of some unitary connection
on $L$. To state the next theorem we need to recall the definition
of spin$^c$ surgery.
\begin{definition}
Let $(M, L)$ be a closed spin$^c$ manifold (so $M$ is a closed
oriented manifold and $L$ is a complex line bundle on $M$ with
$c_1(L)$ reducing mod $2$ to $w_2(M)$). We say that
$(M', L')$ is obtained from $(M,L)$ by \emph{spin$^c$ surgery in codimension
$k$} if there is a sphere $S^{n-k}$ embedded in
$M$ with trivial normal bundle, $M'$ is the result of gluing
in $D^{n-k+1}\times S^{k-1}$ in place of $S^{n-k}\times D^k$, and
there is a spin$^c$ line bundle $\cL$ on the
trace of the surgery---a bordism from $M$ to $M'$---restricting
to $L$ on $M$, and the bundle $L'$ on $M'$ is the restriction of $\cL$. 
\label{def:spincsurg} 
\end{definition}
The following theorem deals with ``twisted scalar curvature''
on a spin$^c$ manifold, the twisting coming from the curvature of the
spin$^c$ line bundle, as advertised in the Abstract of this paper.
\begin{theorem}[Spin$^c$ surgery theorem]
\label{thm:codim3spinc}
Let $M^n$ be a closed $n$-dimensional spin$^c$ manifold, with line bundle
$L$ over $M$ defining the spin$^c$ structure.  Assume that $M$ admits a
Riemannian metric $g$ and $L$ admits a hermitian bundle metric $h$
and a unitary connection such that $\frac14 \kappa_{M} + \mathcal R_L > 0$
{\lp}in the notation of the proof of \textup{Theorem \ref{thm:S1psc}}{\rp}.
Let $(M', L')$ be obtained from $(M,L)$ by spin$^c$
surgery in codimension $k\ge 3$, as in Definition
\textup{\ref{def:spincsurg}}.
Then there is a metric $g'$ on $M'$, and $L'$ admits a hermitian bundle
metric $h'$ and a unitary connection, such that
$\frac14 \kappa_{M'} + \mathcal R_{L'} > 0$.
\end{theorem}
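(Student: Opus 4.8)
The plan is to run the Gromov--Lawson surgery construction \cite{MR577131}, keeping track not of the scalar curvature alone but of the full zeroth-order term $\tfrac14\kappa_M+\mathcal R_L$ appearing in the Lichnerowicz formula for the spin$^c$ Dirac operator, and to coordinate the metric deformation with a choice of connection on $L'$. The basic topological input is this: since $(M',L')$ comes from a spin$^c$ surgery in codimension $k$, there is a line bundle $\cL$ on the trace $W$ restricting to $L$ on $M$ and to $L'$ on $M'$; the attached handle $D^{n-k+1}\times D^k\subset W$ is contractible, so $\cL$ is trivial on it, hence $L$ is trivial on the surgery sphere $S^{n-k}$ and on a tubular neighborhood of it, and $L'$ is trivial on the new handle $D^{n-k+1}\times S^{k-1}\subset M'$. (This is the analogue, for the line bundle, of the fact that a framing of the normal bundle is part of the surgery data.)

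Concretely, I would fix a small tubular neighborhood $U\cong S^{n-k}\times D^k_{r_0}$ of the surgery sphere, keep $(g,h,\nabla)$ unchanged on $M\smallsetminus U$ --- where $\tfrac14\kappa_{M'}+\mathcal R_{L'}=\tfrac14\kappa_M+\mathcal R_L>0$ by hypothesis and compactness --- and inside $U$ perform the Gromov--Lawson bending, valid because $k\ge 3$: one replaces the flat normal disks $D^k_{r_0}$ by ``torpedo''-type caps, producing a metric $g'$ on $M'$ that agrees with $g$ near $\partial U$ and whose scalar curvature at a point of the new handle where the normal sphere has radius $\rho\le r_0$ is $\gtrsim (k-1)(k-2)/\rho^2$ (the remaining terms being of lower order in $1/\rho$, once the profile is bent gently enough). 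Simultaneously I would choose, via \cite[Lemma D.13]{lawson89:_spin}, a unitary connection on $L'$ for a suitable hermitian metric $h'$ whose curvature $\omega'$ equals $F_\nabla$ on $M\smallsetminus U$ and extends $F_\nabla|_{\partial U}$ over the new handle --- legitimate since $c_1(L')$ vanishes there, so $\omega'$ can be taken exact on the handle; moreover $\omega'$ can be arranged so that, because $F_\nabla|_U$ has a fixed $g$-bound $B$ and $2$-forms rescale accordingly under the bending, $|\omega'|_{g'}\lesssim B\,(r_0/\rho)^2$ at scale $\rho$.

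The verification then splits into three regions. Near $\partial U$ the metric is $C^0$-close to $g$ and $\omega'=F_\nabla$, so the strict positivity of $\tfrac14\kappa_M+\mathcal R_L$ survives. On the new handle, at scale $\rho$, one compares $\tfrac14\kappa_{g'}\gtrsim (k-1)(k-2)/(4\rho^2)$ with the contribution of $\mathcal R_{L'}$, whose operator norm is $\lesssim|\omega'|_{g'}\lesssim B\,(r_0/\rho)^2$: the two are of the same order in $\rho$, with a ratio $\gtrsim (k-1)(k-2)/(B\,r_0^2)$, which exceeds the threshold needed once $r_0$ is chosen small enough (note $B\to\sup_{S^{n-k}}|F_\nabla|_g$, a finite number, as $r_0\to 0$). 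The intermediate ``bent'' region is handled by the same comparison. Assembling the pieces yields $(g',h',\nabla')$ with $\tfrac14\kappa_{M'}+\mathcal R_{L'}>0$.

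The step I expect to be the main obstacle is precisely this coordination: the Gromov--Lawson bending shrinks the normal directions, which inflates the $g'$-norm of any $2$-form with a component along them, so a priori $\mathcal R_{L'}$ could swamp the gain in scalar curvature. The resolution combines (i) the triviality of $L'$ on the new handle, which gives enough freedom to choose the connection there, and (ii) the observation that both $\kappa_{g'}$ and $|\omega'|_{g'}$ blow up at the same rate $1/\rho^2$ as the normal spheres shrink, with a ratio that can be tipped in our favor by taking the surgery tube $U$ thin enough. Morally this is an instance of the philosophy behind Hoelzel's surgery theorem \cite{MR3498902}: the pointwise condition ``$\tfrac14\kappa+\mathcal R_L>0$'' on the pair (curvature operator of $M$, curvature $2$-form of $L$) is open, $O(n)$-invariant, and stable under Riemannian products with a highly curved $S^{k-1}$ for $k\ge 3$, and we are extending Hoelzel's and Gromov--Lawson's techniques so as to keep the line bundle in play.
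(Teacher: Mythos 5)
Your proposal takes essentially the same route as the paper: run the Gromov--Lawson bending while tracking the full Lichnerowicz term $\frac14\kappa+\mathcal R_L$, use extendability of $\cL$ over the trace of the surgery (equivalently, contractibility of the attached handle) to conclude that $L$ is trivial on a neighborhood of the surgery sphere, and use that freedom to make the connection (essentially) flat over the new handle so that the scalar-curvature gain dominates. The paper's bookkeeping is slightly cleaner at one point: it keeps the pulled-back connection unchanged throughout the bend --- under which $\frac14\kappa+\mathcal R_L$ is preserved pointwise because the projection from the bent hypersurface back to $M$ is distance-nonincreasing, so $|F_L|$ does \emph{not} inflate as you feared --- and only afterwards kills $\mathcal R_L$ outright by an exact change of $\omega$ in a still smaller neighborhood where the metric is already a product with a tiny $S^{k-1}$; this sidesteps your rate comparison in the intermediate bent region, where the scalar curvature is only close to $\kappa_M$ rather than of order $1/\rho^2$, so that comparison as stated would not close.
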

This leads to the following bordism theorem, which was needed in the proof
of Theorem \ref{thm:S1psc}.
\begin{theorem}[Spin$^c$ bordism theorem]
\label{thm:bordismspinc}
Let $M^n$ be a connected closed $n$-dimensional spin$^c$ manifold
\textbf{which is not spin}, with line bundle
$L$ over $M$ defining the spin$^c$ structure.  Assume that $M$ is simply
connected and that $n\ge 5$.  Also assume that there exists
a pair $(M',L')$ in the same bordism class in $\Omega^{\spinc}_n$
with a metric $g'$ on $M'$ and a hermitian metric $h'$ and
a unitary connection on $L'$ such that
$\frac14 \kappa_{M'} + \mathcal R_{L'} > 0$.  Then
$M$ admits a Riemannian metric
$g$ and $L$ admits a hermitian bundle metric $h$
and a unitary connection such that $\frac14 \kappa_{M} + \mathcal R_L > 0$.
\end{theorem}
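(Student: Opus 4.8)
The plan is to deduce this bordism statement from the surgery theorem, Theorem~\ref{thm:codim3spinc}, by the familiar device that in the simply connected range a bordism can be replaced by a chain of surgeries of codimension $\ge 3$ — here carried out in the spin$^c$ category. Fix a spin$^c$ bordism $(W^{n+1},\mathcal L)$ from $(M,L)$ to $(M',L')$, so $\mathcal L$ is a complex line bundle on $W$ with $c_1(\mathcal L)\equiv w_2(W)\bmod 2$, restricting to $L$ on $M$ and $L'$ on $M'$. Every handle is contractible, so $\mathcal L$ restricts to a trivial bundle on it; hence any surgery read off from a handle decomposition of $W$ is automatically a spin$^c$ surgery in the sense of Definition~\ref{def:spincsurg} (its attaching sphere has trivial normal bundle and $\mathcal L$ extends over the trace). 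The target is a handle decomposition of $W$ relative to $M$ using only handles of index $k$ with $3\le k\le n-1$. Turning the bordism upside down, the handle of index $k$ becomes a handle of index $(n+1)-k\in\{2,\dots,n-2\}$ relative to $M'$, whose attachment to an $n$-manifold is a surgery of codimension $n-((n+1)-k)+1=k\ge 3$; so $M$ is obtained from $M'$ by a finite chain of spin$^c$ surgeries of codimension $\ge 3$. Applying Theorem~\ref{thm:codim3spinc} once for each surgery, starting from the metric $g'$ and the hermitian metric and unitary connection on $L'$ with $\tfrac14\kappa_{M'}+\mathcal R_{L'}>0$, then yields the asserted $g$, $h$, and connection on $(M,L)$.

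To produce such a decomposition I would first perform spin$^c$ surgeries in the interior of $W$ on embedded $0$- and $1$-spheres — these have trivial normal bundle, and $\mathcal L$ extends over the traces since $H^2$ of a neighbourhood $S^1\times D^n$ vanishes — to make $W$ connected and simply connected; this uses $\pi_1(M)=\pi_1(M')=0$ and $\dim W\ge 6$. A Morse function on $W$ rel $M$ may then be chosen with no critical points of index $0$ or $1$ (trading $1$-handles for $3$-handles) and, dually, using connectedness and simple connectivity of $M'$, with no critical points of index $n$ or $n+1$, so $W$ rel $M$ has handles only of index $2,\dots,n-1$. The remaining task is to remove the index-$2$ handles, i.e.\ to kill $\pi_2(W,M)\cong H_2(W,M)$ by further interior surgery. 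As $H_1(M)=0$, the map $H_2(W)\to H_2(W,M)$ is onto, so every class of $H_2(W,M)$ lifts to a class of $H_2(W)$, which because $\dim W\ge 6$ is carried by an embedded $2$-sphere $\sigma\subset W\setminus\partial W$; surgery on $\sigma$ does not touch $\partial W$, and provided $\sigma$ has trivial normal bundle and $c_1(\mathcal L)|_\sigma=0$ the spin$^c$ structure extends across it. Performing this over a generating set of $H_2(W,M)$ kills it, and delivers the handle decomposition sought above.

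The step I expect to be the main obstacle is precisely this last one: the $2$-spheres that arise need not have trivial normal bundle (this bundle is trivial exactly when $\langle w_2(W),[\sigma]\rangle=\langle c_1(\mathcal L),[\sigma]\rangle$ is even), and even then $\mathcal L$ need not restrict trivially to $\sigma$. Here the hypothesis that $M$ is \emph{not} spin is essential. Since $w_2(M)=c_1(L)\bmod 2\neq 0$ and $M$ is simply connected, $w_2(M)$ is carried by an embedded $2$-sphere in $M$; forming the ambient connected sum of a lift $\sigma$ with suitable $2$-spheres pushed off $M$ into the interior of $W$ changes neither the class it kills in $H_2(W,M)$ nor the surgery it effects there, but — because $\langle c_1(\mathcal L),-\rangle$ restricted to $\iota_*H_2(M;\bZ)$ is $\langle c_1(L),-\rangle$ — it lets one correct the parity of $\langle c_1(\mathcal L),[\sigma]\rangle$, and then, after possibly modifying $\mathcal L$ within its spin$^c$ structure and passing to a more convenient bordism $W$, arrange that $c_1(\mathcal L)$ vanishes on an entire generating set of such spheres. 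Making this correction work uniformly is the delicate bookkeeping; once it is in hand, the argument above reduces everything to Theorem~\ref{thm:codim3spinc}.
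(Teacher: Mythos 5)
Your overall strategy is the same as the paper's: make $W$ and $M'$ simply connected, reduce to a handle decomposition of $W$ rel $M$ with handles of index $\ge 3$ (equivalently, make $M\hookrightarrow W$ a $2$-equivalence), and then apply Theorem~\ref{thm:codim3spinc} handle by handle. The gap is exactly where you flag it: you never actually kill $H_2(W,M)$. You describe a class-by-class ``parity correction'' by connect-summing lifts with spheres from $M$, and then defer the conclusion to unspecified ``delicate bookkeeping.'' As written, the decisive step is not proved.

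The paper closes this step with a short argument that makes the case-by-case correction unnecessary. Instead of arranging that each lift of a generator of $H_2(W,M)$ individually has trivial normal bundle, one kills the subgroup $\ker w_2(W)\subseteq\pi_2(W)$ by interior surgeries: every class there is represented by an embedded $2$-sphere with trivial normal bundle since $\dim W=n+1\ge 6$. These surgeries reduce $H_2(W)$ to (at most) $\bZ/2$, detected by $w_2(W)$. Now the non-spin hypothesis enters: since $w_2(M)=\iota^*w_2(W)\ne 0$ and $M$ is simply connected, some class of $H_2(M)$ maps onto the nonzero element of $H_2(W)$, so $H_2(M)\to H_2(W)$ is surjective; together with $H_1(M)=0$ this forces $H_2(W,M)=0$, and handle cancellation then yields the decomposition you want. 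This is precisely the ``uniform'' version of your observation that any lift $\sigma$ can be corrected by a sphere coming from $M$ without changing its image in $H_2(W,M)$ --- once $\ker w_2(W)$ is killed, there is nothing left to correct. (Your secondary worry, that $c_1(\cL)$ may be nonzero, though even, on a sphere in $\ker w_2(W)$, is a legitimate technical point which the paper's written proof also does not address explicitly; it is separate from the $w_2$/normal-bundle issue, which is where the hypothesis that $M$ is not spin is actually used.)
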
  
\begin{proof}[Proof of \textup{Theorem \ref{thm:bordismspinc}} assuming
    \textup{Theorem \ref{thm:codim3spinc}}]
The following argument is adapted from \cite{MR577131}.
First of all, by Theorem \ref{thm:codim3spinc} we can kill off $\pi_0$
and $\pi_1$ of $M'$ and assume that $M'$ is simply connected.
Let $W$ be a spin$^c$ manifold with spin$^c$ line bundle $\cL$ and
with $\partial W = M \amalg -M'$, $\cL|_{M'}=L'$, $\cL|_{M}=L$.
Doing spin$^c$ surgery on $W$ if necessary, we can also assume
that $W$ is simply connected.  In order for $M$ to be obtainable
from $M'$ by spin$^c$ surgery in codimension $\ge 3$, we need the
inclusion map $M\hookrightarrow W$ to be a $2$-equivalence, i.e.,
to induce an isomorphism on $\pi_0$ and $\pi_1$ and a surjection on $\pi_2$.
The $\pi_0$ and $\pi_1$ conditions certainly hold since $M$ and $W$ are
simply connected. By the Hurewicz Theorem, we may identify $\pi_2(M)$
and $\pi_2(W)$ with $H_2(M)$ and $H_2(W)$, respectively, and thus
we need to be able to kill $H_2(W, M)$ by surgery.
Now since $n\ge 5$, $\pi_2(W)$ and $\pi_2(M)$ are
represented by smoothly embedded $2$-spheres.  The normal bundles of
these $2$-spheres are determined by the second Stiefel-Whitney
class $w_2$, which we can view by the Hurewicz Theorem and Universal
Coefficient Theorem as a map from $\pi_2$  to $\bZ/2$, and the
spin$^c$ condition says that $c_1(\cL)$ reduces mod $2$ to $w_2(W)$,
and similarly $c_1(L)$ reduces mod $2$ to $w_2(M)\ne 0$.  The kernel of
$w_2(W)$ in $\pi_2(W)$ is represented by smoothly embedded $2$-spheres
with trivial normal bundles, and may be killed off by surgeries
in the interior of $W$.  Doing this reduces $H_2(W)$ to $\bZ/2$ and
kills $H_2(W, M)$.  Then by the proof of the $h$-Cobordism Theorem,
using handle cancellations as in \cite{MR0190942}
or \cite[Lemma 1]{MR0189048},
$M$ can be obtained from $M'$ by spin$^c$ surgeries in codimension
$\ge 3$, and we can apply Theorem \ref{thm:codim3spinc}.
\end{proof}
\begin{proof}[Proof of \textup{Theorem \ref{thm:codim3spinc}}]
Let $N^{n-k}\hookrightarrow M^n$ be the embedded sphere on which we
are doing surgery.  Recall the proof of the surgery theorem
in \cite{MR577131} or in \cite{MR3498902}: we ``bend'' the metric
$g$ on a neighborhood $N^{n-k}\times D^k$ of $N$, preserving the
property of {\psc}, so that close to
$N$ the metric looks like the standard metric on
$S^{n-k}(r_1)\times S^{k-1}(r_2)\times [0, 1]$, for suitable $r_1,r_2>0$.
Then it is clearly possible to glue in a handle isometrically,
and since $k-1\ge 2$ and thus the second factor has {\psc}, we get
{\psc} on the new manifold $M'$.  In the course of this process,
up to an error which can be taken arbitrarily small, the
scalar curvature \emph{only goes up} from $\kappa_M$ to the scalar curvature
of $S^{n-k}(r_1)\times S^{k-1}(r_2)$, and thus during the bend, the
property that $\frac14 \kappa_{M} + \mathcal R_L > 0$ is preserved.
(The estimates checking this are rather involved, but are explained in
great detail in \cite[\S\S2.3--2.6]{MR2789750}.)  Now we
can proceed as follows: once the bend is completed, in a still smaller
neighborhood of $N$, we can change the curvature $2$-form $\omega$ of $L$
by any exact form, and we can choose this exact form to kill off
$\mathcal{R}_L$ altogether in this small neighborhood, except in
the case where $n-k=2$ and $\langle c_1(L), [N]\rangle$ is nonzero.
Fortunately this case can never arise, because if $N=S^2$
and $L|_N$ is topologically nontrivial, then since the trace of the
surgery $W^{n+1}$ is obtained by attaching a $3$-handle to kill $[N]$,
the boundary map $H^2(M)\to H^3(W, M)$ has to kill $c_1(L)$, which
shows that $L$ cannot be the restriction of a line bundle over $W$.

Thus we can assume $\mathcal{R}_L$ vanishes in a small neighborhood of
the sphere on which we are doing the surgery, and then gluing in a
standard handle preserves the curvature condition.
\end{proof}
\begin{remark}
Note the careful way in which Theorem \ref{thm:bordismspinc} is written.
One should \emph{not} assume that if $(M,L)$ is a spin$^c$ and
$\alpha^c(M,L)=0$, then one can choose a metric on $M$ and a hermitian
metric and connection on $L$ so that $\frac14 \kappa_{M} + \mathcal R_L > 0$,
for this is false.  Indeed, suppose $M$ is actually spin and $L$
is the trivial bundle, so that $\alpha^c(M,L)=0$ means that
$\widehat A(M)=0$.  This is not enough for $M$ to admit {\psc},
even with $\dim M>4$ and $M$ simply connected,
because if $M$ has dimension $1$ or $2$ mod $8$, then $\alpha(M)$ might
be a non-zero $2$-torsion class.  Adding in the term $\mathcal R_L$
in this case only makes things worse, because in suitable
coordinates, $\mathcal R_L$ has the form
$\begin{pmatrix} \omega &0\\0&-\omega\end{pmatrix}$,
where the operator $\omega$ is constructed from the curvature of $L$,
which can be any exact $2$-form on $M$, so  $\frac14 \kappa_{M} + \mathcal R_L$
cannot be strictly positive in this case unless $\kappa_{M}$ is
strictly positive, which is impossible.
\end{remark}  

\section{$\bC\bP^2$ bundles}
\label{sec:CP2}

In this section we prove some results about spin$^c$ bordism and
$\bC\bP^2$ bundles needed for the proof of Theorem \ref{thm:S1psc}.
These might be of independent interest.
(Indeed, they were hinted at in \cite[p.\ 235]{MR1251581}, but the work
of R.\ Jung alluded to there never appeared.)  We will make
extensive use of some calculations of F{\"u}hring \cite{Fuhring}.

First we recall some basic facts about
spin$^c$ bordism.  The basic references are \cite[Chapter XI]{MR0248858},
\cite[\S8]{MR0234475}, and \cite{MR1166518}.  There is an
equivalence $\MSpinc\simeq \Sigma^{-2}\MSpin\wedge \bC\bP^\infty$, corresponding
to the isomorphism of bordism groups
$\Omega^{\spinc}_n\cong
\widetilde\Omega^{\spin}_{n+2}(\bC\bP^\infty)$;
see the proof of Lemma \ref{lem:S1cofib}. Furthermore, classes
in Spin$^c$ bordism are detected by their Stiefel Whitney numbers
(which are constrained just by the Wu relations and the vanishing of
$w_1$ and $w_3$) and integral cohomology characteristic numbers
(where in addition to the Pontryagin classes, one can use powers of $c_1$
of the line bundle defining the spin$^c$ structure)
\cite[Theorem, p.\ 337]{MR0248858}. \emph{Note that
the bordism class can change}, depending on the choice of spin$^c$
structure. Thus, for example, $\Omega^{\spinc}_2\cong \bZ$, with all classes
represented by $(\bC\bP^1, L)$, $L$ a complex line bundle with $c_1(L)$
even, and the isomorphism to $\bZ$ is given by
$(\bC\bP^1, L)\mapsto \langle c_1(L),\,[\bC\bP^1]\rangle/2$.
Similarly, $\Omega^{\spinc}_4\cong \bZ^2$, with one generator
given by $(\bC\bP^1, \cO(2))^2$, on which
$\alpha^c$ takes the value $1$, and the other generator given
by $(\bC\bP^2, \cO(1))$, where $c_1$ of the anticanonical
bundle $\cO(1)$ is the standard generator $x$ of $H^2(\bC\bP^2;\bZ)$, 
on which $\alpha^c$ takes the value $0$.  The calculation of $\alpha^c$
on this generator is worked out by Hattori \cite{MR508087}
(we thank the referee for this reference):
\[
\begin{aligned}
\alpha^c(\bC\bP^2,\cO(1))&=\ind\Dirac_{\bC\bP^2,\cO(1)} \\
&= \langle \widehat {\mathcal A}(\bC\bP^2)e^{x/2}, [\bC\bP^2]\rangle\\
&= \big\langle (1-\tfrac{1}{8}x^2)
(1 + \tfrac{1}{2}x + \tfrac{1}{2}\tfrac{x^2}{4}),
[\bC\bP^2]\big\rangle = 0,
\end{aligned}
\]
by the Atiyah-Singer Theorem \cite[Theorem D.15, p.\ 399]{lawson89:_spin}.

This last example turns out to be crucial, because there is a sense
in which $\bC\bP^2$ with the bundle $\cO(1)$, the dual of 
the tautological  bundle, generates the kernel of $\alpha^c$.
In fact $\bC\bP^2$ plays the same role for $\alpha^c$ that $\bH\bP^2$
plays for $\alpha$ according to the work of Stolz.

We now use $(\bC\bP^2,\cO(1))$ to construct a transfer map
$T_{\MSpinc}\co \Omega^{\spinc}_n(BG)\to \Omega^{\spinc}_{n+4}$, where
$G$ is the Lie group $\SU(3)$, as follows.  The group $\SU(3)$ acts
transitively on $\bC\bP^2\cong G/H$, where $H=S(\U(2)\times \U(1))$,
preserving the class of the bundle $\cO(1)$.
Thus given a spin$^c$ manifold $(M^n, L)$ and a map $f\co M\to BG$,
we can form the associated $\bC\bP^2$ bundle $M\times_f \bC\bP^2$,
by pulling back (under $f$) the $G/H$-bundle over $BG$ associated to the
universal principal $G$-bundle.  This manifold
has dimension $n+4$ and has a spin$^c$ structure inherited from
the spin$^c$ structure on $M$ defined by $L$ and the spin$^c$ structure
on $\bC\bP^2$ defined by $\cO(1)$.

We will also need another transfer map introduced in \cite{MR1189863}
and \cite{MR1259520}.  This is defined similarly, but with
$G=\SU(3)$ replaced by $\PSp(3)$, $H=S(\U(2)\times \U(1))$ replaced
by $P(\Sp(2)\times \Sp(1))$, and $\bC\bP^2$ replaced by $\bH\bP^2$.
One obtains a transfer map
$T_{\MSpin}\co \Omega^{\spin}_n(B\PSp(3))\to \Omega^{\spin}_{n+8}$
which can be extended to a similar map on any $\MSpin$-module spectrum.
We will apply it to $\MSpinc\simeq \MSpin\wedge \Sigma^{-2}\bC\bP^\infty$.

We're now ready for one of
the main theorems of this paper.

\begin{theorem}
\label{thm:transfer}  
The transfer maps $T_{\MSpinc}\co \Omega^{\spinc}_n(BG)\to \Omega^{\spinc}_{n+4}$
and $T_{\MSpin}\co \Omega^{\spinc}_n(\PSp(3))\to \Omega^{\spinc}_{n+8}$
defined above have images lying in the kernel
of $\alpha^c\co \Omega^{\spinc}_*\to KU_*$.
The kernel of $\alpha^c$
is additively generated by the images of $T_{\MSpinc}$ and of $T_{\MSpin}$.
\end{theorem}
\begin{proof}
First of all let's observe that the image of the transfer $T_{\MSpinc}$ is
contained in the kernel of $\alpha^c$.  Consider a bundle
$\bC\bP^2\to M\xrightarrow{p} N$ in the image of the transfer.  Its
tangent bundle is the direct sum of the tangent bundle along with fibers
with $p^*TN$.  By Atiyah-Singer, the value of $\alpha^c$ on $M$
is $\big\langle \widehat{\cA}(M)e^{c_1(L_M)}/2, [M]\big\rangle$.
The line bundle $L_M$ is the tensor product of $\cO(1)$ on the fibers
$\bC\bP^2$ with $p^*L_N$, while $[M]$ is locally $[\bC\bP^2]\times [N]$
and $\widehat{\cA}(M)$ splits as a product of $\widehat{\cA}(\bC\bP^2)$
with $p^*\widehat{\cA}(N)$.  So the vanishing follows from the
vanishing of the index for $\bC\bP^2$, as computed above.

The argument for the image of the transfer $T_{\MSpin}$ is similar.
If a bundle $\bH\bP^2\to M\xrightarrow{p} N$ is in the image of the
transfer, then since $\bH\bP^2$ is $3$-connected, the spin$^c$ line bundle
$L_M$ on $M$ is just the pull-back $p^*L_N$ of the spin$^c$ line bundle
$L_N$ on $N$.  Since $\widehat A(\bH\bP^2)=0$, we again see from
Atiyah-Singer that $\alpha^c(M)=0$.

Now we have to prove surjectivity.  Since $T_{\MSpinc}$ is the induced map on
homotopy groups of a map of spectra (which by abuse of notation we denote
by the same symbol) $T\co\MSpinc\wedge \Sigma^4BG_+\to \MSpinc$
(see \cite{Fuhring} for a very similar situation),
we can prove the desired result by localizing separately at and away
from the prime $2$.  Away from the prime $2$, $\MSpinc$
is equivalent to $\MSO\wedge \bC\bP^\infty_+$ (see \cite[p.\ 352]{MR0248858}),
so the result follows immediately from the analogous statement
for the $\bC\bP^2$ transfer on $\MSO$, which is \cite[Theorem 1.4]{Fuhring}.

So we are reduced to a $2$-local calculation.  Since $\MSpinc$ is
an $\MSpin$-module spectrum, we can apply one of the main results
(Theorem B) of \cite{MR1259520}.  This asserts that we have an
additive splitting of $\MSpinc\simeq \MSpin\wedge \Sigma^{-2}\bC\bP^\infty$
into the image of $T_{\MSpin}$ and $\ko\wedge \Sigma^{-2}\bC\bP^\infty$
after localizing at $2$.  Thus to prove our theorem we only need to
show that the kernel of $\alpha^c$ on
$\ko\wedge \Sigma^{-2}\bC\bP^\infty$ is in the image of
$T_{\MSpinc}$.

Now by \cite[Theorem 2.1]{MR3120631},
\begin{equation}
\label{eq:kowedgeCP}  
\ko\wedge \Sigma^{-2}\bC\bP^\infty\simeq \bigvee_{k=0}^\infty \Sigma^{4k}\ku.
\end{equation}
(The summands on the right are some of the suspensions
$\Sigma^{4n(J)}\ku$ of $\ku$, indexed by partitions $J$
of size $n(J)$, that appear in the Anderson-Brown-Peterson additive splitting
of $\MSpinc$ localized at $2$.  This splitting is implicit in
\cite{MR0219077} and written out in detail in \cite[Ch.\ XI]{MR0248858},
\cite[\S8]{MR0234475}, and in \cite{MR1166518}.  The basic result is
that, localized at $2$, $\MSpinc$ splits additively as a direct sum
of suspensions $\Sigma^{4n(J)}\ku$ of $\ku$, indexed by partitions $J$
of size $n(J)$, together with some suspensions of $H\bZ/2$ that start in
fairly high dimension.  The lowest-dimensional summand is $\ku$ itself,
coming from the empty partition $\emptyset$ with $n(\emptyset)=0$,
and $\alpha^c$ is just projection onto this bottom summand.)

For what we will do next, we need to explain where the splitting
\eqref{eq:kowedgeCP} comes from.  It is well known (see
\cite[Corollary 5.5]{MR1189863}, \cite[Proposition 2.3]{MR1259520},
\cite[Theorem 1.6.1]{Bruner}, and
\cite[Theorems 3.1.17 and 3.1.25]{MR860042})
that $H^*\MSpin$, $H^*\ko$, $H^*\ku$, and
$H^*$ of $\MSpin$-module spectra which are bounded below and
locally finite are all, as modules over the mod-$2$ Steenrod algebra,
extended modules induced from the finite-dimensional subalgebra
$A(1)$ generated by $1=\Sq^0$, $\Sq^1$ and $\Sq^2$.  The Adams
spectral sequences of all these spectra collapse, and so the
splitting \eqref{eq:kowedgeCP} comes from the splitting of
$\Sigma^{-2}\bC\bP^\infty$ as an $A(1)$-module into
$\bigvee_{k=0}^\infty \Sigma^{4k}C$, where $C$ is the very simple
$2$-dimensional $A(1)$-module symbolized by the diagram
$\xymatrix@C-2ex{\bullet_0\ar@/^/[rr] && \bullet_2}$, where the
curved arrow represents the action of $\Sq^2$.  And it is
classical (due to Wood and Fujii) that $\ko\wedge C\simeq \ku$.

So in order to finish the proof, it suffices to consider the much
simpler transfer map
\begin{equation}
\label{eq:simpletransfer}
T_{\ku}\co \bigvee_{k=0}^\infty \Sigma^{4k}\ku\wedge \Sigma^4BG_+\to
\bigvee_{k=0}^\infty \Sigma^{4k}\ku.
\end{equation}
In \eqref{eq:simpletransfer}, the transfer $T_{\ku}$
acts the same way on all the summands on the left:
the action on each summand is just the fourfold suspension
of the action on the previous one.
The spin$^c$ $\alpha$-invariant map $\alpha^c$
just projects the sum on the right of \eqref{eq:simpletransfer} onto the
bottom summand $\ku$, so it will suffice to show that
$T_{\ku}$ sends $\Sigma^{4k}\ku\wedge \Sigma^4BG_+$ onto
$\Sigma^{4k+4}\ku$ on the right in \eqref{eq:simpletransfer},
or by suspension invariance, that it sends $\ku\wedge \Sigma^4BG_+$
onto the summand $\Sigma^4\ku$ on the right in \eqref{eq:simpletransfer}.
But this is a simple characteristic class calculation.
$T_{\ku}$ sends $1\in ku_0$, represented by a point, to the
class of $(\bC\bP^2,\cO(1))$, which is in the kernel of $\alpha^c$.
It thus projects to $0$ in the bottom copy of $\pi_4(\ku)$.
We need to show that the class of $(\bC\bP^2,\cO(1))$ is, however,
a generator of $\pi_4(\Sigma^4\ku)$. Recall that the right-hand side
of \eqref{eq:simpletransfer} is identified with
$\ko\wedge \Sigma^{-2}\bC\bP^\infty$.  Since $\MSpin$ is identical
to $\ko$ up to dimension $7$, $\pi_4(\ko\wedge \Sigma^{-2}\bC\bP^\infty)$
is identified with
$\widetilde{\Omega}^\spin_6(\bC\bP^\infty)\cong \widetilde{\Omega}^{\spinc}_4$,
which has rank $2$, with the two summands generated by
$(\bC\bP^1,\cO(1))^2$ and by $(\bC\bP^2,\cO(1))$.  (The latter is
obtained by dualizing the line bundle $\cO(1)$ on the spin
manifold $\bC\bP^3$, which generates the summand
$H_6(\bC\bP^\infty, {\Omega}^\spin_0)$ in
$\widetilde{\Omega}^\spin_6(\bC\bP^\infty)$.)
Similarly, $ku_{2n}$ is generated geometrically by
the class $(\bC\bP^1, \cO(2))^n$ (which has Todd class $1$), and this
will go to the class of $(\bC\bP^2,\cO(1))\times (\bC\bP^1, \cO(2))^n$,
a generator of $\pi_{4n}(\Sigma^4\ku)$.  This completes the proof.
\end{proof}
\begin{corollary}
\label{cor:nonspinpsc}
Let $(M^n,L)$ be a simply connected spin$^c$ manifold, $L$ the
line bundle over $M$ defining the spin$^c$ structure,
with $\alpha^c([M,L])=0$.
Then after changing $(M,L)$ up to spin$^c$ cobordism, we can assume that
$M$ admits a Riemannian metric $g$ of {\psc} and
the line bundle $L$ over $M$ defining the spin$^c$ structure
admits a hermitian metric $h$ with
$\frac14 \kappa_g + \cR_h > 0$.
\end{corollary}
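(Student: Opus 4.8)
The plan is to deduce this from Theorem~\ref{thm:CP2}. Since $\alpha([M])=0$, the class $[M]$ lies in $\ker\bigl(\alpha^c\co\Omega^{\spinc}_n\to KU_n\bigr)$, so by Theorem~\ref{thm:CP2} it is a $\bZ$-linear combination of classes of two kinds of manifolds: (i)~total spaces of $\bH\bP^2$-bundles with structure group $\PSp(3)$, which are spin and have \emph{trivial} spin$^c$ line bundle; and (ii)~total spaces $E$ of $\bC\bP^2$-bundles with structure group $\PSU(3)$ over spin$^c$ bases $B$, carrying a spin$^c$ line bundle $L$ that restricts on each fiber to the distinguished power of the tautological bundle, namely $\mathcal O_{\bC\bP^2}(\pm1)$, for which $\alpha^c$ vanishes on the fiber. (This is precisely the point flagged in the footnote of Section~\ref{sec:CP2}: the spin$^c$ structure $\mathcal O_{\bC\bP^2}(\pm3)$ induced by the complex structure has $\alpha^c$ equal to the Todd genus $1$, and the argument below would fail for it.) Because the statement only asks us to change $M$ within its spin$^c$ bordism class, I would first replace $M$ by a disjoint union of such building blocks; each $-[G_i]$ is again the class of a bundle of the same type (by Stolz's transfer, resp.\ by F\"uhring's construction \cite{Fuhring}), and the curvature quantities below are insensitive to orientation, so there is no loss in doing so. It then suffices to produce, on each building block, a metric $g$ together with a hermitian metric and unitary connection on its spin$^c$ line bundle $L$ such that $\tfrac14\kappa_g+\cR_L>0$; the corollary then follows, since $\operatorname{tr}\cR_L=0$ pointwise, so this inequality automatically forces $\kappa_g>0$ as well.

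For an $\bH\bP^2$-bundle this is immediate: by Stolz \cite{MR1189863} (as used already in the proof of Theorem~\ref{thm:S1psc}) the total space carries a metric $g$ of {\psc}, and with $L$ trivial one has $\cR_L\equiv0$, so $\tfrac14\kappa_g+\cR_L=\tfrac14\kappa_g>0$. For a $\bC\bP^2$-bundle $\pi\co E\to B$ the plan is to argue exactly as in the $\bC\bP^1$-bundle estimate in the proof of Theorem~\ref{thm:S1psc} together with Corollary~\ref{cor:pscconical}. First I would settle the model fiber: on $\bC\bP^2$ with the Fubini--Study metric $g_{FS}$ and $L_0=\mathcal O_{\bC\bP^2}(\pm1)$ equipped with the connection whose curvature is the harmonic (Fubini--Study Kähler) form, the endomorphism $\tfrac14\kappa_{FS}+\cR_{L_0}$ of the spin$^c$ spinor bundle is $\SU(3)$-invariant, hence of constant coefficients, so it is enough to verify positive-definiteness on the $4$-dimensional spinor space at one point; a direct linear-algebra computation shows that the least eigenvalue of $\cR_{\mathcal O(m)}$ equals $-\tfrac{|m|}{12}\kappa_{FS}$ (one third of the value $-\tfrac14\kappa_{FS}$ attained for the complex-structure bundle $|m|=3$), so for $|m|=1$ one gets $\tfrac14\kappa_{FS}+\cR_{L_0}\ge\tfrac16\kappa_{FS}>0$. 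Then I would pass to the total space by shrinking the fibers: give $E$ a Riemannian submersion metric whose fibers are totally geodesic copies of $(\bC\bP^2,\varepsilon^2 g_{FS})$, with horizontal distribution coming from a fixed connection on the bundle, and take on $L$ the unitary connection restricting fiberwise to the one above. By the O'Neill formulas \cite{MR0200865}, as $\varepsilon\to0$ one has $\kappa_E=\varepsilon^{-2}\kappa_{FS}+O(1)$ and $\cR_L=\varepsilon^{-2}\cR_{L_0}+O(\varepsilon^{-1})$ (the $\cR_{L_0}$-term coming from the vertical--vertical part of the curvature of $L$), whence $\tfrac14\kappa_E+\cR_L=\varepsilon^{-2}\bigl(\tfrac14\kappa_{FS}+\cR_{L_0}\bigr)+O(\varepsilon^{-1})>0$ once $\varepsilon$ is small enough --- and this holds regardless of the (fixed, hence $O(1)$) geometry of $B$, which itself need not admit {\psc}.

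The main obstacle is the model-fiber computation, and specifically the necessity of using the ``correct'' spin$^c$ structure $\mathcal O_{\bC\bP^2}(\pm1)$ rather than the complex one: for the latter the Bochner term $\tfrac14\kappa+\cR_L$ is only $\ge0$, with a one-dimensional kernel (the parallel spinor of the canonical spin$^c$ structure on the Kähler manifold $\bC\bP^2$), in agreement with the nonvanishing of $\alpha^c$, so the construction genuinely breaks down there. Subordinate to this is the bookkeeping in the shrinking-fiber estimate, but this is routine in the manner of \cite{MR577131}: once the fiber inequality is strict, the fiber scalar curvature blows up like $\varepsilon^{-2}$ and dominates the base curvature, the O'Neill $A$-tensor (which is $O(\varepsilon^2)$ for a fixed connection since the fibers are totally geodesic), and the non-fiber components of the curvature of $L$.
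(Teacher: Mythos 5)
Your argument is correct and is exactly the route the paper intends: the paper states the corollary without a written proof, as an immediate consequence of Theorem \ref{thm:CP2} combined with the fiber-shrinking/O'Neill estimate already used for $\bC\bP^1$-bundles in the proof of Theorem \ref{thm:S1psc}. Your explicit model-fiber computation --- that $\cR_{\mathcal O(\pm1)}$ has least eigenvalue $-\tfrac{1}{12}\kappa_{FS}$, so $\tfrac14\kappa_{FS}+\cR_{L_0}\ge\tfrac16\kappa_{FS}>0$, consistent with the degenerate case $\mathcal O(\pm3)$ flagged in the paper's footnote --- supplies the one detail the paper leaves implicit.
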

\begin{proof}
As we have indicated, the idea of the proof is to reduce things to
the case of $(\bC\bP^2,\cO(1))$, so the first step is to prove the
theorem in this case.  For this case, no cobordism is necessary;
we use the Fubini-Study metric along with the usual connection on
the dual of the tautological bundle.  Then if $\omega$ is the
K\"ahler form, this is also the curvature of $\cO(1)$ and
the Ricci tensor is $6$ times the metric.  So $\frac14 \kappa$ is
$6$ while the minimal eigenvalue of $\cR$ is $-2$, so
$\frac14 \kappa_g + \cR_h \ge 6-2 = 4 > 0$.

Now we deal with the general case, using Theorem \ref{thm:transfer}.
Because of Theorem \ref{thm:bordismspinc}, we can deal separately
with the images of the two different transfer maps that appear in
Theorem \ref{thm:transfer}.  If $M$ is in the image of
$T_{\MSpin}$ and is the total space of an $\bH\bP^2$-bundle
with structure group $\PSp(3)$, we can rescale the $\bH\bP^2$ fiber
to have very small diameter and big curvature, and then the
scalar curvature of the $\bH\bP^2$ fiber will dominate everything
else.  So in that case the conclusion is obvious.  Thus we
can assume after making a spin$^c$ cobordism that $M^n$ is the
total space of a $\bC\bP^2$-bundle with $L_M$ restricting to
$\cO(1)$ on the fibers.  Once again, by choosing the metric
and connection so that on each fiber, we have a very small multiple
of the Fubini-Study metric and the curvature of the line bundle is
the K\"ahler form, the curvature of the fibers will dominate everything
else, and the conclusion follows.
\end{proof}  

%\endinput
\bibliographystyle{amsplain}

\bibliography{PSCFiberedSing}

\end{document}